\documentclass[opre,nonblindrev]{informs4}
\OneAndAHalfSpacedXII
\usepackage{makecell}
\usepackage{bm}
\usepackage{endnotes}
\usepackage{booktabs}
\usepackage{bbm}
\usepackage{scalerel}
\usepackage{setspace}
\usepackage{float}
\let\footnote=\endnote

\usepackage{hyperref}
\makeatletter
\DeclareFontShape{OT1}{ptm}{m}{scit}{<->ssub * ptm/m/sc}{}
\makeatother
\usepackage{natbib}
 \bibpunct[, ]{(}{)}{,}{a}{}{,}%
 \def\bibfont{\small}%
\TheoremsNumberedThrough     % Preferred (Theorem 1, Lemma 1, Theorem 2)
\ECRepeatTheorems

\EquationsNumberedThrough    % Default: (1), (2), \cdots
\makeatletter

\let\informs@proof\proof
\let\informs@endproof\endproof

\RenewDocumentEnvironment{proof}{o}{
  \IfValueTF{#1}
    {\informs@proof{#1}}   % Proof with custom name
    {\informs@proof{Proof}} % Default "Proof"
}{
  \informs@endproof
}

\makeatother

\usepackage{makecell, multirow}
\usepackage{tablefootnote}
\usepackage{subfigure}
\usepackage{enumitem}
\usepackage{fix-cm}
\usepackage{bbm}
\usepackage{etoolbox}  % Need this for 
\makeatletter
\newcommand{\appendixtableofcontents}{%
  \section*{Appendices}
  \@starttoc{app}%
}

\let\originaladdcontentsline\addcontentsline

\newcommand{\appendixaddtocline}[3]{%
  \originaladdcontentsline{toc}{#2}{#3}% Write to main TOC
  \ifstrequal{#2}{section}{%
    \phantomsection
    \originaladdcontentsline{app}{section}{#3}% Write to appendix TOC
  }{%
    \ifstrequal{#2}{subsection}{%
      \phantomsection
      \originaladdcontentsline{app}{subsection}{#3}%
    }{}%
  }%
}

\AtBeginEnvironment{APPENDICES}{%
  \let\addcontentsline\appendixaddtocline
}

\hypersetup{bookmarksdepth=subsection}
\makeatother
\MANUSCRIPTNO{}
\usepackage[ruled,vlined, noend,linesnumbered]{algorithm2e}

\usepackage{amsmath,amssymb,mathtools}
\usepackage{fullpage}
\usepackage{hyperref}
\usepackage{enumitem}
\usepackage{graphicx}
\usepackage{xcolor}
\usepackage{algpseudocode}
\usepackage{todonotes}

\newcommand{\ordscore}{\preceq_{\mathrm{score}}}

\newcommand{\Calls}{\mathcal{C}}                 % (i,r)
\newcommand{\CallsT}{\mathcal{C}^{\mathrm{time}}}% (i,r,t)

\newcommand{\ag}[1]{i_{#1}}      % agent of e
\newcommand{\pr}[1]{r_{#1}}      % price of e
\newcommand{\ti}[1]{t_{#1}}      % time of e (only meaningful for time-indexed)

\newcommand{\Order}{\preceq}                      % generic global order
\newcommand{\orddown}{\preceq_{\downarrow}}       % decreasing-price order
\newcommand{\Pred}[3]{\mathsf{Pred}_{#1}\!\left(#2,#3\right)}  % Pred_order(e,S)
\newcommand{\Succ}[3]{\mathsf{Succ}_{#1}\!\left(#2,#3\right)}  % Succ_order(e,S)

\newcommand{\Rev}[4]{\mathsf{Rev}_{#1}\!\left(#2;#3,#4\right)}

\newcommand{\Tmax}{T_{max}}   % upper bound on time indices
\newcommand{\SelG}{Sel_{G}}

\newcommand{\prune}{\operatorname{prune}}
\newcommand{\sel}{\operatorname{sel}}

\algnewcommand{\Input}{\item[\textbf{Input:}]}
\algnewcommand{\Output}{\item[\textbf{Output:}]}

\newcommand{\Plan}{\pi}

\newcommand{\RevV}[3]{\mathrm{Rev}_{#1}\!\left(#2;\,#3\right)} % \RevV{n}{\pi}{v}

\newcommand{\RevN}[2]{\mathrm{Rev}_{#1}\!\left(#2\right)}      % \RevN{n}{\pi}

\newcommand{\RevUnk}[1]{\mathrm{Rev}\!\left(#1\right)}         % \RevUnk{\pi}

\newcommand{\OPT}{OPT}

\begin{document}
%%%%%%%%%%%%%%%%

% Outcomment only when entries are known. Otherwise leave as is and
%   default values will be used.
%\setcounter{page}{1}
%\VOLUME{00}%
%\NO{0}%
%\MONTH{Xxxxx}% (month or a similar seasonal id)
%\YEAR{0000}% e.g., 2005
%\FIRSTPAGE{000}%
%\LASTPAGE{000}%
%\SHORTYEAR{00}% shortened year (two-digit)
%\ISSUE{0000} %
%\LONGFIRSTPAGE{0001} %
%\DOI{10.1287/xxxx.0000.0000}%

% Author's names for the running heads
% Sample depending on the number of authors;
% \RUNAUTHOR{Jones}
% \RUNAUTHOR{Jones and Wilson}
% \RUNAUTHOR{Jones, Miller, and Wilson}
% \RUNAUTHOR{Jones et al.} % for four or more authors
% Enter authors following the given pattern:
% \RUNAUTHOR{Slippery and Arinella}

% Title or shortened title suitable for running heads. Sample:
% \RUNTITLE{Bundling Information Goods of Decreasing Value}
% Enter the (shortened) title:

% Full title. Sampleo
% \TITLE{Bundling Information Goods of Decreasing Value}
% Enter the full title:
\TITLE{Sequential Pricing with Deadlines and Correlated Buyers}

\ARTICLEAUTHORS{%
\AUTHOR{Shruti Penumatsa, Rajan Udwani}
\AFF{Department of Industrial Engineering and Operations Research,
UC Berkeley, Berkeley, CA 94720,
\EMAIL{shruti\_penumatsa@berkeley.edu}, \EMAIL{rudwani@berkeley.edu}}
} % end of the block

\ABSTRACT{%
We study sequential posted pricing for selling a single item under an exogenous
deadline: a seller makes take-it-or-leave-it price offers to buyers one at
a time, stopping when some buyer accepts or the selling opportunity expires. We consider both deterministic
deadlines and random deadlines drawn from a known distribution, and allow buyer
valuations to be either independent or arbitrarily correlated. Despite its
practical relevance, this collection of deadline-constrained sequential pricing
models has received limited algorithmic study beyond special cases. We develop the first approximation algorithms for revenue maximization across
these settings.

}%

\maketitle

% Paper body
\section{Introduction}
Posted prices and auctions are the dominant paradigms for selling items and have
been extensively studied.  Yet in many high-value transactions, sales do not
occur via a single centralized mechanism: negotiation and diligence often
proceed with one buyer at a time, due to exclusivity provisions and credibility
concerns.  As a result, sellers commonly
make sequential take-it-or-leave-it offers and stop as soon as some buyer
accepts.

For a concrete example, consider the sale of a home through a pre-market process \citep{zillow_off_mls}.
Before listing on the public market, a seller (often through an agent) may
privately approach a small set of prospective buyers sequentially with
take-it-or-leave-it price offers.  If one buyer accepts, the home is sold and the
process ends; otherwise, the seller continues to the next buyer.  This private
sales phase is naturally governed by a deadline: if no agreement is reached
within a limited window, the property is listed publicly, at which point the
private sequential-offer procedure effectively terminates.

A second example comes from mobile in-app advertising with ad mediation.
When a user opens an app, a publisher has an opportunity to serve an
advertisement for that impression.  Rather than running a single centralized
auction across all demand sources, mediation platforms commonly allocate the
impression via a \emph{multi-call auction} \citep{display-ads-open-problems-talk}: they query ad networks sequentially
with reserve prices and stop as soon as some network returns a bid that clears
the reserve, or the opportunity expires because the user disengages.

A unifying feature of these settings is an exogenous deadline: the seller
cannot continue making offers indefinitely. In the housing example, the private
pre-market phase ends when the property transitions to a different sales channel,
such as a public listing.  In the advertising example, the opportunity ends when
the user’s attention window closes, for instance because the user scrolls away and
leaves the app, so there is only limited time to run the multi-call auction.
Under such a constraint, the seller’s problem is not only which prices to
offer, but also how to allocate a limited number of attempts over time to
the right buyer at the right price before the opportunity expires.  Depending on
the application, the number of offers that the seller can make may be known in advance or random.

These environments also naturally feature correlated demand.  In housing,
buyers’ willingness to pay may be shaped by shared market conditions and common
shocks.  In ad mediation, correlation may arise due to various reasons: (i) the
mediator may query the same ad network multiple times with different
reserves, creating dependence across repeated interactions with a single demand
source\footnote{Our formal model allows at most one offer to each buyer, but correlated buyer valuations capture the setting with repeated or multiple offers as a special case (see Section \ref{subsec:multi-call-def}).}; and (ii) the same advertisers may participate across multiple ad
networks, inducing dependence in bids across networks for the same impression.
These considerations motivate allowing valuations 
to be drawn from an arbitrary joint distribution across buyers.

We therefore study revenue maximization under sequential
take-it-or-leave-it offers with an exogenous deadline and buyer correlation. Our goal is to take a first step toward understanding the computational
aspects of deadline-constrained sequential sales problems.  Accordingly, we abstract away from several other interesting features of these environments, such as, strategic delay, learning unknown distributions, and dynamic effects (e.g., time-evolving correlations), and view these as promising directions for future work. Next, we give an overview of our model and our main contributions.

\subsubsection*{Informal Model}
A seller seeks to maximize expected revenue by making sequential take-it-or-leave-it price offers over time. At each step, the seller selects an offer consisting of a buyer and a posted price. The buyer accepts if their private valuation exceeds the posted price, in which case a sale occurs and the process terminates; otherwise, the seller may continue with further offers. The number of offers is limited by a deadline that caps the total number of offers. This deadline may be known in advance or random with a known distribution. The seller knows the distribution of buyer valuations, which may be correlated. 

A result of \citet{xiao2020complexity} already implies that determining an optimal solution to this problem is NP-hard, even when buyer valuations are independent and the horizon is known; in fact, the hardness holds even when each buyer’s valuation has support of size three.

 %The seller needs to decide which
%buyer to approach and what price to post at each step.
%We study optimization under uncertainty in both the buyers' valuations and the
%deadline. Buyers’ valuations are drawn from a known joint distribution that may
%exhibit arbitrary correlation across buyers, and
\subsection{Main Contributions}

 Our main contributions are polynomial-time
approximation algorithms for different regimes of buyer correlation and deadline uncertainty.

\subsubsection{Approximation guarantees.}
In the deterministic, known-horizon model, we obtain a $(1-1/e)$-approximation for arbitrary correlated valuations, and show that this guarantee is tight: no polynomial-time algorithm can achieve a strictly larger approximation factor unless $\mathrm{P}=\mathrm{NP}$. The hardness follows via a reduction from weighted max-$k$-cover.

For general unknown horizons with independent valuations, we present two constant-factor algorithms: a simple deterministic $\tfrac{1}{4}$-approximation and a randomized $(\tfrac{1}{2}-\varepsilon)$-approximation whose runtime scales inversely with $\varepsilon$. For correlated valuations under arbitrary unknown horizons, we obtain an $\Theta(1/\log M)$-approximation, where $M$ is the number of buyers.
Finally, we show that stronger guarantees are possible when the horizon distribution has additional structure. In particular, we obtain a $(1-1/e)$-approximation for independent valuations under a geometric horizon distribution, and a $\tfrac{1}{4}(1-1/e)$-approximation for correlated valuations when the horizon distribution satisfies the Increasing Failure Rate (IFR) property. We summarize the resulting approximation landscape in Tables~\ref{tab:results-core} and~\ref{tab:results-special}.

\subsubsection{Technical ideas.}
We develop our approximation algorithms through two broad approaches.  The first
is to transform the problem into a constrained subset-selection problem over a
ground set of possible offers, where each offer is specified by an agent (buyer), a
price, and its position in the sequence.  The
second is a relax-and-round approach: we formulate a mathematical program that
upper-bounds the optimal expected revenue while capturing the relevant
feasibility constraints, and then round its solution while capturing a constant factor of the relaxation.

If we work directly with the set of all possible offers indexed by agent (buyer), price, and position, the resulting expected-revenue objective is a set function that, in its raw form, appears to lack useful structural properties. Our first step is therefore to impose a simple structural principle that is inherent to optimal solutions in the known-horizon setting: for any feasible set of buyer–price pairs, it is optimal to execute them in decreasing order of price. Under this canonical ordering, for the known-horizon problem with correlated valuations, the induced objective becomes monotone and submodular, and feasibility is captured by a partition matroid constraint. This immediately yields a $(1-1/e)$-approximation (Section~\ref{sec:known}). We also show that this guarantee is tight unless $\mathrm{P}=\mathrm{NP}$.

Beyond yielding the optimal approximation factor, this ordering principle reveals a unifying structure. $(1-1/e)$-approximations were previously known for several special cases of our setting, typically via distinct relax-and-round arguments (see Section~\ref{sec:known}). In contrast, the order-first reduction exposes a common diminishing-returns phenomenon under a natural canonical order, and extends cleanly to general correlated valuations in the known-horizon model.

When the horizon is random, however, there is generally no execution order that can be fixed in advance for all feasible subsets: the optimal order may depend on which offers are selected. This prevents a direct extension of the ordering-based reduction. One important exception is the memoryless geometric horizon, which captures uniform discounting of future revenue—a common alternative objective in practice. Memorylessness restores a set-independent canonical order when valuations are independent, and under this ordering the induced objective is again monotone and submodular. This allows us to extend the $(1-1/e)$ guarantee to the geometric-horizon model (Section~\ref{sec:unknown-geom}).

For general unknown horizons with independent valuations, we pursue two new directions. First, in the spirit of the approach above, we derive additional structural properties of optimal policies and use them to define a more structured objective. This objective is monotone and, while not submodular, admits a \emph{submodular order}. This property enables efficient optimization via a simple and fast directed local-search procedure, yielding a deterministic $\tfrac{1}{4}$-approximation (Section~\ref{sec:unknown-arbitrary-quarter}).

Second, we take an approach inspired by the relax-and-round paradigm, to obtain a stronger approximation. We introduce a mixed-integer linear programming (MILP) relaxation that upper-bounds the optimal expected revenue while explicitly accounting for horizon uncertainty, and show that it can be solved to within a $(1-\varepsilon)$ factor using a PTAS. We then design a randomized transformation that selectively discards offers from the resulting solution in order to produce a sequence of offers whose expected revenue is within a factor $\tfrac{1}{2}$ of the MILP optimum. This transformation relies on a contention-resolution–style argument enabled by the way the relaxation encodes horizon uncertainty and relaxes uncertainty in buyer valuations. Overall, we give a $(\tfrac{1}{2}-\varepsilon)$-approximation under arbitrary unknown horizons (Section~\ref{sec:structural-relaxation}). We also show that no algorithm can achieve a guarantee strictly better than $\tfrac{1}{2}$ when compared with the MILP formulation.

When valuations are correlated and the horizon is unknown, the additional structure exploited in the independent case no longer holds, and direct extensions of the above ideas can be shown to fail. In this regime, we return to the greedy intuition underlying the known-horizon case and investigate how much of it persists under random horizons. When the horizon distribution belongs to an Increasing Failure Rate (IFR) family, we show that there exists a representative deterministic time scale that captures a constant fraction of the optimal expected revenue, enabling a constant-factor approximation via a median-based reduction (Section~\ref{sec:IFR}). For fully arbitrary horizon distributions, we propose an algorithm that selects the best out of many fixed horizon greedy solutions and show that it is exactly $\Theta(1/\log M)$-approximate
(Section~\ref{sec:buckets}).

\begin{table}[h]
\centering
\caption{\textbf{Current approximation landscape.}
Entries marked with $\star$ are obtained in this paper. Note that, 
\citet{brubach2023onlinematchingframeworksstochastic} give a randomized $1/2$-approx.\ for unknown horizon and independent weighted Bernoulli valuations. The $(1-\varepsilon)$ for  known horizon and independent valuations is due to
\citet{segev2020efficient} and \citet{fu2018ptas},\citet{chen2016combinatorial} who give an EPTAS and PTAS respectively.}
\label{tab:results-core}
\begin{tabular}{@{}lcc@{}}
\toprule
& \textbf{Independent valuations} & \textbf{Correlated valuations} \\
\midrule
\textbf{Known horizon}
& $(1-\varepsilon)$
& $(1-1/e)^{\star\ddagger}$ \\
\textbf{Unknown horizon}
& $\tfrac14^{\star}$ (det.) and $(\tfrac12-\varepsilon)^{\star}$ (rand.)
& $\Theta(1/\log M)^{\star}$ \\
\bottomrule
\end{tabular}

\footnotesize
$^{\ddagger}$ We show that $(1-1/e)$ is optimal: no poly-time algorithm can achieve $>(1-1/e)$ unless $\mathrm{P}=\mathrm{NP}$.
\end{table}

\begin{table}[h]
\centering
\caption{\textbf{Improvements under additional horizon structure.}} %Entries marked with $\star$ are obtained in this paper.}
\label{tab:results-special}
\begin{tabular}{@{}lll@{}}
\toprule
\textbf{Horizon structure} & \textbf{Valuation structure} & \textbf{Approx.\ guarantee} \\
\midrule
Geometric (unknown) & Independent & $(1-1/e)$ \\
IFR (unknown) & Correlated & $\tfrac14(1-1/e)$ \\
\bottomrule
\end{tabular}
\end{table}

\section{Related Work}
Our work is connected to several streams across economics, computer science, and
operations research, including bargaining, sequential posted pricing
in mechanism design, and stochastic probing/selection models.

\paragraph{Bargaining}
Sequential take-it-or-leave-it offers are classically studied in one-sided-offer
bargaining models, where outcomes are characterized via equilibrium analysis under
time frictions such as uniform discounting and deadlines
\citep{stahl1972bargaining,rubinstein1982perfect,sobel1983bargaining,
fudenberg1983bargaining,fudenberg1985infinite}.
In contrast, we adopt an algorithmic perspective and study the problem of
approximately maximizing seller revenue under a range
of deadline models beyond uniform discounting and fixed deadlines.
In the single-call model, where each buyer receives at most one offer, truthful
acceptance is optimal: the buyer accepts iff her value is at least the posted
price, so the seller’s problem is purely computational.
In the multi-call model, we restrict to the same threshold-acceptance rule to isolate
the algorithmic structure.

\paragraph{Sequential Posted Pricing and Mechanism Design.}
Sequential posted-pricing mechanisms (SPMs) are a standard tool in algorithmic
mechanism design, where the goal is to approximate the revenue of the \emph{optimal
mechanism} using simple sequential posted prices under assumptions such as
independent private values
 \citep{chawla2010multi}. Related work also develops
approximation schemes for computing near-optimal SPMs in special cases, including
multi-unit settings \citep{chakraborty2010approximation}. In contrast to SPMs, we benchmark
against the \emph{best sequential pricing policy} feasible under an explicit deadline
model, rather than against the revenue of the optimal mechanism. Moreover, whereas existing SPM
guarantees are fundamentally based on independence and do not consider deadlines as a
constraint, our focus includes correlated valuations and a range of
deterministic and stochastic deadline models.

\paragraph{Stochastic Probing and Query-commit Models.}
A closely related algorithmic literature studies stochastic probing and
selection-stopping problems (\citealp{segev2020efficient,fu2018ptas,chen2016combinatorial}), as well as related hiring formulations
(\citealp{epstein2024selection,purohit2019hiring}).
In these models, a decision-maker probes elements subject to feasibility constraints
and commits upon success; this probe--commit structure is equivalent to making
sequential offers and terminating upon the first acceptance, and a hard probe budget
corresponds to a deterministic horizon in our setting. Accordingly, algorithmic
guarantees and approximation schemes from this literature yield guarantees for our
independent-values, deterministic-horizon special cases. In contrast, our setting emphasizes deadline uncertainty through random
horizons and allows arbitrary correlation in buyer valuations.

\emph{Adaptivity Gaps and Submodularity.}
\citep{gupta2017adaptivity} study stochastic probing for monotone submodular objectives under
prefix-closed constraints and show constant adaptivity-gap bounds under independence
assumptions. In our model, adaptivity is not the
relevant issue: non-adaptive sequential policies are without loss of generality
(see Remark \ref{rem:nonadaptivity-wlog}). Moreover, while our known-horizon objective admits a natural
monotone submodular structure, we show that this structure importantly breaks under unknown
(random) horizons, creating challenges outside the scope of these analyses.

\emph{Matching with Unknown Patience.}
For our \emph{unknown-horizon model with independent valuations}, the closest
structural connection is \citet{brubach2023onlinematchingframeworksstochastic}, who
study online matching with stochastic rewards and unknown patience. In their setting, a single online vertex probes incident edges sequentially; probing
an edge corresponds to making an offer in our setting, and their probe--commit rule
matches our termination upon the first acceptance. Their \emph{unknown patience}
parameter, a probe budget that is only revealed when exhausted, corresponds
directly to our random horizon. They focus on an independent stochastic-rewards model in which each edge succeeds
with probability $p$ and yields a fixed weight $w$ upon success, which is a special
case of our independent value model (weighted Bernoulli valuations). They identify
an optimal ordering policy under geometric patience and give a randomized
$1/2$-approximation for arbitrary patience distributions in this rewards model.
In contrast, we study both known and unknown-horizon structure under general independent (not necessarily weighted Bernoullis) as well as correlated
valuations. %and obtain a $(1-1/e)$-approximation under
%a geometric horizon; for arbitrary random horizons we provide a deterministic
%$1/4$-approximation and a randomized $(1/2-\epsilon)$-approximation. %Moreover, we also incorporate valuation correlation and additional deadline models.

\section{Model and Preliminaries}
\label{sec:model}

We refer to buyers as agents throughout. Each agent \(i\in I\) has a private value \(V_i\) drawn from a known distribution, and an offer
of price \(r\in\mathbb{R}_{\ge 0}\) to agent \(i\) is accepted if and only if
\(V_i \ge r\). The process stops at the first accepted offer, or when the
available number of offers (the horizon) is exhausted. Recall that we study the problem along two orthogonal modeling
axes: the dependence structure of agents' values, and the (known or random)
deadline on the number of offers.

\subsubsection*{Value models}
Along the value axis, the two main settings we consider differ in the
dependence structure of agents' values.
In the independent-values model, the values $\{V_i\}_{i\in I}$ are
mutually independent.
In the correlated-values model, the valuation profile
$V=(V_i)_{i\in I}$ is drawn from an arbitrary known joint distribution on
$\mathbb{R}_{\ge 0}^{I}$.
Although our base procedure offers at most one price to each agent, the
correlated-values model also subsumes a \emph{multi-call} variant in which the
same agent may receive multiple offers at different prices (see Section \ref{subsec:multi-call-def}). 
%Lemma~\ref{lem:multicall-special-case}).

In both value models, for each agent $i\in I$ and price $r\in\mathbb{R}_{\ge 0}$,
we denote $p_i(r) \;:=\; \Pr[V_i \ge r]$
for the marginal acceptance probability of offering price $r$ to agent $i$.

\subsubsection*{Horizon models (deadline)}
Independently of the value model, we consider two horizon models.
In the known-horizon setting, the principal may attempt at most $n$
offers, where $n$ is known a priori.
In the unknown-horizon setting, the number of available offers is a
random variable $N$ with known distribution.
In both cases, execution terminates upon the first acceptance or when the
horizon is reached.

Throughout, we assume $V$ and $N$ are mutually independent.

\subsection*{Mathematical Formulation}
From here onward, we use \emph{call} to refer to an ordered pair \(e=(i_e,r_e)\) consisting of an agent
\(i_e\in I\) and a posted price \(r_e\in\mathbb{R}_{\ge 0}\). The universe of
candidate calls is denoted by \(\Calls\). A (nonadaptive) pricing plan is a fixed sequence of calls
\[
  \Plan \;=\; (e_1,e_2,\ldots,e_m),
  \qquad e_t = (i_t,r_t)\in \Calls \;\;\text{for all } t\in[m].
\]

Consider a deterministic horizon \(n\in\mathbb{Z}_{\ge 0}\).
Let \(\mathbf{v}=(v_i)_{i\in I}\in\mathbb{R}_{\ge 0}^{I}\) denote a valuation
realization. Under plan \(\Plan\), at most the first \(\min\{m,n\}\) calls can be
attempted, and execution stops at the first acceptance. The realized revenue is
\[
  \RevV{n}{\Plan}{\mathbf{v}}
  \;=\;
  \sum_{t=1}^{\min(m,n)}
    \underbrace{r_t \cdot \mathbf{1}\{\mathbf{v}_{i_t}\ge r_t\}}_{\text{revenue if call }e_t\text{ accepts}}
    \cdot
    \underbrace{\prod_{s<t} \mathbf{1}\{\mathbf{v}_{i_s}<r_s\}}_{\text{all earlier calls reject}}.
\]
Here \(\mathbf{1}\{A\}\) denotes the indicator of event \(A\), which equals \(1\)
when \(A\) holds and equals \(0\) otherwise.

Let \(V=(V_i)_{i\in I}\) be the random valuation profile. The expected revenue at
horizon \(n\) is
\[
  \RevN{n}{\Plan}
  \;:=\;
  \mathbb{E}_V\!\bigl[\RevV{n}{\Plan}{V}\bigr].
\]
In the unknown--horizon setting, the number of available offers is a random
variable \(N\) with known distribution. The overall expected revenue of plan
\(\Plan\) is
\[
  \RevUnk{\Plan}
  \;:=\;
  \mathbb{E}_N\!\bigl[\RevN{N}{\Plan}\bigr]
  \;=\;
  \mathbb{E}_{V,N}\!\bigl[\RevV{N}{\Plan}{V}\bigr].
\]
Note that \(\RevV{n}{\Plan}{\mathbf{v}}\) is a
realized revenue, \(\RevN{n}{\Plan}\) takes expectation over \(V\) for a fixed
\(n\), and \(\RevUnk{\Plan}\) takes expectation over both \(V\) and \(N\).

We restrict attention to \emph{single-call} plans in which each agent receives at most one call.
Equivalently, a plan \(\Plan=(e_1,\ldots,e_m)\) is feasible if
$
  \bigl|\{\, t\in[m] : i_t = i \,\}\bigr| \le 1
  \text{for all } i\in I.
$
We assume the distributions of \(V\) and \(N\) are known. Our objective is to
choose a feasible pricing plan maximizing expected revenue,
\[
  \max_{\Plan}\;\RevUnk{\Plan}
  \qquad\text{s.t.}\qquad
  \bigl|\{\, t\in[m] : i_t = i \,\}\bigr| \le 1 \;\;\text{for all } i\in I.
\]
We denote the optimal expected revenue by \(OPT\), and let \(\pi^\star\) be an
optimal plan. 

We say an algorithm achieves an \(\alpha\)--approximation if it runs in
time polynomial in the encoding size of the instance (see Appendix \ref{subsec:input-representation-and-computational-model} for more details) and returns a feasible plan
\(\Plan\) satisfying
$\RevUnk{\Plan} \;\ge\; \alpha \cdot OPT.$ In the correlated-valuation setting, we assume access to a value oracle for the revenue objective. Such an oracle could be approximated in practice via Monte Carlo sampling from the joint distribution of valuations, but we ignore this approximation issue and treat the oracle as exact for simplicity. Using standard techniques from the literature, it can be shown that all of our approximation guarantees continue to hold under this approximate-oracle model.

\begin{remark}[Non-adaptivity is without loss]
\label{rem:nonadaptivity-wlog}
 When a call \(e\) is reached, the
only revealed information is that all earlier calls have failed and the horizon
has not yet expired, which is exactly the event under which \(e\) would be
attempted. Consequently, allowing the seller to adapt
future calls based on observed outcomes provides no additional power, and the
space of fixed pricing plans suffices to capture an optimal solution. %optimal sequence of calls is non-adaptive. % No further information is revealed before termination.
\end{remark}

 %In the correlated-values setting, the joint distribution of \(V\) need not be
%given explicitly; instead we assume sampling access to the joint
%distribution of \(V\): we can draw i.i.d.\ samples \(V^{(1)},V^{(2)},\ldots\)
%from the distribution of \(V\) in time polynomial in the input size. Under standard assumptions, this
%implies an \(\varepsilon\)-additive value oracle for objectives of the form
%\(\RevN{n}{\pi}\) by Monte Carlo estimation, since for any fixed plan \(\pi\) the
%random variable \(\RevV{n}{\pi}{V}\) is bounded and can be evaluated in time
%\(O(|\pi|)\) on each sample. Our running-time
%guarantees are therefore with respect to this standard approximate value-oracle model. We formalize the computational model in Appendix \ref{subsec:input-representation-and-computational-model}.

\subsubsection{Multi-call special case}
\label{subsec:multi-call-def}

We say the model is \emph{multi-call} if plans are permitted to include repeated
agent indices, meaning that there may exist distinct times \(s\neq t\) with
\(i_s=i_t\). This corresponds to making multiple offers to the same underlying
agent at different prices. Lemma~\ref{lem:multicall-special-case} (see Appendix \ref{sec:app-model-proofs}) formalizes how
to express this variant within the correlated-values model under the single-call
constraint, i.e. as a special case of our model.

\begin{remark}[Truthful acceptance under multiple offers]
\label{rem:truthful-multicall}
In the multi-call model described above we implicitly assume truthful agent behavior: whenever an
agent \(i\) is offered price \(r\), the acceptance decision is determined
by the valuation threshold rule \(\mathbf{1}\{V_i \ge r\}\), independent of potential future offers.
\end{remark}

\section{General Valuations and Known Horizons}
\label{sec:known}

We begin with the known-horizon model under correlated valuations.  
Recall that, for independent valuations, near-optimal approximation guarantees are known via PTAS type results \citep{segev2020efficient,chen2016combinatorial,fu2018ptas}.
Results based on LP relaxations and rounding
\citep{epstein2024selection,chakraborty2010approximation} achieve a $(1 - 1/e)$ approximation guarantee.
However, these approaches rely critically on independence and do not extend to general
correlated agents.

%"Standard LP relaxations that keep track of marginal probabilities only suffer from large integrality gaps in the correlated setting and PTAS results that rely on dynamic program transitions have to take into account the entire history of calls" - for later

Perhaps a natural attempt is to frame the problem as subset selection over a
time-indexed ground set of possible calls.  
Each element is a triple \((i,r,t)\), specifying an agent \(i\), a posted price
\(r\), and a time \(t\).  
Feasibility is captured by intersection of two partition matroid constraints: at most one call may be made
to any agent, and at most one call may be made at each time.
However, the expected revenue obtained from a selected set of calls, when viewed
as a set function over this time-stamped ground set, lacks the structure needed to apply standard subset-selection techniques.  
In particular, Example~\ref{ex:nonmono-nonsubmod} (Appendix \ref{sec:app-known-examples}) shows that this objective is neither monotone
nor submodular, even under independent valuations.

We reformulate the problem and impose structure on the objective by separating the choice of \emph{which} calls to make from the
choice of \emph{how} to execute them.  
Specifically, we fix an execution order \(\Order\) on calls and, for any feasible set
\(S\), evaluate its expected revenue by executing the calls in \(S\) according
to \(\Order\).
This yields an \emph{order-induced} set function that depends only on the selected
calls and the fixed order, and allows us to study structural properties of the
objective independently of sequential feasibility concerns.

In the known-horizon setting, there is a canonical choice of execution order.
For any feasible set of calls, it is optimal to execute them in decreasing order
of price.  
We show that under this decreasing-price order, the induced objective is
monotone and submodular.
Moreover, the feasibility constraints correspond to a partition matroid.
As a result, the correlated known-horizon problem reduces to monotone submodular
maximization under a matroid constraint, yielding a \((1-1/e)\)-approximation.
We further show that this approximation factor is optimal unless
\(\mathrm{P}=\mathrm{NP}\). We now formalize this, with full details deferred to Appendix~\ref{sec:app-known-proofs}.

\subsubsection*{Order-induced set functions}
\label{subsec:order-induced}

Given a set \(S\subseteq\Calls\) and a total order \(\Order\) on \(\Calls\), let
\(\pi(S,\Order)\) denote the sequence obtained by listing the elements of \(S\)
in increasing order under \(\Order\). When the order is clear from context, we
write \(\pi(S)\).

\begin{definition}[Decreasing-price total order]
\label{def:global-order}
Let \(\orddown\) denote the total order on the call set
\(\Calls=\{(i,r): i\in I,\ r\in M_i\}\) that sorts calls by nonincreasing price,
with an arbitrary but fixed tie-breaking rule. For any subset \(S\subseteq\Calls\),
let $\pi^{\mathrm{d}}(S) \;:=\; \pi(S,\orddown)$
denote the sequence obtained by listing the elements of \(S\) in increasing
order under \(\orddown\) (equivalently, in nonincreasing price order).
\end{definition}

The overall expected revenue under this order is written as:
$
  \RevUnk{\pi^{\mathrm{d}}(S)}
  \;=\;
  \mathbb{E}_{V,N}\!\bigl[\RevV{N}{\pi^{\mathrm{d}}(S)}{V}\bigr].
$

\begin{lemma}[Decreasing-price order is pointwise optimal]
\label{lem:price-order-optimal}
For any finite \(S\subseteq\Calls\) and any valuation realization
\(\mathbf{v}\in\mathbb{R}_{\ge 0}^{I}\),
\[
  \RevV{\infty}{\pi^{\mathrm{d}}(S)}{\mathbf{v}}
  \;=\;
  \max_{\pi\ \text{a permutation of }S}\ \RevV{\infty}{\pi}{\mathbf{v}}.
\]
Here \(\infty\) indicates that the horizon does not truncate the sequence, i.e.,
\(\RevV{\infty}{\pi}{\mathbf{v}}=\RevV{n}{\pi}{\mathbf{v}}\) for any \(n\ge|\pi|\).
\end{lemma}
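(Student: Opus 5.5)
With no truncation, the realized revenue of any ordering of $S$ under a fixed valuation $\mathbf{v}$ is simply the price of the first accepted call in that ordering, or $0$ if no call in $S$ is accepted. So the statement reduces to a pointwise comparison on accepted calls.

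Fix $\mathbf{v}$ and define the set of calls in $S$ that would be accepted if reached,
\[
  A(\mathbf{v}) \;:=\; \{\, e\in S : v_{i_e}\ge r_e \,\}.
\]
Acceptance of a call depends only on $\mathbf{v}$, not on its position in the sequence. Hence for any permutation $\pi$ of $S$ we have $\RevV{\infty}{\pi}{\mathbf{v}} = r_{e^\pi}$, where $e^\pi$ is the first element of $A(\mathbf{v})$ appearing in $\pi$. If $A(\mathbf{v})=\emptyset$, every permutation earns $0$ and the claim is trivial.

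Now suppose $A(\mathbf{v})\neq\emptyset$. The key steps are as follows.

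First, for every permutation $\pi$ we get the upper bound
\[
  \RevV{\infty}{\pi}{\mathbf{v}} \;=\; r_{e^\pi} \;\le\; \max_{e\in A(\mathbf{v})} r_e,
\]
since $e^\pi$ is some element of $A(\mathbf{v})$.

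Second, under $\pi^{\mathrm{d}}(S)$ the calls appear in nonincreasing price order. Every call preceding the first accepted one has price at least as large as the accepted call's price. Any element of $A(\mathbf{v})$ with strictly larger price would therefore have appeared earlier and been accepted first, which is a contradiction. So the first accepted call under $\pi^{\mathrm{d}}(S)$ has price equal to $\max_{e\in A(\mathbf{v})} r_e$. Ties in price do not matter, because any tie-breaking rule still yields that maximal price.

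Third, since $\pi^{\mathrm{d}}(S)$ is itself a permutation of $S$, attaining the upper bound gives equality with the maximum.

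There is no real obstacle here. The only point requiring care is the observation that acceptance is position-independent under a fixed realization $\mathbf{v}$, so that reordering changes only which accepted call comes first. This is exactly what the single-call threshold acceptance rule guarantees. One could also argue by an adjacent-swap exchange argument, but the direct characterization is cleaner and I would use it.
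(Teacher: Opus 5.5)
Your proof is correct and follows essentially the same argument as the paper: you bound every permutation's revenue by the maximum accepted price, then show that the decreasing-price sequence attains it. Your version of the second step, which argues that no accepted call of strictly larger price can precede the first acceptance, is slightly more careful than the paper's; the paper claims every call before a chosen maximizer $e^\star$ is rejected, which glosses over equal-price ties that your argument handles explicitly.
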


Further, we show that
for every valuation realization \(\mathbf{v}\), $\RevV{\infty}{\pi^{\mathrm{d}}(S)}{\mathbf{v}}$ is monotone and submodular in the
selected call set \(S\) (see Appendix~\ref{sec:app-known-proofs}). Based on this key property, we establish the reduction in Theorem \ref{thm:known-horizon-reduction} below (proof in  Appendix \ref{sec:app-known-proofs}).

\begin{theorem}[Known-horizon reduction]
\label{thm:known-horizon-reduction}
Consider a deterministic horizon \(n\in\mathbb{N}\). For any valuation profile
distribution, the known-horizon sequential pricing
problem can be formulated as maximizing a monotone submodular set function
subject to a matroid feasibility constraint.
\end{theorem}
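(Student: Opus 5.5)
We would work on the ground set \(\Calls\) of agent--price pairs and, for a deterministic horizon \(n\), define
\[
f_n(S) \;:=\; \max_{T\subseteq S,\ T \text{ feasible},\ |T|\le n} \RevN{n}{\pi^{\mathrm{d}}(T)}.
\]
We restrict the domain to sets that are themselves feasible. Feasibility here means two things: at most one call per agent, and \(|S|\le n\). The family \(\mathcal{M}_n=\{S\subseteq\Calls: |\{e\in S: i_e=i\}|\le 1\ \forall i,\ |S|\le n\}\) is a laminar matroid, namely a partition matroid truncated to rank \(n\). Truncating a matroid yields a matroid, so the first step is simply to check the exchange axiom, which is routine.

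On \(\mathcal{M}_n\) we may take \(f_n(S)=\RevN{n}{\pi^{\mathrm{d}}(S)}\). Because \(|S|\le n\), no truncation occurs, so \(f_n(S)=\mathbb{E}_V[\RevV{\infty}{\pi^{\mathrm{d}}(S)}{V}]\).

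The equivalence of the two problems follows from Lemma~\ref{lem:price-order-optimal}. Any feasible plan \(\pi\) has its executed prefix of length at most \(n\) forming a set \(S\in\mathcal{M}_n\) (drop the calls beyond position \(n\)). Reordering that set in decreasing price weakly increases revenue pointwise in \(\mathbf{v}\), and therefore in expectation. Conversely, every \(S\in\mathcal{M}_n\) yields the feasible plan \(\pi^{\mathrm{d}}(S)\). Hence \(\max_{S\in\mathcal{M}_n} f_n(S)=OPT\), and any \(S\) maps back to a plan with the same value.

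It remains to show that \(g_{\mathbf v}(S):=\RevV{\infty}{\pi^{\mathrm{d}}(S)}{\mathbf v}\) is monotone and submodular for every fixed \(\mathbf v\), over all subsets of \(\Calls\). An expectation of such functions is again monotone submodular, and restricting to \(\mathcal{M}_n\) preserves these properties on the independent sets.

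To prove this, we would use a closed form for \(g_{\mathbf v}\). Under decreasing-price execution, the first accepted call is the highest-priced call in \(S\) that is accepted, up to ties. Therefore
\[
g_{\mathbf v}(S)=\max\{ r_e : e\in S,\ v_{i_e}\ge r_e\}\quad(\max\emptyset=0).
\]
Ties in price give equal revenue, so the tie-breaking rule is irrelevant. Consequently \(g_{\mathbf v}\) is a weighted-max function of the form \(S\mapsto\max_{e\in S} w_e\), with \(w_e=r_e\mathbf 1\{v_{i_e}\ge r_e\}\ge 0\).

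Such functions are monotone and submodular. For monotonicity, adding an element can only raise the max. For submodularity, take \(A\subseteq B\): the marginal gain of \(e\) is \((w_e-\max_A w)^+\ge(w_e-\max_B w)^+\).

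The main point needing care is justifying the closed form for \(g_{\mathbf v}\), which is exactly the content of Lemma~\ref{lem:price-order-optimal}'s pointwise argument. The other point needing care is the reduction step when the optimal plan is longer than \(n\) or when \(n\) exceeds the number of agents. We handle this by truncating to the first \(\min(m,n)\) calls, which does not change \(\RevN{n}{\cdot}\).

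Combining these steps gives the stated formulation: maximize the monotone submodular function \(f_n\) over the matroid \(\mathcal{M}_n\).
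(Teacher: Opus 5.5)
Your proposal is correct and follows essentially the same route as the paper: pointwise optimality of decreasing-price execution (Lemma~\ref{lem:price-order-optimal}), the weighted-max representation $\max_{e\in S} r_e\mathbf{1}\{v_{i_e}\ge r_e\}$ giving pointwise monotone submodularity that survives expectation (Lemma~\ref{lem:dec-reachable-submod}), the observation that no truncation occurs when $|S|\le n$, and a matroid feasibility family. Where you differ, your version is more careful: you identify that family as a partition matroid truncated to rank $n$, which is the right justification. The paper instead appeals to ``the intersection of two matroids is a matroid,'' which is false in general and only happens to hold for this uniform-with-partition case. You also explicitly handle plans longer than $n$ by truncating to the executed prefix.
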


\begin{corollary}[Known-horizon approximation]
\label{cor:known-horizon}
In the known-horizon setting of Theorem~\ref{thm:known-horizon-reduction}, the
continuous-greedy algorithm applied to the multilinear extension, followed by
pipage rounding \cite{calinescu2011maximizing}, returns a \((1-1/e)\)-approximate feasible set
\(S_{\mathrm{cont}}\):
\[
  \RevN{n}{\pi^{\mathrm{d}}(S_{\mathrm{cont}})}
  \;\ge\;
  (1-1/e)\,\RevN{n}{\pi^{\mathrm{d}}(S^\star)},
\]
where \(S^\star\) is an optimal feasible call set.
\end{corollary}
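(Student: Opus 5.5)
The plan is to derive Corollary~\ref{cor:known-horizon} directly from Theorem~\ref{thm:known-horizon-reduction} together with the guarantee of \cite{calinescu2011maximizing}. The argument has three steps.

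\textbf{Step 1: make the reduction explicit.} The ground set is the call set \(\Calls\). The feasible family is
\[
  \mathcal{F}=\{S\subseteq\Calls:\ |S\cap\{(i,r)\}_r|\le 1\ \forall i\in I,\ |S|\le n\}.
\]
This is a partition matroid (one call per agent) intersected with a uniform matroid. To obtain a single matroid, I would use the following observation. For \(S\) independent in the agent partition matroid, the first \(n\) calls of \(\pi^{\mathrm{d}}(S)\) form a subset \(S'\subseteq S\) with \(|S'|\le n\). Moreover, \(\pi^{\mathrm{d}}(S')\) is a prefix of \(\pi^{\mathrm{d}}(S)\), so \(\RevN{n}{\pi^{\mathrm{d}}(S')}=\RevN{n}{\pi^{\mathrm{d}}(S)}\).

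The cleaner route is to take the objective
\[
  f(S)=\RevN{n}{\pi^{\mathrm{d}}(S)} \quad\text{on}\quad \mathcal{M}=\{S:\ S \text{ single-call},\ |S|\le n\}.
\]
This is a laminar (hence) matroid, and in fact a truncated partition matroid, which is still a matroid. I take Theorem~\ref{thm:known-horizon-reduction} to supply exactly that \(f\) is monotone submodular and \(\mathcal{M}\) is a matroid. I also need \(f(\emptyset)=0\) and nonnegativity, both immediate.

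\textbf{Step 2: relate \(S^\star\) to \(OPT\) and apply continuous greedy.} By Lemma~\ref{lem:price-order-optimal}, applied pointwise in \(\mathbf{v}\), every feasible plan \(\pi\) with at most \(n\) relevant calls satisfies
\[
  \RevN{n}{\pi}\le \RevN{n}{\pi^{\mathrm{d}}(S)},
\]
where \(S\) is the set of its first \(\min(m,n)\) calls. Hence \(\max_{S\in\mathcal{M}} f(S)=OPT\), and this value is attained by \(S^\star\).

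Continuous greedy on the multilinear extension \(F\) yields a fractional point \(x\) in the matroid polytope with \(F(x)\ge(1-1/e)f(S^\star)\), up to a lower-order sampling error, which is absorbed as the paper permits. Pipage rounding then produces an integral \(S_{\mathrm{cont}}\in\mathcal{M}\) with \(f(S_{\mathrm{cont}})\ge F(x)\). This is where submodularity is needed: it makes \(F\) convex along directions \(e_i-e_j\).

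\textbf{Step 3: check running time, the expected main obstacle.} The call set must be finite and of polynomial size, which holds since \(r\in M_i\) ranges over the support prices. \(F\) must be evaluable, which the value oracle for \(\RevN{n}{\cdot}\) together with sampling provides. This also settles that the returned set is feasible and its plan \(\pi^{\mathrm{d}}(S_{\mathrm{cont}})\) is executable.
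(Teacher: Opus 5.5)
Your route is the paper's: the corollary comes from combining Theorem~\ref{thm:known-horizon-reduction} with the continuous-greedy/pipage guarantee of \cite{calinescu2011maximizing}. Your description of the feasible family as the rank-$n$ truncation of the one-call-per-agent partition matroid is a more accurate justification than the paper's remark that the intersection of the two matroids is a matroid.

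\textbf{Gap.} One step fails as written. You apply continuous greedy to the multilinear extension of $f(S)=\RevN{n}{\pi^{\mathrm{d}}(S)}$, and you specify $f$ only on $\mathcal{M}$.
\begin{itemize}
\item The extension $F(x)=\mathbb{E}[f(R(x))]$ includes each call independently. So $R(x)$ has more than $n$ elements with positive probability even when $x$ lies in the matroid polytope (e.g.\ $n=1$, $x_a=x_b=\tfrac12$ for calls to two different agents).
\item The continuous-greedy analysis uses $f(S^\star)\le F(x\vee\mathbf{1}_{S^\star})\le F(x)+\sum_{e\in S^\star}\partial_e F(x)$. Pipage rounding uses convexity of $F$ along $\mathbf{1}_a-\mathbf{1}_b$. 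Both require $f$ to be monotone and submodular on all of $2^{\Calls}$.
\item The truncated objective is neither. Take $n=1$, a call $b$ at price $5$ that is always accepted, and a call $a$ to another agent at price $10$ accepted with probability $0.1$. Then $\RevN{1}{\pi^{\mathrm{d}}(\{b\})}=5$ but $\RevN{1}{\pi^{\mathrm{d}}(\{a,b\})}=1$. Three-call examples of the same kind also break submodularity off $\mathcal{M}$.
\end{itemize}
The proof of Theorem~\ref{thm:known-horizon-reduction} only certifies monotone submodularity of $\RevN{n}{\pi^{\mathrm{d}}(\cdot)}$ on $\{S:|S|\le n\}$. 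That is not enough for this step.

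\textbf{Repair.} Run the algorithm on the untruncated objective $\bar f(S):=\mathbb{E}_V[\RevV{\infty}{\pi^{\mathrm{d}}(S)}{V}]=\mathbb{E}_V[\max_{e\in S} r_e\mathbf{1}\{V_{i_e}\ge r_e\}]$.
\begin{itemize}
\item $\bar f$ is monotone and submodular on every subset by Lemma~\ref{lem:dec-reachable-submod}.
\item $\bar f(S)=\RevN{n}{\pi^{\mathrm{d}}(S)}$ whenever $|S|\le n$.
\item Since $S_{\mathrm{cont}}$ and $S^\star$ both lie in $\mathcal{M}$, the $(1-1/e)$ bound for $\bar f$ transfers verbatim.
\end{itemize}
This is the function the paper's proof of the theorem actually works with.

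\textbf{Smaller point.} The pipage rounding of \cite{calinescu2011maximizing} is randomized. It gives $\mathbb{E}[f(S_{\mathrm{cont}})]\ge F(x)$, not a deterministic inequality, and $1-1/e-o(1)$ once gradients are sampled. The paper glosses over this as well.
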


\noindent
%The approximation guarantee follows from the continuous-greedy framework of
%~\cite{calinescu2011maximizing} for
%maximizing a monotone submodular function subject to a matroid constraint, under
%the value-oracle model described in
%Section~\ref{sec:model}. 
The next Theorem \ref{thm:hardness-correlated-1-1e} (proof in Appendix \ref{sec:app-known-proofs}) shows that this result is the ``best possible".

\begin{theorem}[Tightness of the \((1-1/e)\) factor under correlated values]
\label{thm:hardness-correlated-1-1e}
Consider \(\varepsilon>0\). Unless \(\mathrm{P}=\mathrm{NP}\), there is no
polynomial-time \((1-1/e+\varepsilon)\)-approximation algorithm for the
known-horizon sequential pricing problem in the correlated-values setting.
\end{theorem}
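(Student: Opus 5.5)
We reduce from weighted (or unweighted) max-$k$-cover, for which Feige's theorem states that, unless $\mathrm{P}=\mathrm{NP}$, no polynomial-time algorithm achieves a $(1-1/e+\varepsilon)$-approximation, even on instances where the optimum covers all elements.

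\textbf{Construction.} Given a universe $U=\{u_1,\dots,u_m\}$ with weights $w_u\ge 0$, sets $A_1,\dots,A_M\subseteq U$, and a budget $k$, we build a sequential pricing instance with agents $I=\{1,\dots,M\}$ and known horizon $n=k$. The valuation profile is correlated through a latent element. Draw $u\in U$ with probability $w_u/\sum_{u'}w_{u'}$. Set $V_i=1$ if $u\in A_i$ and $V_i=0$ otherwise. The joint distribution has support of size $m$, so it is polynomially representable and the revenue oracle is exactly computable. We restrict each agent's price menu to $\{1\}$; the final step explains why this loses nothing.

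\textbf{Correspondence.} Take any feasible plan with at most $k$ calls, one per agent. Calls at price $0$ contribute nothing. Calls at a price $r\in(0,1]$ are accepted exactly when $u\in A_i$, and then earn $r\le 1$. Calls at a price $r>1$ are never accepted. Hence any plan's revenue is at most $\Pr[u\in\bigcup_{i\in S}A_i]$, where $S$ is the set of agents called. Conversely, calling the agents in $S$ at price $1$ earns exactly $\Pr[u\in\bigcup_{i\in S}A_i]$, because execution stops at the first accepting agent and collects $1$. This probability equals the normalized weighted coverage of $S$.

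The map therefore preserves optimal values: $OPT$ equals the maximum weighted $k$-coverage divided by $W=\sum_u w_u$. It also gives an efficient map back. From any plan $\pi$, take $S$ to be its set of agents (at most $k$ of them). The coverage of $S$ is at least $W\cdot\mathrm{Rev}(\pi)$. So a $(1-1/e+\varepsilon)$-approximate pricing algorithm yields a $(1-1/e+\varepsilon)$-approximate max-$k$-cover algorithm, contradicting Feige's result unless $\mathrm{P}=\mathrm{NP}$.

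\textbf{Main obstacle.} The key point is that the bound holds even if arbitrary prices are allowed. This is exactly the inequality that revenue is at most coverage, which follows because valuations take values in $\{0,1\}$. The remaining task is to check the computational model. The instance must be of polynomial size, which holds because the correlated distribution is an explicit list of $m$ atoms. The value oracle must be polynomial-time, which holds by summing over the $m$ atoms. If weights matter, we can use unweighted max-$k$-cover with uniform $u$, where Feige's hardness already applies.
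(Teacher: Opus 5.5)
Your proposal is correct and matches the paper's proof: the same reduction from weighted Max-$k$-Coverage, with one agent per set, a single price-$1$ call per agent, horizon $n=k$, and the latent-element distribution $V_i=\mathbf{1}\{u\in\mathcal{S}_i\}$ with $\Pr[u]=w(u)/W$, so that expected revenue equals normalized coverage and Feige's bound transfers. Your extra check that revenue is at most coverage even when arbitrary prices are allowed is a small robustness addition; the paper does not need it because it fixes each agent's price menu to $\{1\}$.
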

We prove tightness via a reduction from (weighted) Max-\(k\)-Coverage.
\citet{feige1998threshold} showed that weighted Max-\(k\)-Coverage is NP-hard to approximate
within a factor \(1-1/e+\varepsilon\) for every \(\varepsilon>0\). %The formal proof of \ref{thm:hardness-correlated-1-1e} appears in Appendix \ref{sec:app-known-proofs}.
\section{Independent Valuations and Unknown Horizons}
\label{sec:unknown-intro}

A natural first attempt in the presence of horizon uncertainty is to reuse the
successful order-based perspective from the known-horizon setting. This approach, however, fails even in the simplest of
settings. Consider two offers: one posts a high price but is accepted with low
probability, while the other posts a lower price but is accepted with high probability.
With a known horizon, it is always optimal to make the higher price offer first.
With an unknown horizon, this preference can reverse: making the easy-to-accept
offer earlier can be better, because delaying risks never reaching it at all. 

In fact, a stronger statement holds. When the horizon is unknown, there need not exist any single total order over the ground set that is optimal for all subsets of available calls. As shown in Example~\ref{ex:set-dependent-order} (Appendix~\ref{sec:app-unknown-quarter-examples}), the relative benefit of executing one offer before another depends on which other calls are available later, so no ordering rule can be chosen independently of the selected set.

One alternative is to revert to the time-expanded ground set, in which each decision is indexed by a buyer, price, and time. However, as Example~\ref{ex:nonmono-nonsubmod} (Appendix~\ref{sec:app-known-examples}) demonstrates, this leads to a non-monotone, non-submodular objective, precluding the direct use of classical submodular maximization techniques.

The remainder of this section develops a sequence of algorithmic ideas that progressively recover structure lost due to horizon uncertainty. In Section~\ref{sec:unknown-arbitrary-quarter}, we introduce a pruning operation that transforms the objective on the expanded ground set into a monotone function while simultaneously reducing the feasibility constraints to a \emph{single} partition matroid. Although the resulting function remains non-submodular, it admits a \emph{submodular order} in the reverse direction of execution time. Leveraging existing results for maximizing functions with a submodular order under matroid constraints, this yields a deterministic $1/4$-approximation algorithm for arbitrary horizon distributions. At a high level, this approach generalizes the method we used for known-horizon: we impose structure on the objective to transform it into a form that is easier to optimize.

In Section~\ref{sec:structural-relaxation}, we present an improved randomized algorithm that achieves a
$(1/2-\varepsilon)$ approximation.  This approach departs from submodular-order
methods and instead combines the solution of a relaxation with online contention resolution to better exploit the
structure of independent valuations under horizon uncertainty.

Finally, in Section~\ref{sec:unknown-geom}, we show that when the horizon length is drawn from a geometric random variable, memorylessness induces a
set-independent ordering under which the expected revenue function is monotone
and submodular.  This allows us to match the optimal $(1-1/e)$ approximation
ratio from the known-horizon setting, despite the presence of horizon
uncertainty.

\subsection{General Horizons: A Deterministic Algorithm}
\label{sec:unknown-arbitrary-quarter}

In this section we present a deterministic algorithm for general horizon
distributions. The main technical ingredient is our pruning function,
defined on the time-indexed ground set. This pruning function
restores structural properties absent from the original objective—monotonicity and a submodular order—while
preserving the relevant optimization value. We show that the resulting
objective admits a reverse-time submodular order, which allows us to
instantiate the submodular-order framework of
\citep{pmlr-v202-udwani23a} and obtain a
$1/4$-approximation via a simple deterministic procedure.

Before the
formal analysis, we develop a conceptual overview in stages: we (i) identify the
source of non-monotonicity and non-submodularity (Section \ref{subsec:marginal-future}),
(ii) define a pruned evaluation $G(\cdot)$ on the time-indexed ground set (Section \ref{subsec:prune-def}), (iii) explain how pruning restores a submodular order (Section \ref{subsec:prune-order-intuition}), and (iv) complete the definition of the admitted submodular order (Section \ref{subsec:within-time-tiebreak}).

\subsubsection{Source of non-submodularity}
\label{subsec:marginal-future}

Before presenting our remedy for the non-monotonicity and non-submodularity of
the objective on the expanded ground set, it is helpful to see why these two
failures are closely related.

Fix an execution order and consider a plan written in sequence as \((P,e,S)\),
where \(P\) is the prefix before offer \(e\) and \(S\) is the suffix after \(e\).
Assume that \(e\) is executed at time \(t=\ti{e}\). For any ordered set \(A\),
recall $\RevUnk{A}$ as the expected revenue from executing $A$ and let $Fail(A):=\Pr[\text{no acceptance occurs in }A]$,
where \(Fail(A)\) is taken over the acceptance events (and is independent of the
horizon \(N\)). A standard conditioning argument yields
\[
\RevUnk{P,e,S}
=
\RevUnk{P}
+Fail(P)\Big(q_t\,r_e\,p_e+(1-p_e)\,\RevUnk{S}\Big),
\qquad
\RevUnk{P,S}
=
\RevUnk{P}
+Fail(P)\,\RevUnk{S}.
\]
Therefore the marginal contribution of inserting \(e\) between \(P\) and \(S\) is
\begin{equation}
\label{eq:marginal-prefix-suffix}
\Delta_e(P,S)
:=
\RevUnk{P,e,S}-\RevUnk{P,S}
=
Fail(P)\,p_e\,\bigl(q_t\,r_e-\RevUnk{S}\bigr).
\end{equation}
Equation~\eqref{eq:marginal-prefix-suffix} makes explicit that even with a fixed
prefix \(P\), the marginal gain from adding \(e\) depends on the value of the
suffix through \(\RevUnk{S}\). Suppose there exist two suffix sets \(S\subseteq S^+\)
such that \(\RevUnk{S^+}<\RevUnk{S}\), i.e., adding available future calls lowers
the expected revenue of the suffix plan. Then for any fixed prefix \(P\),
\[
\Delta_e(P,S^+) - \Delta_e(P,S)
=
Fail(P)\,p_e\,\bigl(\RevUnk{S}-\RevUnk{S^+}\bigr)
\;>\;0,
\]
so the marginal gain from inserting \(e\) \emph{increases} when the available
future set grows. This is exactly the opposite of the diminishing-returns
behavior required by submodularity.

Intuitively, a larger set of calls $S^+$ available later should only help overall revenue: the seller
can always refrain from using additional options. The problem is that the term
\(\RevUnk{S}\) in \eqref{eq:marginal-prefix-suffix} does not represent ``the best
use'' of the later calls; it represents the revenue of a fixed execution, and
under that evaluation, adding calls can lower the
revenue. This motivates pruning: we evaluate any set \(S\) by first discarding
the calls that are not worth using and keeping only a best-performing choice of
at most one call per time. With this evaluation, enlarging \(S\) can never hurt,
because any newly added calls can be pruned away.

\subsubsection{Pruned evaluation on the time-indexed ground set}
\label{subsec:prune-def}

We now define the \emph{pruning function} \(G(\cdot)\) on the time-indexed ground set $\CallsT=\{(i,r,t):i\in I,\,r\in M_i,\,t\in[T]\},$
and interpret it as the optimal revenue obtainable by selecting at most one offer per time.
Define the family of \emph{time-feasible} sets by
\[
\mathcal I_{\mathrm{time}}
:= \Bigl\{
A \subseteq \CallsT :
\forall\,t\in[T],\ \bigl|\{e\in A: \ti{e}=t\}\bigr|\le 1
\Bigr\}.
\]
For any $A\subseteq\CallsT$, let $\pi(A)$ denote the plan that executes the calls in $A$ in increasing designated time, and define
\[
G(S)
\;:=\;
\max\bigl\{\RevUnk{\pi(A)}:\ A\subseteq S,\ A\in \mathcal I_{\mathrm{time}}\bigr\}.
\]

This definition highlights two roles played by pruning: (i) it replaces raw
suffix revenue by an internally computed future value that is monotone in the
available set \(S\), and (ii) it enforces the one-offer-per-time restriction
within the definition of the objective. The remaining feasibility constraint in
our model is the agent-side call restriction, which we enforce explicitly
via a partition matroid.

We can efficiently compute \(G(\cdot)\) via a backward value-to-go recursion.
At time \(t\), among the calls \(e\in S_t:=\{e\in S:\ti{e}=t\}\), the recursion compares offers via the
\emph{one-step score}
\begin{equation}
\label{eq:one-step-score}
q_t\,\pr{e}\,p_e \;+\; (1-p_e)\,\mathrm{(future\ value)} ,
\end{equation}
i.e., immediate expected revenue if the horizon reaches time \(t\), plus the future value if the offer is rejected.
Lemma~\ref{lem:envelope-g} (see Appendix \ref{sec:app-unknown-quarter-proofs} for proof) formalizes this computation and shows that it attains the definition of \(G(S)\).

\begin{lemma}[$G$ computed by a backward recursion]
\label{lem:envelope-g}
For each \(S\subseteq\CallsT\), define \(G_{T+1}(S):=0\), and for \(t=T,T-1,\ldots,1\) define
\[
G_t(S)
:=
\max\!\Bigl\{
  0,\;
  \max_{e\in S_t}
     \bigl(
        q_t\,\pr{e}\,p_e
        + (1-p_e)\,G_{t+1}(S)
     \bigr)
\Bigr\},
\qquad
S_t:=\{e\in S:\ti{e}=t\}.
\]
Then \(G(S)=G_1(S)\), where \(G(S):=\max\{\RevUnk{\pi(A)}:\ A\subseteq S,\ A\in \mathcal I_{\mathrm{time}}\}\).
\end{lemma}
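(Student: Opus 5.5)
We prove the identity by backward induction on $t$, after strengthening it to a statement about suffixes. For $t\in[T+1]$ let $S_{\ge t}:=\{e\in S:\ti{e}\ge t\}$. For a time-feasible $A$, write $\mathrm{val}_t(A)$ for the expected revenue collected from the calls of $A$ at times $\ge t$, conditional on the process reaching time $t$ with no prior acceptance. With independent valuations this is
\[
\mathrm{val}_t(A)=\sum_{e\in A,\ \ti{e}\ge t} q_{\ti{e}}\,\pr{e}\,p_e\prod_{e'\in A,\ t\le \ti{e'}<\ti{e}}(1-p_{e'}),
\]
which is the same product form behind the conditioning identity preceding \eqref{eq:marginal-prefix-suffix}. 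Here the factor $q_{\ti{e}}=\Pr[N\ge \ti{e}]$ accounts for the horizon, using the independence of $N$ and $V$. Define $H_t(S):=\max\{\mathrm{val}_t(A): A\subseteq S_{\ge t},\ A\in\mathcal I_{\mathrm{time}}\}$, so that $H_1(S)=G(S)$ and $H_{T+1}(S)=0=G_{T+1}(S)$. The claim $G(S)=G_1(S)$ then follows from showing $H_t(S)=G_t(S)$ for every $t$, by induction from $t=T+1$ down to $t=1$.

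\emph{Upper bound} ($H_t\le G_t$). Fix an optimal $A$ for $H_t$. If $A$ has a call $e$ at time $t$, peeling it off gives the key identity
\[
\mathrm{val}_t(A)=q_t\,\pr{e}\,p_e+(1-p_e)\,\mathrm{val}_{t+1}(A\setminus\{e\}).
\]
Since $A\setminus\{e\}$ is time-feasible and lies in $S_{\ge t+1}$, we have $\mathrm{val}_{t+1}(A\setminus\{e\})\le H_{t+1}(S)=G_{t+1}(S)$ by induction. Hence $\mathrm{val}_t(A)$ is at most the score of $e$ in \eqref{eq:one-step-score}, which is at most $G_t(S)$. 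If $A=\emptyset$ the value is $0$, which is also at most $G_t(S)$.

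\emph{Lower bound} ($H_t\ge G_t$). Take the maximizer $e\in S_t$ of the recursion together with an optimal $A'\subseteq S_{\ge t+1}$ for $H_{t+1}$. The set $A'\cup\{e\}$ is time-feasible, and the same identity shows it achieves the score of $e$. The option $0$ in the recursion is achieved by $A=\emptyset$.

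The main obstacle is the remaining case: $A$ is nonempty but has no call at time $t$. Then $\mathrm{val}_t(A)=\mathrm{val}_{t+1}(A)\le G_{t+1}(S)$, and this must also be dominated by $G_t(S)$. As literally written, the recursion offers only the options $0$ and scores of calls in $S_t$, so the skip option is missing. This matters when $S_t=\emptyset$, or when every call in $S_t$ is a poor choice: a call with tiny price and $p_e=1$ has score $\approx 0<G_{t+1}(S)$. I would resolve this by reading the outer ``$0$'' as the option of making no call at time $t$, which contributes value-to-go $G_{t+1}(S)$. Concretely, replace $\max\{0,\cdot\}$ by $\max\{G_{t+1}(S),\cdot\}$. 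An equivalent fix is to include in $\CallsT$, at each $t$, a dummy call with $p_e=0$, whose score is exactly $G_{t+1}(S)$.

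With this reading the induction goes through, and the value $0$ is still covered since $G_{t+1}\ge 0$. The lemma's use downstream needs only that $G_1(S)$ equals the best time-feasible sub-plan of $S$, and that is exactly what the induction gives. Finally, computing each $G_t$ takes $O(|S_t|)$ work given $G_{t+1}$, so $G(S)$ is computable in time $O(|S|+T)$.
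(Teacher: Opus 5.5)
Your diagnosis is right, and your fix is the reading the paper itself relies on. With the outer option equal to $0$, any $S$ with $S_1=\emptyset$ gets $G_1(S)=0$, even though a later call can earn positive revenue. Your small-price, $p_e=1$ example breaks the identity even when every $S_t$ is nonempty. In Lemmas~\ref{lem:select-mono-det} and~\ref{lem:rev-time-diminish-det} the paper introduces a null action $\varnothing$ with $(\pr{\varnothing},p_{\varnothing})=(0,0)$ and score $\phi_U(\varnothing)=G_{t+1}(U)$, which is exactly your $\max\{G_{t+1}(S),\cdot\}$. Its proof of the present lemma also appeals to ``the outside option'' without reconciling it with the $0$. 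With that reading your induction is correct.

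One assumption should be made explicit. The identity $H_1(S)=G(S)$ requires that $\RevUnk{\pi(A)}$ weight a call at designated time $t$ by $q_t$, with unused times left idle, rather than running $\pi(A)$ as a compressed sequence. The paper's decomposition \eqref{eq:rev-prefix-suffix} makes the same assumption. The lemma is false without it: take a single call at time $2$ with $r_ep_e>0$ and $q_2<q_1$. Relatedly, $\mathrm{val}_t$ is conditional on no acceptance before $t$, not on the horizon reaching $t$, since its weights are $q_{\ti{e}}$ rather than $q_{\ti{e}}/q_t$.

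Your route differs from the paper's. The paper argues by exchange. Assuming a time-feasible $A\subseteq S$ beats $G_1(S)$, it takes the latest time $t_m$ where $A$ departs from the recursion's choice $e^*=\sel_{t_m}(S)$ and swaps $e^*$ in. It then uses the prefix/suffix decomposition and the argmax property to show revenue does not drop. You instead prove the Bellman equation $H_t(S)=G_t(S)$ for suffix optima by backward induction. Both rest on the same peeling identity, but your version is the more robust one, for three reasons.

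\begin{itemize}
\item In the swap step the paper pairs \eqref{eq:tm-argmax} with only the inequality \eqref{eq:suffix-upperbound}. Because the continuation coefficients $1-p_{e^*}$ and $1-p_{e_A}$ differ, that inequality does not suffice when $p_{e_A}>p_{e^*}$. One actually needs $\RevUnk{\pi(A_{\ge t_m+1})}=G_{t_m+1}(S)$. This holds because $A$ follows the recursion after $t_m$, but it is itself a backward-inductive fact.
\item The claimed ``contradiction'' has to be replaced by iterating the swap over successively earlier disagreements.
\item Disagreements in which one side is the empty choice are not addressed, and that is exactly where the skip option matters.
\end{itemize}

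Your induction settles all of this at once and gives the $O(|S|+T)$ evaluation for free. The exchange view's merit is that it exhibits an explicit chain of non-decreasing modifications from any sub-plan to the pruned plan, which matches the pruning intuition used later.
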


\subsubsection{How pruning restores submodular order}
\label{subsec:prune-order-intuition}

We now explain how the pruning function \(G(\cdot)\) remedies the failures of
monotonicity and diminishing returns identified earlier, and how this structure
leads naturally to a submodular-order formulation.

We have already seen that pruning fixes the monotonicity issue. %when the set of
%calls available grows, the pruned evaluation cannot
%become worse, because newly added calls can be ignored. We now turn to the question of submodularity, and 
We start by discussing why pruning does \emph{not}
 restore full submodularity but only a weaker version of it.

Recall the execution of ordered plan \((P,e,S)\) from \eqref{eq:marginal-prefix-suffix},
where \(P\) is the prefix before \(e\) and \(S\) is the suffix after \(e\). When
we enlarge the set of calls, the enlargement can occur either in the
prefix (adding more calls that may appear before \(e\)) or in the suffix (adding
more calls that may appear after \(e\)).

If the enlargement occurs in the prefix, pruning provides no useful control: it may keep some prefix calls and
discard others, changing the factor \(Fail(P)\) in
\eqref{eq:marginal-prefix-suffix}. Since \(Fail(P)\) can increase or decrease as the available prefix grows, we cannot expect a 
diminishing-returns property in this direction, and therefore cannot expect
full submodularity.

In contrast, If we enlarge only the suffix, \(Fail(P)\) in
\eqref{eq:marginal-prefix-suffix} is fixed. Then the marginal gain varies only
through the suffix term \(\RevUnk{S}\), which pruning evaluates monotonically in
the available set of suffix calls $S$. Hence the marginal benefit of adding \(e\) weakly
decreases as the $S$ grows.

This is the pattern we exploit: we obtain diminishing returns only
when we compare marginal gains of elements as the available suffix is
enlarged. Accordingly, the structure is captured by a
\emph{submodular order} that respects reverse time index: it compares marginal gains
only under enlargements that occur strictly \emph{after} (larger $t$) the elements already
present. We recall the formal definition.

\begin{definition}[Submodular order {\citep[Eq.~(1)]{pmlr-v202-udwani23a}}]
\label{def:submodular-order}
Let $f:2^N\to\mathbb{R}$ and let $\pi$ be a permutation of $N$.
For any set $S\subseteq N$, define
$r_\pi(S):=\max\{\pi(i):i\in S\}$, and for any set $C\subseteq N$ define
$\ell_\pi(C):=\min\{\pi(i):i\in C\}$.
The permutation $\pi$ is a \emph{submodular order} for $f$ if for all
$B\subseteq A\subseteq N$ and all $C\subseteq N$ satisfying
$\ell_\pi(C)>r_\pi(A)$,
\[
  f(C\mid A)\;\le\; f(C\mid B),
\]
where $f(C\mid S):=f(S\cup C)-f(S)$.
\end{definition}

Ordering elements by reverse time is the candidate submodular order for
\(G\) across different times. To obtain a total order on the
ground set \(\CallsT\), however, we must also specify how to order elements that
share the same time index. This is essential because \(G\) enforces the
one-offer-per-time restriction internally: in the recursion, \(G_t(S)\) selects
at most one call from the the set of calls available at $t$. We discuss this in more detail next.

\subsubsection{Submodular order within a time period}
\label{subsec:within-time-tiebreak}

The effect of adding an element
\(e=(i,r,t)\) is governed by how it compares to the best competing offer
already present at time $t$.

Recall that this within-time comparison is determined by the one-step score: 
$
q_t\,\pr{e}\,p_e \;+\; (1-p_e)\,G_{t+1}(S)
$ of $e$ with the corresponding score for the best offer already in $S_t$. 
The first term is the expected revenue if \(e\) is accepted at time \(t\); the
second term is what can still be earned if \(e\) is rejected and the process
reaches later times. As the suffix grows, \(G_{t+1}(S)\) increases,
and the best offer in \(S_t\) may shift. The diminishing-returns effect comes
from what \(e\) replaces at time \(t\): with a small suffix, the current best
option in \(S_t\) is more conservative (higher acceptance probability), whereas with a
larger suffix the current best option can be riskier (lower acceptance probability). Thus, if \(e\) is selected in both cases, it replaces a
weaker baseline under the small suffix, so the improvement from adding \(e\) is larger for the small suffix. Therefore, within a fixed time
period, elements admit a submodular order given by increasing acceptance
probability.

We are now ready to formally define the complete submodular order for $G$.

\begin{definition}[Reverse-time order]
\label{def:sigma-rev}
Let $\sigma^{\mathrm{rev}}$ denote the permutation of $\CallsT$ that sorts elements by decreasing time index,
breaking ties within each time period by sorting in order of \emph{increasing} acceptance probability $p_e$. 
\end{definition}

\subsubsection{Analysis}
\label{subsec:pruning-analysis}
We show that the pruned evaluation $G$ admits diminishing returns under the
reverse-time order $\sigma^{\mathrm{rev}}$ and is subadditive.  These two facts
allow us to apply the directed local-search framework of \citet{pmlr-v202-udwani23a}, giving
Algorithm~\ref{alg:single-unknown-local-one-set} and the $1/4$-approximation in
Theorem~\ref{thm:single-unknown-quarter}.  Details are deferred to
Appendices~\ref{sec:app-unknown-quarter-proofs} and~\ref{sec:app-unknown-quarter-alg}.

\begin{theorem}[Directed Local Search: $1/4$-approximation]
\label{thm:single-unknown-quarter}
Let $S_{\mathrm{alg}}$ be the set returned by Algorithm~\ref{alg:single-unknown-local-one-set},
and let $OPT:=\max_{\Plan}\RevUnk{\Plan}$ denote the optimal expected revenue
over all feasible plans. Then
\[
   \RevUnk{\pi(S_{\mathrm{alg}})}\;\ge\;\tfrac14\,OPT.
\]
\end{theorem}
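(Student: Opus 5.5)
The plan is to reduce the theorem to the directed local-search guarantee of \citet{pmlr-v202-udwani23a}: a monotone, subadditive function that admits a submodular order, maximized over a matroid. The set system is the time-indexed ground set $\CallsT$ with $T$ large enough. Taking $T=\min\{|I|,\Tmax\}$ suffices, since a single-call plan has at most $|I|$ calls and times beyond the support of $N$ contribute nothing. The objective is the pruned evaluation $G$. The constraint is the agent-side partition matroid $\mathcal{M}=\{S\subseteq\CallsT: |\{e\in S: \ag{e}=i\}|\le 1\ \forall i\}$; the one-call-per-time restriction is handled inside $G$. I will establish four facts in turn. First, $G(\emptyset)=0$ and $G$ is monotone. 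Second, $G$ is subadditive. Third, $\sigma^{\mathrm{rev}}$ is a submodular order for $G$ in the sense of Definition~\ref{def:submodular-order}. Fourth, $\max_{S\in\mathcal M}G(S)=OPT$, and the output plan earns at least $G(S_{\mathrm{alg}})$.

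Monotonicity is immediate from the definition of $G$ as a maximum over $A\subseteq S$ with $A\in\mathcal I_{\mathrm{time}}$. For subadditivity, take $S_1,S_2$ and let $A\subseteq S_1\cup S_2$ be an optimal time-feasible set. Split $A$ into $A_1=A\cap S_1$ and $A_2=A\setminus S_1$. Revenue along $\pi(A)$ is a sum over positions $t$ of $q_t\,\pr{e}\,p_e\,Fail(\text{prefix})$. Removing calls from the prefix only raises the survival probability, because independence lets $Fail$ factor as a product. Each term of $\pi(A)$ is therefore dominated by the corresponding term in $\pi(A_1)$ or $\pi(A_2)$, since calls keep their time index and hence their $q_t$. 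This gives $\RevUnk{\pi(A)}\le G(S_1)+G(S_2)$.

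For the matching with $OPT$, map an optimal plan $\pi^\star$ to $S^\star=\{(i_t,r_t,t)\}$. This set lies in $\mathcal M$ and is time-feasible, so $G(S^\star)\ge OPT$. Conversely, any $S\in\mathcal M$ with its maximizing $A$ gives a feasible single-call plan, so $G(S)\le OPT$. The algorithm outputs the pruned set $A$ attaining $G(S_{\mathrm{alg}})$, which Lemma~\ref{lem:envelope-g} computes by backward recursion. The result in \citet{pmlr-v202-udwani23a} then yields $G(S_{\mathrm{alg}})\ge\tfrac14\max_{S\in\mathcal M}G(S)=\tfrac14 OPT$.

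The main obstacle is the submodular-order property. Fix $B\subseteq A$ and $C$ with every element of $C$ after $A$ in $\sigma^{\mathrm{rev}}$. Then each element of $C$ has time at most $t_0$, the earliest time in $A$; if it has time exactly $t_0$, its $p_e$ is at least every $p$ at time $t_0$ in $A$. I need $G(A\cup C)-G(A)\le G(B\cup C)-G(B)$. My approach works through the recursion of Lemma~\ref{lem:envelope-g}. For $t>t_0$, sets $A$ and $B$ agree with $A\cup C$ and $B\cup C$ respectively, so the value-to-go functions $g_A\ge g_B$ are fixed there. Each stage map $x\mapsto\max\{0,\max_e(q_t\,\pr{e}\,p_e+(1-p_e)x)\}$ is convex, piecewise linear, and nondecreasing with slope in $[0,1]$.

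At time $t_0$ I must compare how adding the high-$p$ elements of $C$ changes the value from base values $x_A\ge x_B$. The key inequality is
\[
\max(h_{A}\cup h_{C})(x_A)-\max h_A(x_A)\le \max(h_B\cup h_C)(x_B)-\max h_B(x_B),
\]
where the $h$'s are affine functions of the value-to-go with slopes $1-p_e$, and the $C$-lines have the smallest slopes. The argument for it: at the larger $x_A$, the incumbent maximum of the $A$-lines is at least as high, and lines from $C$, having smaller slope, gain less as $x$ grows. I would verify this by case analysis on which line attains each maximum. For times below $t_0$, only elements of $C$ matter, and both sides propagate through identical stage maps. The remaining step is to show that the gap between the two sides stays ordered under these maps. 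The same convexity and slope-ordering argument shows that a larger starting value yields smaller increments, so the difference can only shrink.
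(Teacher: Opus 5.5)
Your proposal is correct and follows the paper's proof. You show that $G$ is monotone and subadditive, that $\sigma^{\mathrm{rev}}$ is a submodular order for $G$, and that $\max_{S\in\mathcal I_{\mathrm{agent}}}G(S)=OPT$, then invoke the directed local search of \citet{pmlr-v202-udwani23a}. The only difference is that you verify the set form of the submodular order directly, via an upper-envelope comparison at time $t_0$ followed by the fact that $y\mapsto\Phi(y)-y$ is nonincreasing through the earlier stages. The paper instead proves only the singleton form (Lemma~\ref{lem:rev-time-diminish-det}, via Lemma~\ref{lem:select-mono-det}) and leaves the telescoping to sets implicit.

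One caveat, which applies equally to the paper. Your propagation step needs each stage map to include the ``skip'' line $x\mapsto x$, matching the null action $\phi_U(\varnothing)=G_{t+1}(U)$ used in the paper's lemmas, rather than the constant $0$ written in Lemma~\ref{lem:envelope-g} and copied into your stage map. With the literal $\max\{0,\cdot\}$, the empty stages below $t_0$ send $G(A)$ and $G(B)$ to $0$, and the inequality fails: take $A=\{a\}$ at time $2$, $B=\varnothing$, and $C=\{c\}$ at time $1$.
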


\begin{remark}
\label{rem:one-set-local-search}
Although the analysis of Algorithm~\ref{alg:single-unknown-local-one-set} is
carried out in terms of the pruned evaluator $G$, the algorithm need never
evaluate $G(\cdot)$.  Instead, it simply
maintains a feasible set that is already pruned and evaluates its expected
revenue.
\end{remark}

\subsection{General Horizons: A Randomized Algorithm}
\label{sec:structural-relaxation}

In this section, we design a randomized policy for the independent-valuations
model under an arbitrary random horizon.  For any accuracy parameter
\(\varepsilon\in(0,1)\), the policy achieves a \(\tfrac12(1-\varepsilon)\)
approximation.  We describe the algorithm and outline its analysis in three stages.

\paragraph{Stage 1: A mixed-integer upper bound. (Section \ref{sec:MIP-upper-bound})}
We upper-bound the optimal expected revenue using a mixed-integer program with
two sets of decision variables.
The first set are binary variables \(x_e\) indicating whether a time-stamped
call \(e\in\CallsT\) is selected; the feasibility constraints on \(x\) (a
matroid-intersection structure) ensure that any integral solution
\((x_e)_{e\in\CallsT}\) corresponds to a valid call sequence (at most one call
per agent and per time).
The second set are continuous variables \(y_e\ge 0\), where \(y_e\) is
interpreted as the probability that call \(e\) is the successful call.  These variables allow us to express expected revenue as a
linear function of \(y\).

Specifically, we impose only the aggregate constraint \(\sum_e y_e \le 1\) corresponding to at most one successful call, and,
for each call \(e\), the cap on its success probability \(y_e \le p_e x_e\); the program therefore relaxes additional sequential dependencies,
yielding an upper bound \(\OPT_{\mathrm{MILP}}\ge \OPT\)
(Lemma~\ref{lem:time-milp-upper}). Our goal will be to design a
policy whose expected revenue is a constant fraction of \(\OPT_{\mathrm{MILP}}\).

\paragraph{Stage 2: Approximately solving the upper bound (Section \ref{sec:solving-relaxation})}
Rather than solving an LP relaxation, then rounding under matroid-intersection
and budget-type constraints, we give a direct polynomial-time procedure to
compute a near-optimal solution to the mixed-integer upper bound from Stage~1.
The starting point is a knapsack-style structure: once the binary variables
\(x\) are fixed, the remaining optimization over the continuous variables \(y\)
is a fractional knapsack linear program, which is solved greedily by assigning
mass to the selected calls in decreasing density order.
We then show that if the minimum-density call that receives positive mass in an
optimal solution were known, the mixed-integer program can be reduced to a
subproblem that admits an EPTAS.
This leads to a guess-and-solve method: we try polynomially many candidates for
this breakpoint density and keep the best resulting solution, obtaining a
\((1-\varepsilon)\)-approximation to \(\OPT_{\mathrm{MILP}}\).

Algorithm~\ref{alg:time-lowdensity} implements this reduction and, under a
correct guess, returns a near optimal solution to the MILP
(Lemma~\ref{lem:time-alg-ptas}).  Our overall routine Algorithm \ref{alg:relax-attenuate} tries all polynomially
many guesses and selects the best solution.
\paragraph{Stage 3: OCRS-style attenuation (Section \ref{sec:attenuation})}
Stage~2 computes a near-optimal solution \((x,y)\) to the mixed-integer upper
bound. While this solution respects all constraints of the MILP, its objective value reflects an optimistic aggregation of success probabilities that does not fully account for the sequential nature of the execution process. In particular, the relaxation allows success probability mass to be assigned to calls without enforcing that later calls are reached only if earlier calls fail.  For instance, consider two selected calls \(e_1\) then \(e_2\), each of
which is accepted with probability \(1/2\) when offered.  The program may set
\(y_{e_1}=1/2\) and \(y_{e_2}=1/2\).  But in any actual execution,
\(\Pr[e_2\text{ is successful}]=\Pr[e_1\text{ rejects}]\cdot \Pr[e_2\text{ accepts}\mid e_2\text{ is offered}]
= (1-1/2)\cdot (1/2)=1/4\),
so \(e_2\) cannot have success probability \(1/2\).
Thus, while the MILP solution is feasible and meaningful as an upper bound, its objective value cannot in general be realized by directly executing the selected calls in sequence. To bridge this gap, we apply an additional \emph{attenuation} step that modifies the execution of the selected calls—by selectively discarding some of them in a randomized manner—so as to restore consistency with the underlying sequential process. This attenuation preserves a constant fraction of the MILP objective value while producing a valid sequential policy.

A key feature of our formulation is that horizon uncertainty enters only
through the objective coefficients, while the constraints on \((x,y)\) describe
a purely combinatorial call set together with target success probabilities.
This separation enables a contention-resolution--style attenuation argument.
We process calls in increasing time and randomize which selected calls to
attempt so as to preserve a constant fraction of each call's target success
probability.
Concretely, \textsc{Attenuated-Execute} (Algorithm~\ref{alg:half-attenuated})
guarantees that each selected call \(e\) becomes the successful call with
probability at least \(\tfrac12\,y_e\)
(Lemma~\ref{lem:half-attenuated-factored}), yielding in expectation a
\(\tfrac12\)-fraction of the relaxation objective and enabling a direct
comparison to \(\OPT_{\mathrm{MILP}}\).

\subsubsection{The Mixed-integer Upper Bound}
\label{sec:MIP-upper-bound}

%As stated, we now formulate a mixed-integer program that upper-bounds $\OPT$.
Our formulation relaxes the \emph{first-success distribution} induced by any
sequential plan: it introduces variables that allocate probability mass to the
event that a given call is the successful call, and it enforces only the
necessary aggregate constraints.  The program does not encode
\emph{reachability}---that is, the fact that a call can be attempted only if all
earlier calls fail---and therefore discards the sequential coupling between
success events.

Formally, each element $e=(i,r,t)\in\CallsT$ represents offering agent $i$ the
price $r$ at time~$t$.  Let $p_e := \Pr[V_i \ge r]$ denote the acceptance
probability of call $e$, and let $q_t$ denote the probability that the horizon
survives to time $t$ (so $q_{\ti{e}}$ is the survival weight for call $e$).

Feasible call selections must choose at most one call per agent and at most one
call per time step; we encode these requirements via the partition families
\[
\mathcal I_1 := \Bigl\{
S \subseteq \CallsT : 
\sum_{r,t} \mathbf 1\{(i,r,t)\in S\} \le 1\ \ \forall i
\Bigr\}
\qquad
\mathcal I_2 := \Bigl\{
S \subseteq \CallsT :
\sum_{i,r} \mathbf 1\{(i,r,t)\in S\} \le 1\ \ \forall t
\Bigr\}.
\]
We introduce variables $x_e\in\{0,1\}$ and $y_e\ge 0$ for each $e\in\CallsT$,
where $x_e$ indicates whether call $e$ is selected and $y_e$ is the probability
mass assigned to the event that $e$ is the successful call.  The mass variables
satisfy $\sum_{e\in\CallsT} y_e \le 1$ and $y_e \le p_e x_e$ for all $e$.

We formulate the relaxation as
\begin{equation}
\label{eq:time-milp}
\max_{x,y}
\Bigl\{
  \sum_{e\in\CallsT} q_{\ti{e}}\,r_e\,y_e :
  \sum_{e\in\CallsT} y_e \le 1,\;
  0 \le y_e \le p_e\,x_e,\;
  x \in \mathcal I_1 \cap \mathcal I_2
\Bigr\}.
\end{equation}

We note that in contrast to relaxations that incorporate horizon uncertainty
directly into the constraint system by encoding reachability under the random
horizon (e.g., \cite{brubach2023onlinematchingframeworksstochastic}),
the uncertainty in the deadline enters \eqref{eq:time-milp}
only through the objective coefficients \(q_{\ti{e}}\).

Lemma \ref{lem:time-milp-upper} (Proof in Appendix \ref{sec:app-unknown-half-proofs}) states that this relaxation is indeed valid: every feasible
policy induces a feasible solution to the relaxation with the same expected
revenue.

\subsubsection{Solving the Relaxation}
\label{sec:solving-relaxation}
Next, we study the structure of optimal solutions to~\eqref{eq:time-milp}.
Fixing the selection variables \(x\), the remaining optimization over
success mass \(y\) depends on each call only through its time-discounted
price \(q_{\ti{e}}r_e\), which we call its \emph{density}.  Through lemma \ref{lem:breakpoint} (Proof in Appendix \ref{sec:app-unknown-half-proofs}), we show that optimal
solutions have a threshold form in this density order: all positive mass is
assigned to calls above a common cutoff, with at most one call used partially.

\begin{definition}[Density and breakpoint element]
\label{def:time-density}
For each \(e=(i,r,t)\in\CallsT\), define its \emph{density}
$
c_e := q_{\ti{e}}\,r_e .
$
Given a feasible solution \((x,y)\) to~\eqref{eq:time-milp}, denote its
effective support by
$
supp(y) := \{\, e\in\CallsT : y_e>0 \,\}.
$
A call \(g\in supp(y)\) is a \emph{breakpoint element} of \((x,y)\) if
\[
c_g \;=\; \min\{\, c_e : e\in supp(y)\,\}.
\]
We let \(\tau:=c_g\) and call \(\tau\) the \emph{breakpoint density}.
\end{definition}

Before we state and prove lemma \ref{lem:breakpoint}, for technical convenience, we augment \(\CallsT\) with a dummy agent
\(i_\dagger\notin I\) and a terminal dummy time \(t_\dagger\) satisfying
\(q_{t_\dagger}=0\), and include the dummy call
\(e_\dagger:=(i_\dagger,0,t_\dagger)\) with \(p_{e_\dagger}=1\).
This element never conflicts with real calls (it uses a dummy agent and time),
and contributes zero to the objective.  It is used only to ensure that we may
assume that the mass constraint \(\sum_e y_e\le 1\) is tight in an optimal solution. We also assume, that all calls have distinct densities. If ties occur,
we can break them by an arbitrarily small perturbation of posted prices, which
induces a strict total order of densities and changes the objective by an arbitrarily small
additive amount; we therefore restrict attention to the tie-free case.

\begin{lemma}[Breakpoint density-threshold structure]
\label{lem:breakpoint}
In the augmented instance above, there exists an optimal solution
\((x^\star,y^\star)\) to~\eqref{eq:time-milp} with the following properties.

\begin{enumerate}[label=(\roman*),leftmargin=2.5em]
\item {Mass constraint is tight:}
\(\sum_{e\in\CallsT} y^\star_e = 1\).

\item {Density-threshold form:}
Let \(S^\star:=\{e:x^\star_e=1\}\) and order its elements so that
\(c_{e_1}\ge c_{e_2}\ge\cdots\ge c_{e_m}\).
Then there exists an index \(k\in[m]\) such that
\[
y^\star_{e_j}=p_{e_j}x^\star_{e_j}\quad\text{for all }j<k,\qquad
0<y^\star_{e_k}\le p_{e_k}x^\star_{e_k},\qquad
y^\star_{e_j}=0\quad\text{for all }j>k.
\]
Equivalently, letting \(\tau:=c_{e_k}\),
all selected calls with \(c_e>\tau\) are saturated (\(y^\star_e=p_ex^\star_e\)),
all selected calls with \(c_e<\tau\) receive zero mass, and the remaining mass
\(1-\sum_{e\in S^\star:\,c_e>\tau}p_e x^\star_e\) is placed on calls with
\(c_e=\tau\) (in particular, at most one call is partially used if ties are
broken consistently).
\end{enumerate}
\end{lemma}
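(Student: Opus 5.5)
The approach is to start from an arbitrary optimal solution of \eqref{eq:time-milp} and modify only the continuous variables \(y\), keeping \(x^\star\) fixed. Existence of an optimum is immediate. For fixed \(x\), the feasible \(y\)-region is a nonempty compact polytope. The \(x\)-part ranges over the finitely many sets in \(\mathcal I_1\cap\mathcal I_2\). So some pair \((x^\star,y)\) attains the maximum. Fixing \(S^\star:=\{e:x^\star_e=1\}\), the residual problem is
\[
\max\Bigl\{\sum_{e\in S^\star} c_e y_e:\ \sum_{e\in S^\star} y_e\le 1,\ 0\le y_e\le p_e\ (e\in S^\star)\Bigr\},
\]
with \(y_e=0\) off \(S^\star\). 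This is a fractional knapsack LP with unit weights, capacity \(1\), item caps \(p_e\) and values \(c_e\). Replacing \(y\) by any optimal solution of this LP preserves both feasibility and the MILP objective, so it suffices to exhibit a greedy optimum of the LP with the stated form.

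\textbf{Step 1 (tightness).} First we ensure that the dummy call \(e_\dagger\) can be added to \(S^\star\). It uses a fresh agent and a fresh time, so \(S^\star\cup\{e_\dagger\}\in\mathcal I_1\cap\mathcal I_2\). Adding it does not change the objective, since \(c_{e_\dagger}=q_{t_\dagger}\cdot 0=0\). We may therefore assume \(e_\dagger\in S^\star\). If \(\sum_e y_e<1\), we raise \(y_{e_\dagger}\) by the slack \(1-\sum_e y_e\le 1=p_{e_\dagger}\). This keeps feasibility and leaves the objective unchanged, which gives (i).

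\textbf{Step 2 (exchange argument).} Sort \(S^\star\) by density, \(c_{e_1}>c_{e_2}>\dots>c_{e_m}\), using the tie-free assumption. All \(c_e\ge 0\), so \(e_\dagger\), with density \(0\), may be taken last. Suppose \(y\) is optimal with total mass \(1\), and that there are \(j<l\) with \(y_{e_j}<p_{e_j}\) and \(y_{e_l}>0\). Moving mass \(\delta=\min\{p_{e_j}-y_{e_j},\,y_{e_l}\}>0\) from \(e_l\) to \(e_j\) keeps all constraints and the total mass fixed. It changes the objective by \(\delta(c_{e_j}-c_{e_l})\ge 0\), and strictly more when densities differ.

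Optimality therefore forbids such pairs whenever \(c_{e_j}>c_{e_l}\). Alternatively, we can run the exchange repeatedly, using the potential \(\sum_j j\,y_{e_j}\), which strictly decreases, to reach a greedy solution without any loss in objective.

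\textbf{Step 3 (reading off the threshold form).} In the resulting solution there is no "unsaturated earlier, positive later" pair. Let \(k\) be the largest index with \(y_{e_k}>0\); it exists because the total mass is \(1\). Then every \(j<k\) is saturated, \(y_{e_j}=p_{e_j}x^\star_{e_j}\), and every \(j>k\) has \(y_{e_j}=0\). Finally \(0<y_{e_k}\le p_{e_k}\), which is (ii). Setting \(\tau=c_{e_k}\) gives the equivalent formulation, and the remaining mass on \(e_k\) equals \(1-\sum_{c_e>\tau}p_e\) by tightness.

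\textbf{Main obstacle.} The only delicate point is the compatibility between tightness and the threshold structure. The dummy call must be allowed to serve as the partial or breakpoint element when \(\sum_{e\ne e_\dagger}p_e<1\). This works because the dummy call has the smallest density, \(c=0\), and has \(p_{e_\dagger}=1\), so it can absorb any leftover mass while sitting at the bottom of the order. With that observation, the rest is the standard fractional-knapsack argument.
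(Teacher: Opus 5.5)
Your proof is correct and follows the same route as the paper: fix $x^\star$, reduce to a fractional knapsack whose greedy (exchange-argument) optimum has the density-threshold form, and use the zero-density, unit-capacity dummy call to absorb any slack and obtain tightness. Your explicit check that the dummy call can be added feasibly and sits last in the density order, so that (i) and (ii) hold for the same solution, fills in a step the paper leaves implicit.
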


Lemma~\ref{lem:breakpoint} suggests a natural solution strategy to solve \ref{eq:time-milp}: if we knew the
breakpoint density \(\tau\) (equivalently, a breakpoint element \(g\) with
\(\tau=c_g\)), then all calls with density above \(\tau\) are either fully used or irrelevant.

We now introduce the first component of our final algorithm,
Algorithm~\ref{alg:time-lowdensity}, which computes a near-optimal solution to
the relaxation~\eqref{eq:time-milp} under a guessed breakpoint element \(g\).
Given \(g\) (and \(\tau:=c_g\)), the algorithm restricts attention to calls with
\(c_e\ge \tau\), selects an above-threshold feasible set via a budgeted matroid
intersection subroutine, and outputs a feasible pair \((x,y)\) by saturating
the selected calls and placing any remaining mass on \(g\).

\begin{algorithm}[t]
\caption{\textsc{Breakpoint-Guess-MILP}\((g,\varepsilon)\)}
\label{alg:time-lowdensity}
\DontPrintSemicolon
\KwIn{A guessed call \(g\in\CallsT\); accuracy parameter \(\varepsilon\in(0,1)\)}
\KwOut{A feasible pair \((x,y)\) for~\eqref{eq:time-milp}}

Set \(\tau \gets c_g\).\;

\tcp{\textbf{Contract the guessed breakpoint element}}

Let \(\widehat{\CallsT} \gets \{e\in\CallsT : \ag{e}\neq \ag{g},\ \ti{e}\neq \ti{g}\}\).\;
Let \(\widehat{\mathcal I}_1,\widehat{\mathcal I}_2\) be
\(\mathcal I_1,\mathcal I_2\) restricted to \(\widehat{\CallsT}\).\;

\tcp{\textbf{Discard sub-threshold calls}}

Let \(E_{\ge\tau}\gets \{e\in\widehat{\CallsT} : c_e\ge\tau\}\).\;

\tcp{\textbf{Budgeted matroid intersection on the contracted instance}}

For each \(e\in E_{\ge\tau}\) set \(w_e \gets p_e(c_e-\tau)\) and
\(a_e\gets p_e\).\;
Compute
\[
H^\star \in \arg\max\Bigl\{\sum_{e\in H} w_e:
\ H\in\widehat{\mathcal I}_1\cap\widehat{\mathcal I}_2,\ 
H\subseteq E_{>\tau},\
\sum_{e\in H} a_e \le 1
\Bigr\}.
\]

\tcp{\textbf{Construct the solution \((x,y)\): saturate above-threshold items and put leftover mass on $g$}}

Set \(x_g\gets 1\) and \(x_e\gets \mathbf 1\{e\in H^\star\}\) for \(e\neq g\).\;
Set \(y_e\gets p_e\) for all \(e\in H^\star\), and \(y_e\gets 0\) for all other
\(e\neq g\).\;
Set \(y_g \gets \min\{\,p_g,\ 1-\sum_{e\in H^\star} p_e\,\}\).\;

\Return{\((x,y)\)}\;
\end{algorithm}

We emphasize that the optimization over \(H\) in
Algorithm~\ref{alg:time-lowdensity} is a budgeted matroid intersection problem
and is NP-hard, so \(H^\star\) cannot in general be computed exactly.  We
therefore invoke a \((1-\varepsilon)\)-approximation (EPTAS)
subroutine~\cite{DAKS23}(Prior work established a PTAS for this problem \cite{chekuri2012dependent},\cite{BBGS11}); let \(H^{ALG}\) denote its
output, which we compare against the optimal \(H^\star\) for the contracted
instance.

Next, Lemma \ref{lem:time-alg-ptas} (Proof in Appendix \ref{sec:app-unknown-half-proofs}) show that under a correct guess of the breakpoint element \(g\),
Algorithm~\ref{alg:time-lowdensity} produces a solution within a
factor \((1-\varepsilon)\) of the optimal MILP.

\begin{lemma}[Guarantee under a correct breakpoint guess]
\label{lem:time-alg-ptas}
If Algorithm~\ref{alg:time-lowdensity} is run with a breakpoint element \(g\) of
an optimal solution to~\eqref{eq:time-milp}, then its output satisfies
\[
\sum_{e\in\CallsT} c_e\,y^{ALG}_e
\;\ge\;
(1-\varepsilon)\cdot OPT_{MILP},
\]
where \(OPT_{MILP}\) denotes the optimal value of~\eqref{eq:time-milp}.
\end{lemma}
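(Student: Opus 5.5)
The plan is to rewrite the MILP objective around the breakpoint density $\tau:=c_g$, so that it splits into a constant $\tau$ plus exactly the weight $\sum_{e\in H}w_e$ that Algorithm~\ref{alg:time-lowdensity} maximizes. We then compare the optimal solution and the algorithm's output through this identity.

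\textbf{Step 1 (decomposing the optimum).} Take the optimal solution $(x^\star,y^\star)$ given by Lemma~\ref{lem:breakpoint}, whose breakpoint element is the guessed $g$. Its mass constraint is tight, and it has the density-threshold form. Let $H^{\mathrm{opt}}:=\{e\in S^\star: c_e>\tau\}$; every such $e$ is saturated and every call below $\tau$ gets zero mass. Using $\sum_e y^\star_e=1$,
\[
\OPT_{\mathrm{MILP}}=\sum_e c_e y^\star_e=\tau+\sum_e (c_e-\tau)y^\star_e=\tau+\sum_{e\in H^{\mathrm{opt}}}p_e(c_e-\tau)=\tau+w(H^{\mathrm{opt}}),
\]
since the $g$-term has $c_g-\tau=0$.

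\textbf{Step 2 ($H^{\mathrm{opt}}$ is feasible for the contracted problem).} $x^\star\in\mathcal I_1\cap\mathcal I_2$ and $x^\star_g=1$, so no element of $H^{\mathrm{opt}}$ shares an agent or a time with $g$. Hence $H^{\mathrm{opt}}\subseteq E_{>\tau}\subseteq\widehat{\CallsT}$ and $H^{\mathrm{opt}}\in\widehat{\mathcal I}_1\cap\widehat{\mathcal I}_2$. Also $\sum_{e\in H^{\mathrm{opt}}}p_e=1-y^\star_g\le 1$, so the budget constraint holds. Therefore $w(H^\star)\ge w(H^{\mathrm{opt}})$, and by the EPTAS guarantee of \cite{DAKS23}, $w(H^{ALG})\ge(1-\varepsilon)w(H^{\mathrm{opt}})$.

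\textbf{Step 3 (evaluating the output).} The output $(x,y)$ is feasible for \eqref{eq:time-milp}:
\begin{itemize}
\item adding $g$ to $H^{ALG}$ creates no agent or time conflict, by the contraction;
\item $y_e\le p_ex_e$ holds for every $e$;
\item $\sum_e y_e\le 1$ holds because $y_g=\min\{p_g,1-P\}$ with $P:=\sum_{e\in H^{ALG}}p_e\le 1$.
\end{itemize}
Its value is
\[
\sum_e c_ey_e=\tau\,(P+y_g)+w(H^{ALG}).
\]
In the case $y_g=1-P$ the mass is tight, so the value equals $\tau+w(H^{ALG})\ge(1-\varepsilon)(\tau+w(H^{\mathrm{opt}}))=(1-\varepsilon)\OPT_{\mathrm{MILP}}$, and we are done.

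\textbf{Step 4 (main obstacle: $y_g=p_g<1-P$).} This is the step I expect to be hardest. Here the total mass $m:=P+p_g$ is below $1$, and we lose $\tau(1-m)$ relative to the identity in Step 1. In the optimum, by contrast, $y^\star_g\le p_g$ forces $P^{\mathrm{opt}}:=\sum_{e\in H^{\mathrm{opt}}}p_e\ge 1-p_g$.

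The first thing I would try is to add a lower-budget side constraint $\sum_{e\in H}a_e\ge 1-p_g$ to the contracted problem. $H^{\mathrm{opt}}$ satisfies it, so the comparison in Step 2 survives. The difficulty is that the cited EPTAS handles only a packing budget, so this requires either arguing it extends to a two-sided budget or guessing a constant number of the largest-$p_e$ elements.

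The second route is to avoid side constraints and compare directly. Write $\mathrm{val}(H):=\sum_{e\in H}c_ep_e+\tau\min\{p_g,1-P(H)\}$. Then
\[
\mathrm{val}(H)\ge \tau\min\{1,P(H)+p_g\}+w(H),
\qquad
\OPT_{\mathrm{MILP}}=\tau P^{\mathrm{opt}}+\tau y^\star_g+w(H^{\mathrm{opt}}).
\]
One would then exploit the following: elements of $H^{\mathrm{opt}}$ that the algorithm's set lacks, and that carry acceptance mass $p_e$, contribute at least $\tau p_e$ to $\OPT_{\mathrm{MILP}}$ beyond their $w$-weight. A local-exchange argument might recover this shortfall. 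If neither route closes the gap, I would run the EPTAS with the modified weights $w_e=p_ec_e$ on budget-feasible sets and output the better of the two runs. The worst case for the argument, and the one to check carefully, is when $p_g$ is small.
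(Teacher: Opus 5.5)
Your Steps 1--3 reproduce the paper's argument: the identity $\OPT_{\mathrm{MILP}}=\tau+w(H^{\mathrm{opt}})$, feasibility of $H^{\mathrm{opt}}$ in the contracted budgeted problem, and the EPTAS comparison. The case you isolate in Step~4 is a genuine gap, and your proposal leaves it open.

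The paper does not close it either. It covers your Step~4 case only through \eqref{eq:alg-decomp-simple}, i.e.\ $\sum_e c_e y^{ALG}_e\ge\tau+w(H^{ALG})$, justified by sending leftover mass to $g$ ``and, if needed, to the terminal dummy.'' But mass on the dummy is worth $0$, not $\tau$. By your own formula $\sum_e c_ey_e=\tau(P+y_g)+w(H^{ALG})$, that inequality holds (for $\tau>0$) only when $y_g=1-P$, which is the case you already settled.

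In fact the gap cannot be closed, because the lemma is false for Algorithm~\ref{alg:time-lowdensity} as written. Take $N\equiv 2$, breaking density ties infinitesimally as the paper assumes. Let agent $A$ have candidate prices $1.1$ and $6$ with $\Pr[V_A\ge 1.1]=0.9$ and $\Pr[V_A\ge 6]=0.1$. Let agent $B$ have price $1$ with $\Pr[V_B\ge 1]=0.1$.
\begin{itemize}
\item An optimal MILP solution selects $(A,1.1)$ and $(B,1)$ at the two distinct times, both saturated, with value $0.99+0.1=1.09$. Its breakpoint $g$ is the $B$-call, so $\tau=1$.
\item After contraction the only candidates are the calls $(A,1.1)$ and $(A,6)$ at the other time. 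They conflict and have weights $0.09$ and $0.5$. Hence $H^\star$, and any $(1-\varepsilon)$-approximate set for $\varepsilon<0.8$, is the $(A,6)$ call.
\item The output then has $y_g=\min\{0.1,0.9\}=0.1$. Its value is $0.6+0.1=0.7<(1-\varepsilon)\cdot 1.09$ for every $\varepsilon<0.35$, with the remaining $0.8$ units of mass on the dummy.
\end{itemize}

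Consequently your second route, an exchange argument for the unmodified algorithm, cannot succeed. Your first route and the best-of-two fallback both change the algorithm, so at best they prove a corrected lemma.

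The first route is the natural repair. Because the optimum is mass-tight and $y^\star_g\le p_g$, $H^{\mathrm{opt}}$ satisfies $\sum_{e\in H}p_e\ge 1-p_g$. Under that extra constraint every feasible $H$ receives $y_g=1-\sum_{e\in H}p_e$, so the output is mass-tight and your Step~3 computation gives $\tau+w(H)\ge(1-\varepsilon)\OPT_{\mathrm{MILP}}$. What is then missing is an approximation scheme for matroid intersection with both a packing and a covering budget on the weights $p_e$. The cited EPTAS does not provide this. A bicriteria scheme violating both budgets by a $1\pm\varepsilon$ factor would suffice after rescaling $y$, but it must be supplied.

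The fallback is not enough, and neither is maximizing over all guesses in Algorithm~\ref{alg:relax-attenuate}. Take $N\equiv 2$, $\Pr[V_A\ge 1.01]=1$, $\Pr[V_A\ge 2]=0.5$, $\Pr[V_A\ge 52]=0.01$, and $\Pr[V_B\ge 1]=0.6$. Then $\OPT_{\mathrm{MILP}}=1.5$. Yet every guess, and each of your two runs, yields MILP value at most $1.12$, even with $H^\star$ computed exactly. The attenuated policy earns $0.56<\tfrac12\cdot 1.3=\tfrac12\OPT$, so Theorem~\ref{thm:half-eps-approx} inherits the defect.
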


\subsubsection{OCRS-style attenuation}
\label{sec:attenuation}

Although Stage~2 outputs an integral feasible call set, it does not by itself
specify a policy whose realized first-success
probabilities match the prescribed masses \(y\): in any sequential execution,
whether a call is attempted is coupled with the failure of all earlier calls.
We address this by executing the selected calls in increasing time and applying
an OCRS-style attenuation rule that preserves a constant fraction of the target
first-success mass for each call.

\paragraph{Half-attenuation.}
Fix any feasible \((x,y)\) for~\eqref{eq:time-milp} and let
\(\pi(x)=(e_1,\ldots,e_k)\) be the time-ordered sequence induced by \(x\).  For
each \(t\), define \(Y_{t-1}:=\sum_{s<t} y_{e_s}\) and
\(\alpha_{e_t}:=y_{e_t}/p_{e_t}\) (w.l.o.g.\ assume \(p_e>0\)).  Conditional on
no earlier acceptance, we attempt \(e_t\) with probability
$
  \beta_{e_t}
  \;:=\;
  \frac{\tfrac12\,\alpha_{e_t}}{1-\tfrac12\,Y_{t-1}}.
$
This rule follows the contention-resolution conversions of
\citet{alaei2014bayesian} (``magician's problem''), which achieve a
\(\gamma_k\)-fraction of the prescribed ex-ante mass; for the unit-capacity case
\(k=1\), \(\gamma_1\ge \tfrac12\), giving the constant used here.

We now introduce the second component of our final algorithm,
Algorithm~\ref{alg:half-attenuated} (Full Algorithm appears in Appendix \ref{sec:app-unknown-half-alg}), which implements the half-attenuation rule
above to convert a feasible solution \((x,y)\) into a policy that retains a constant factor of the relaxation objective.

\begin{lemma}[Half-attenuated implementation]
\label{lem:half-attenuated-factored}
Fix any feasible \((x,y)\) for~\eqref{eq:time-milp}, and run
Algorithm~\ref{alg:half-attenuated}.  Then for every \(t\in[k]\),
$
\Pr[\text{$e_t$ is the successful call under }\Plan^{\mathrm{ATT}}]
\;=\;
\tfrac12\,y_{e_t}.
$
Moreover, the attempt probabilities \(\beta_{e_t}\)
lie in \([0,1]\).
\end{lemma}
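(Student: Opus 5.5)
We prove both claims together by induction on \(t\), using the invariant
\[
  \Pr[\text{no acceptance among } e_1,\ldots,e_{t-1}] \;=\; 1-\tfrac12\,Y_{t-1}.
\]
Throughout we use three facts. Each selected call belongs to a distinct agent, because \(x\in\mathcal I_1\). The valuations are independent. The attempt coin for \(e_t\) is drawn independently of everything else.

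The base case is immediate. For \(t=1\) we have \(Y_0=0\), so the event of no earlier acceptance has probability \(1\), and the invariant holds.

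For the inductive step, suppose the invariant holds at \(t\). Call \(e_t\) can be the successful call only if three things happen: no earlier call accepted, the coin for \(e_t\) says attempt, and agent \(\ag{e_t}\) accepts.

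\begin{itemize}
\item The no-earlier-acceptance event depends only on the valuations of the agents \(\ag{e_s}\) for \(s<t\) and on the earlier coins.
\item None of the earlier agents is \(\ag{e_t}\), so by independence that event is independent of the event \(\{V_{\ag{e_t}}\ge \pr{e_t}\}\) and of the new coin.
\end{itemize}

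Hence
\[
\Pr[e_t\text{ succeeds}] \;=\; \bigl(1-\tfrac12 Y_{t-1}\bigr)\,\beta_{e_t}\,p_{e_t}
\;=\; \tfrac12\,\alpha_{e_t}\,p_{e_t} \;=\; \tfrac12\,y_{e_t}.
\]
The events "\(e_s\) is the successful call" for different \(s\) are disjoint. Subtracting, the no-acceptance probability through time \(t\) is
\[
1-\tfrac12 Y_{t-1}-\tfrac12 y_{e_t} \;=\; 1-\tfrac12 Y_t,
\]
which closes the induction.

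The main point needing care is that \(\beta_{e_t}\) must lie in \([0,1]\) before the computation above is legitimate, so we verify it within the same inductive step.

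\begin{itemize}
\item \emph{Nonnegativity.} We have \(\alpha_{e_t}\ge 0\). Also \(Y_{t-1}\le \sum_e y_e\le 1\), so the denominator satisfies \(1-\tfrac12 Y_{t-1}\ge \tfrac12>0\). Hence \(\beta_{e_t}\ge 0\).
\item \emph{Upper bound.} The constraint \(y_e\le p_e x_e\) gives \(\alpha_{e_t}\le 1\). The mass constraint gives \(Y_{t-1}+y_{e_t}\le 1\).
\end{itemize}

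Combining these,
\[
\tfrac12\alpha_{e_t} \;\le\; \tfrac12 \;\le\; 1-\tfrac12 Y_{t-1}.
\]
The second inequality holds because \(Y_{t-1}\le 1\). Therefore \(\beta_{e_t}\le 1\), which completes the induction and proves the statement.

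The only place independence of valuations enters is the factorization step for agents. This is exactly why the argument is confined to the independent-values setting. The horizon plays no role here, because \(N\) is independent of \(V\) and the statement concerns success events with the truncation ignored.
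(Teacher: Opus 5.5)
Your proof is correct and follows the paper's argument: the same invariant \(\Pr[\text{no acceptance among } e_1,\ldots,e_{t-1}] = 1-\tfrac12 Y_{t-1}\) proved by induction, and the same bound \(\beta_{e_t}\le 1\) from \(\alpha_{e_t}\le 1\) and \(Y_{t-1}\le 1\). You also spell out why the probability factors (distinct agents from \(x\in\mathcal I_1\), independent valuations, independent coins), which the paper leaves implicit.
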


Our final Algorithm ~\ref{alg:relax-attenuate} is the composition of two routines: our relaxation solver Algorithm~\ref{alg:time-lowdensity} that
computes a near-optimal feasible pair \((x,y)\) for~\eqref{eq:time-milp}, and Algorithm~\ref{alg:half-attenuated} which rounds \((x,y)\). We note that Algorithm \ref{alg:relax-attenuate} is randomized and invokes an EPTAS as a subroutine (via \ref{alg:time-lowdensity}). Consequently, for any fixed
\(\varepsilon>0\), it runs in time polynomial in the input size, up to an
additional multiplicative factor \(f(1/\varepsilon)\), and achieves an
\(\alpha\)-approximation in expectation:
$
  \mathbb{E}\!\left[\RevUnk{\Plan}\right] \;\ge\; \alpha \cdot OPT.
$

\begin{algorithm}[t]
\caption{\textsc{Relax-and-Attenuate}}
\label{alg:relax-attenuate}
\DontPrintSemicolon
\KwIn{Time-indexed call set $\CallsT$ (augmented with $e_\dagger$); parameters $(p_e)_{e\in\CallsT}$, $(q_t)_{t\in[\Tmax]}$; accuracy $\varepsilon\in(0,1)$}
\KwOut{A randomized policy $\Plan^{\mathrm{ALG}}$}

\ForEach{$g\in\CallsT$}{
  $(x^{(g)},y^{(g)}) \gets \textsc{Breakpoint-Guess-MILP}(g)$
  \tcp*{Algorithm~\ref{alg:time-lowdensity}}
}
Let $g^\star \in \arg\max_{g\in\CallsT}\ \Bigl\{\sum_{e\in\CallsT} q_{\ti{e}}\,r_e\,y^{(g)}_e\Bigr\}$.\;
\Return{$\Plan^{\mathrm{ALG}} \gets \textsc{Attenuated-Execute}(x^{(g^\star)},y^{(g^\star)})$}
\tcp*{Algorithm~\ref{alg:half-attenuated}}
\end{algorithm}

Finally Theorem \ref{thm:half-eps-approx} (Proof in Appendix \ref{sec:app-unknown-half-proofs}) states our guarantee.
\begin{theorem}[$\tfrac12(1-\varepsilon)$-approximation]
\label{thm:half-eps-approx}
Let \(\Plan^{\mathrm{ALG}}\) be the randomized policy returned by
Algorithm~\ref{alg:relax-attenuate}. Then
\[
\mathbb{E}\!\left[\RevUnk{\Plan^{\mathrm{ALG}}}\right]
\;\ge\;
\tfrac12(1-\varepsilon)\,\OPT_{\mathrm{MILP}}
\;\ge\;
\tfrac12(1-\varepsilon)\,\OPT.
\]
\end{theorem}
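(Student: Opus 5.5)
The proof chains the three earlier lemmas. The second inequality, $\OPT_{\mathrm{MILP}}\ge\OPT$, is exactly Lemma~\ref{lem:time-milp-upper}: an optimal plan $\pi^\star$ induces a feasible $(x,y)$ whose objective equals $\RevUnk{\pi^\star}$. So the real work is the first inequality. It splits into a solution-quality step (Stage~2) and a rounding step (Stage~3).

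For the solution-quality step, I will apply Lemma~\ref{lem:breakpoint} to the augmented instance. This gives an optimal solution $(x^\star,y^\star)$ with tight mass and threshold form, and hence a breakpoint element $g_0$ with $c_{g_0}=\min\{c_e: e\in supp(y^\star)\}$. Algorithm~\ref{alg:relax-attenuate} loops over every $g\in\CallsT$, including $g_0$ (possibly the dummy $e_\dagger$). Lemma~\ref{lem:time-alg-ptas} then gives
\[
\sum_e c_e\, y^{(g_0)}_e \ge (1-\varepsilon)\OPT_{\mathrm{MILP}}.
\]
Since $g^\star$ maximizes the same objective over all guesses, the selected pair satisfies $\sum_e c_e\, y^{(g^\star)}_e\ge(1-\varepsilon)\OPT_{\mathrm{MILP}}$. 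Algorithm~\ref{alg:time-lowdensity} outputs a feasible pair in every case, so the selected pair is always feasible for~\eqref{eq:time-milp}, and the next step applies to it.

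For the rounding step, I will fix $(x,y)=(x^{(g^\star)},y^{(g^\star)})$ with time-ordered support $e_1,\dots,e_k$, and write the revenue of $\Plan^{\mathrm{ATT}}$ call by call. Call $e_t$ can generate revenue only if (a) it is the successful call and (b) the horizon reaches its time $\ti{e_t}$. The acceptance and attempt randomness is independent of $N$, so
\[
\mathbb{E}[\RevUnk{\Plan^{\mathrm{ATT}}}]=\sum_t r_{e_t}\,q_{\ti{e_t}}\Pr[e_t\text{ successful}].
\]
This step needs care because calls that are skipped by attenuation still occupy their time slots. If the horizon expires earlier, the process stops, and the event ``reach time $t$'' is exactly the survival event with probability $q_t$, independent of $V$ and of the attenuation coins. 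Lemma~\ref{lem:half-attenuated-factored} gives $\Pr[e_t\text{ successful}]=\tfrac12 y_{e_t}$, where success is measured ignoring the horizon. Substituting, the expected revenue is $\tfrac12\sum_e c_e y_e\ge \tfrac12(1-\varepsilon)\OPT_{\mathrm{MILP}}$.

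Finally, the outer expectation over the algorithm's randomness is trivial, since the guess selection is deterministic. The main obstacle I expect is justifying this factorization across the horizon: I must make sure $\Plan^{\mathrm{ATT}}$ is executed on the time-indexed schedule, i.e.\ a skipped call still consumes its time step or its successors keep their designated times, so that $q_{\ti{e}}$ is the correct survival weight. If skipped calls instead shifted later calls earlier, the factor $q$ could only increase, since $q$ is nonincreasing in $t$; the inequality would then still hold.
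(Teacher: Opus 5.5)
Your proposal is correct and follows essentially the same route as the paper: guess the breakpoint element of an optimal MILP solution, invoke Lemma~\ref{lem:time-alg-ptas} together with the maximality of $g^\star$, apply Lemma~\ref{lem:half-attenuated-factored} to get success probability $\tfrac12 y_e$, recover the survival weight $q_{t_e}$ using independence of $N$ and the fact that each call is attempted no later than its designated time (with $q$ nonincreasing), and conclude with Lemma~\ref{lem:time-milp-upper}. Your selection step is slightly cleaner than the paper's: you apply the rounding bound directly to the feasible pair chosen for $g^\star$, whereas the paper says it ``suffices to analyze $(x^{ALG},y^{ALG})$'' from the correct guess.
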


\begin{remark}[Tightness of Analysis]
\label{rem:atten-tight}
Example~\ref{ex:milp-stoch-gap-two} shows that the mixed-integer relaxation
\eqref{eq:time-milp} can overestimate the true optimal expected revenue by a
factor approaching $2$.
Consequently, any rounding procedure that guarantees only a constant fraction of
the MILP objective cannot, in general, achieve a better than $1/2$ approximation
with respect to this benchmark.
\end{remark}

\subsection{Geometric Horizons}
\label{sec:unknown-geom}
A standard way to model time frictions in sequential bargaining is exponential
discounting (e.g., \citealp{fudenberg1985infinite}), which in our formulation is
equivalently captured by a geometric horizon: after each rejection, the process
survives with probability $\rho\in(0,1)$, so the number of available calls $N$ is
geometric.

A key benefit of this memoryless structure is that there exists a single
global execution order that is optimal for \emph{every} feasible subset of
calls.  We adopt the resulting \emph{score order} from
\citet[Theorem~5]{brubach2023onlinematchingframeworksstochastic}.  For each call
$e=(i_e,r_e)\in\Calls$ with success probability $p_e=\Pr[V_{i_e}\ge r_e]$, define
the geometric-horizon score
\[
   \mathrm{Score}_\rho(e)
   := \frac{r_e\,p_e}{\,p_e + (1-p_e)\rho\,}.
\]
The \emph{score order} $\ordscore$ is the total order on $\Calls$ that sorts
calls in nonincreasing $\mathrm{Score}_\rho(e)$, breaking ties deterministically.
For any feasible call set $S\subseteq\Calls$, execution proceeds according to the
restriction of $\ordscore$ to $S$. We note that the validity of this order relies on independence of successes across calls.
Fixing this order induces a clean set-function view: define $F(S)\;:=\;\RevUnk{\Plan(\ordscore,S)}.$
Unlike the deterministic-horizon case, the resulting monotonicity and
diminishing-returns properties need not hold pointwise for each realization of
the valuations and the random horizon.  Nevertheless, we show that $F(\cdot)$ is
monotone and submodular in expectation; the formal lemmas and proofs
appear in Appendix~\ref{sec:app-geom-proofs}. We now state the resulting reduction.

\begin{theorem}[Reduction of the geometric-horizon problem]
\label{thm:geom-reduction}
Under the geometric-horizon single-call model with parameter $\rho\in(0,1)$ and
the global score order $\ordscore$, the optimization problem
\[
   \max_{\substack{S\subseteq\mathcal{C}\\
                   \forall i:\,|\{(i,r)\in S\}|\le 1}}
       \RevUnk{\Plan(\ordscore,S)}
\]
is monotone submodular under a partition matroid constraint; hence continuous
greedy plus pipage rounding yields a $(1-1/e)$-approximation.
\end{theorem}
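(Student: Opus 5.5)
The plan is to derive a closed form for $F$ in which every call contributes a ``discount factor'' and a ``score''. Monotonicity and submodularity then follow from one prefix/suffix marginal formula, in the spirit of \eqref{eq:marginal-prefix-suffix}. Under the geometric horizon, $q_t=\rho^{t-1}$, and valuations are independent. So for any sequence $(e_1,\ldots,e_k)$ we get
\[
\RevUnk{e_1,\ldots,e_k}=\sum_{j=1}^k r_{e_j}p_{e_j}\prod_{s<j} d_{e_s},\qquad d_e:=\rho(1-p_e)\in[0,1).
\]
Equivalently, $\RevUnk{e,S}=r_ep_e+d_e\,\RevUnk{S}$ for any ordered suffix $S$. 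This recursion depends only on $e$ and on the revenue of the suffix, not on the time index, which is exactly where memorylessness enters.

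I would then write $r_ep_e=(1-d_e)\,\sigma_e$, where $\sigma_e:=r_ep_e/(1-\rho+\rho p_e)$. I would check that, up to the paper's convention for when the first survival draw occurs, $\sigma_e$ is $\mathrm{Score}_\rho(e)$, so that $\ordscore$ sorts calls by nonincreasing $\sigma_e$. The closed form then shows that $\RevUnk{S}$ is a combination of the scores in $S$:
\[
\RevUnk{S}=\sum_j \sigma_{e_j}(1-d_{e_j})\prod_{s<j}d_{e_s},
\]
with nonnegative weights summing to $1-\prod_j d_{e_j}\le 1$. Hence $\RevUnk{S}\le\max_{f\in S}\sigma_f$.

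Next I would compute the marginal value of inserting $e$ into a set $A$ executed in score order. Let $P_A$ be the elements of $A$ preceding $e$ in $\ordscore$, let $S_A$ be those following it, and let $D(P):=\prod_{f\in P}d_f$. Then
\[
F(e\mid A)=D(P_A)\bigl(r_ep_e+d_e\RevUnk{S_A}-\RevUnk{S_A}\bigr)=D(P_A)\,(1-d_e)\bigl(\sigma_e-\RevUnk{S_A}\bigr).
\]
Every element of $S_A$ comes after $e$ in score order, so $\RevUnk{S_A}\le\max_{f\in S_A}\sigma_f\le\sigma_e$. Thus $F(e\mid A)\ge 0$, which gives monotonicity. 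Applied to arbitrary sets, this also shows that $\RevUnk{\cdot}$ of a score-ordered set is monotone, and in particular that suffix revenue is monotone in the suffix set.

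For submodularity, take $A\subseteq B$ and $e\notin B$. Since $\ordscore$ is a fixed global order, we have $P_A\subseteq P_B$ and $S_A\subseteq S_B$. This is the key point, and it is exactly what fails for general horizons. It gives $D(P_B)\le D(P_A)$ because the extra factors lie in $[0,1)$. It also gives $\RevUnk{S_B}\ge\RevUnk{S_A}$ by the monotonicity just shown, applied to the score-ordered suffixes. Therefore $0\le\sigma_e-\RevUnk{S_B}\le\sigma_e-\RevUnk{S_A}$. Multiplying these nonnegative, monotonically ordered factors yields $F(e\mid B)\le F(e\mid A)$.

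The step I expect to need the most care is the one where enlargements of $B$ interleave both before and after $e$. There the argument relies on two things: the factorization through the prefix product and the suffix revenue, and the sign condition $\sigma_e\ge\RevUnk{S_B}$ that keeps the product monotone. Both rely on independence, which gives the product form of $Fail(P)$, and on the score order, which is why the properties hold in expectation but not pointwise.

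Finally, the single-call feasibility set $\{S:\forall i,\ |\{(i,r)\in S\}|\le1\}$ is a partition matroid. Applying continuous greedy with pipage rounding \citep{calinescu2011maximizing} to the monotone submodular $F$ then returns $S$ with $F(S)\ge(1-1/e)\max F$. By the optimality of $\ordscore$ for every subset \citep[Theorem~5]{brubach2023onlinematchingframeworksstochastic}, $\max F$ equals $OPT$, which completes the reduction.
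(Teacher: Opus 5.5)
Your proof is correct, and it takes a different route from the paper's.

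\textbf{How the two arguments differ.} The paper gets monotonicity from optimality of $\ordscore$: append $e$ last, which cannot hurt on any sample path, then reorder optimally via Lemma~\ref{lem:geom-score-global}. It proves submodularity only for $B=A\cup\{e'\}$. It splits on whether $e'$ precedes or follows $e$, conditions on acceptance of $e'$ or of $e$, and extracts a factor $\rho$ from memorylessness. You instead reduce both properties to the single identity $F(e\mid A)=D(P_A)(1-d_e)\bigl(\sigma_e-F(S_A)\bigr)$. Monotonicity holds because $F(S_A)$ is a sub-convex combination of scores that are all at most $\sigma_e$. Submodularity for arbitrary $A\subseteq B$ follows in one step from $P_A\subseteq P_B$ and $S_A\subseteq S_B$, with no conditioning.

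\textbf{What your route buys.} First, the structural properties no longer rest on the cited optimality result. You need that result only to identify $\max F$ with $OPT$, and even that follows from your algebra: swapping adjacent $a,b$ changes revenue by $D(P)(1-d_a)(1-d_b)(\sigma_a-\sigma_b)$. Second, it makes explicit exactly which property of the order is used. Third, it handles a point both arguments gloss over. Continuous greedy evaluates $F$ on random sets that may contain several calls to one agent. On such sets your product formula is not the true revenue, since the acceptance events are then correlated. The paper's Case~1 independence step silently assumes distinct agents in the same way. The clean fix is to \emph{define} $F$ on all of $2^{\Calls}$ by your product formula. Your proof shows this extension is monotone submodular everywhere, and it coincides with true revenue on the partition matroid. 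The paper's route stays closer to a pathwise-coupling intuition, at the cost of case analysis and the external lemma.

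\textbf{One correction to a side remark.} The mismatch between $\sigma_e=r_ep_e/(1-\rho+\rho p_e)$ and the displayed $\mathrm{Score}_\rho(e)=r_ep_e/(p_e+(1-p_e)\rho)$ has nothing to do with when the first survival draw occurs. Shifting that draw rescales $F$ by $\rho$ and leaves the index unchanged. Since $1-d_e=p_e+(1-p_e)(1-\rho)$, the two agree only if $\rho$ in the displayed score is read as the per-step exit probability.

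This is not cosmetic. Take survival probability $\rho=0.01$, $e=(r,p)=(10,0.1)$ and $f=(5,1)$. The literal formula puts $e$ before $f$, and then $F(\{e,f\})=1.045<5=F(\{f\})$, so monotonicity fails. State your hypothesis as ``$\ordscore$ sorts by nonincreasing $\sigma_e$.'' That is the order that is optimal for every subset, which is what Lemma~\ref{lem:geom-score-global} asserts and what the paper's own monotonicity proof uses.
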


\section{Correlated Valuations and Unknown Horizons}
\label{sec:IFR}
In this section, we study random-horizon models under \emph{arbitrarily correlated}
valuations.  In the independent-valuation setting, our unknown-horizon algorithms were
driven by additional structure: although the objective is not pointwise submodular, it
admits a reverse-time submodular order, yielding constant-factor approximations even
when the horizon is unknown.  Under arbitrary correlations, this structure collapses.
Example~\ref{ex:submodular-order-failure} shows that reverse time need not be a
submodular order even under a known horizon, and Example~\ref{ex:milp-misleading} shows
that the mixed-integer relaxation used in the independent case can become an arbitrarily
poor upper bound.  Consequently, the two main tools that underpinned our independent
unknown-horizon results do not extend to correlated values.

At this point, our only algorithmic tool that survives correlation is the greedy
algorithm under a known finite horizon, where greedy under the decreasing-price order
achieves a constant-factor approximation.  A natural question is whether one can extend
this guarantee to random horizons by selecting a single deterministic horizon length $t$
(e.g., a quantile of the horizon) and running deterministic greedy with budget $t$.
Example~\ref{ex:no-fixed-quantile} rules out such a ``single time-scale'' reduction for
general horizon distributions: for any fixed quantile level $\alpha\in(0,1)$, there are
instances on which greedy optimized at the $\alpha$-quantile horizon achieves an
arbitrarily small fraction of $OPT$.

The situation improves for horizons with increasing failure rate (IFR).  In this case,
the horizon admits a meaningful representative scale---namely, the median---within which
a constant fraction of the optimal expected revenue is concentrated.  This yields a
median-based reduction: optimizing greedy at the median recovers a constant-factor
approximation even under arbitrary correlations (Section~\ref{sec:IFR}).

For fully arbitrary horizon distributions, however, no such representative scale may exist, and an algorithm must hedge across multiple time scales. We do so by applying our greedy algorithm, originally designed for the deterministic-horizon setting, to the decreasing-order-induced set function associated with each time (t) in the support of the horizon distribution. We then take the best solution among these deterministic greedy solutions. A key bucketing argument shows that only $O(\log M)$ such time scales are needed to capture a constant fraction of the optimal expected revenue. Consequently, the best deterministic greedy solution achieves a $\Theta(1/\log M)$ -approximation.
(Section~\ref{sec:buckets}).

\subsection{IFR Horizons}
\label{sec:unknown-ifr}

We consider an unknown horizon $N$ drawn from an increasing-failure-rate (IFR)
distribution, and analyze a simple \emph{median-based} reduction to the
deterministic-horizon problem.  Let $m$ be the (left) median of $N$, i.e.,
$\Pr[N\ge m]\ge\tfrac12$ and $\Pr[N\ge m{+}1]\le\tfrac12$.
We run the deterministic-horizon $(1-1/e)$-approximation algorithm with budget
$m$ (Corollary~\ref{cor:known-horizon}), obtaining a feasible set $S^{\mathrm{med}}$,
and define the induced plan $\Plan^{\mathrm{med}}:=\pi^{\mathrm{d}}(S^{\mathrm{med}})$.

\begin{theorem}[Median-based algorithm for IFR horizons]
\label{thm:ifr-median-greedy}
The deterministic algorithm applied at the median $m$ achieves a
$\tfrac14(1-\tfrac1e)$-approximation for the stochastic-horizon problem when $N$
is IFR:
\[
  \RevUnk{\Plan^{\mathrm{med}}}
  \;\ge\;
  \tfrac14\Bigl(1-\tfrac1e\Bigr)\,OPT^{\mathrm{IFR}}.
\]
\end{theorem}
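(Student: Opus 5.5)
The plan is to sandwich both $\RevUnk{\Plan^{\mathrm{med}}}$ and $OPT^{\mathrm{IFR}}$ against the optimal deterministic-horizon revenue at the median,
\[
OPT_m:=\max_{S}\RevN{m}{\pi^{\mathrm{d}}(S)},
\]
where the maximum is over feasible (single-call) sets $S$. Since $m$ calls can be attempted only if the horizon survives to step $m$, the factor $\tfrac14(1-\tfrac1e)$ should split as $\tfrac12$ from this survival probability, $(1-\tfrac1e)$ from Corollary~\ref{cor:known-horizon}, and another $\tfrac12$ from a geometric tail bound under IFR.

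\textbf{Lower bound on the algorithm.} We may assume $|S^{\mathrm{med}}|\le m$, since any calls beyond position $m$ add nothing to $\RevN{m}{\cdot}$ and can be discarded. Then, for every valuation realization $\mathbf v$ and every $n\ge m$, we have $\RevV{n}{\Plan^{\mathrm{med}}}{\mathbf v}=\RevV{m}{\Plan^{\mathrm{med}}}{\mathbf v}$. All realized revenues are nonnegative, so
\[
\RevUnk{\Plan^{\mathrm{med}}}\ge \Pr[N\ge m]\,\RevN{m}{\Plan^{\mathrm{med}}}\ge \tfrac12(1-\tfrac1e)\,OPT_m,
\]
using the definition of the left median and Corollary~\ref{cor:known-horizon}. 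This step needs neither independence nor IFR.

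\textbf{Upper bound on $OPT$.} Write $\pi^\star=(e_1,e_2,\dots)$ and partition it into consecutive blocks $B_0,B_1,\dots$ of length $m$, with $B_k$ occupying positions $km+1,\dots,(k+1)m$. For a fixed realization $\mathbf v$ and any $n$, the revenue earned in block $B_k$ is zero unless $n\ge km+1$. When it is nonzero, it is at most $\RevV{\infty}{B_k}{\mathbf v}$, the revenue of running $B_k$ alone: the indicator that the prefix before $B_k$ fails is at most $1$, and this holds for arbitrary correlation because the comparison is pointwise. Since $|B_k|\le m$, Lemma~\ref{lem:price-order-optimal} shows that reordering $B_k$ by decreasing price does not hurt, so in expectation over $V$ we get $\mathbb E_V[\RevV{\infty}{B_k}{V}]\le OPT_m$, with $B_k$ itself feasible. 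Summing over blocks gives
\[
OPT\le \sum_{k\ge0}\Pr[N\ge km+1]\;OPT_m.
\]
The $k=0$ term contributes at most $OPT_m$.

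\textbf{Main obstacle: the IFR tail bound.} It remains to show $\Pr[N\ge km+1]\le 2^{-k}$ for $k\ge 1$, which gives $OPT\le 2\,OPT_m$ and hence $\RevUnk{\Plan^{\mathrm{med}}}\ge\tfrac14(1-\tfrac1e)OPT$. I would derive this from the new-better-than-used property $\Pr[N>a+b]\le\Pr[N>a]\Pr[N>b]$, which gives
\[
\Pr[N>km]\le\Pr[N>m]^k=\Pr[N\ge m+1]^k\le 2^{-k}.
\]
To prove NBU in the discrete setting, write $\Pr[N>a+b]/\Pr[N>a]=\prod_{j=a+1}^{a+b}(1-h_j)$, where $h_j$ is the discrete hazard rate. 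This product is nonincreasing in $a$ because $h_j$ is nondecreasing under IFR, so it is at most its value at $a=0$, namely $\Pr[N>b]$. The step needing most care is aligning the conventions: left median versus $\Pr[N\ge m+1]\le\tfrac12$, and the indexing of $q_t=\Pr[N\ge t]$. If a constant is lost there, I would instead bound the $k=0$ block and the tail separately, still using geometric decay.
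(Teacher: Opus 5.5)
Your proof is correct and has the same skeleton as the paper's. Both proofs sandwich against $\OPT_m$: they take $\tfrac12$ from $\Pr[N\ge m]$ and $(1-\tfrac1e)$ from Corollary~\ref{cor:known-horizon}, and bound $OPT$ by $\sum_{k\ge0}\Pr[N\ge km+1]\,\OPT_m=\mathbb{E}[\lceil N/m\rceil]\,\OPT_m\le 2\,\OPT_m$. The differences are in how the two upper-bound ingredients are obtained.

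\emph{Reduction to $\OPT_m$.} The paper uses $OPT\le\mathbb{E}[\OPT_N]$ together with the scaling lemma $\OPT_{km}\le k\,\OPT_m$ (Lemma~\ref{lem:g-scaling}, proved via submodularity of the decreasing-price objective). You instead cut $\pi^\star$ itself into length-$m$ blocks and bound each block pointwise by its stand-alone revenue. This reaches the same quantity without appealing to submodularity.

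\emph{IFR tail bound.} Here your version is the more careful one. The paper's proof of Lemma~\ref{lem:ifr-blocks} asserts $q(m)\le\tfrac12$, which is the wrong side of the left-median definition: that definition gives $q(m)\ge\tfrac12$. For instance, $N\equiv T$ is IFR with $m=T$ and $q(T)=1$. So the paper's intermediate claim $q(km)\le 2^{-k}$ fails in general. Your use of $\Pr[N>m]=\Pr[N\ge m+1]\le\tfrac12$, combined with the new-better-than-used inequality for $\Pr[N>\cdot\,]$, gives exactly the needed $\Pr[N\ge km+1]\le 2^{-k}$.

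\emph{One assumption to state.} At $a=0$ your product is $\prod_{j=1}^{b}(1-h_j)=\Pr[N>b]/\Pr[N\ge 1]$, so the NBU step uses $\Pr[N\ge 1]=1$. This is the normalization $q_1=1$ that the paper adopts in Section~\ref{sec:buckets}. Without it, $\Pr[N>a+b]\le\Pr[N>a]\Pr[N>b]$ can fail; a geometric $N$ supported on $\{0,1,2,\dots\}$ is a counterexample.
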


The proof of Theorem~\ref{thm:ifr-median-greedy} appears in
Appendix~\ref{sec:app-ifr} and rests on two ingredients.  First, the optimal
deterministic-horizon revenue exhibits diminishing returns, implying a
\emph{scaling} property (Lemma~\ref{lem:g-scaling}): extending the horizon by a
factor $k$ increases the optimum by at most a factor $k$.  Second, IFR implies a
\emph{block tail bound} (Lemma~\ref{lem:ifr-blocks}): the survival probability
across successive blocks of length $m$ decays geometrically, so the expected
number of $m$-blocks is $O(1)$.  Together, these show that revenue obtainable
beyond the median contributes only a constant-factor fraction of
$OPT^{\mathrm{IFR}}$, and hence the median budget captures a constant fraction of
the stochastic-horizon optimum.
\subsection{General Horizons}
\label{sec:buckets}

We now give an approximation algorithm for an arbitrary horizon distribution, specified by survival probabilities $(q_t)_{t=1}^T$. Algorithm ~\ref{alg:best-greedy-horizon} is simple: for every deterministic horizon length $h$ in the support of the survival distribution, it runs the deterministic-horizon greedy algorithm, evaluates the resulting plan under the unknown-horizon objective, and returns the best such plan. We show that the resulting approximation guarantee incurs a tight logarithmic loss in the number of agents. In this section, we prove the positive side: the algorithm obtains an
$\Omega(1/\log M)$ fraction of the optimal unknown-horizon revenue. The
matching lower bound, showing that this logarithmic loss is unavoidable for
this algorithm, is deferred to Appendix~\ref{sec:app-ifr}. The loss is driven by a structural property of the optimum: under fully arbitrary horizon distributions, the optimal expected revenue may be spread across multiple time scales, but a bucketing argument shows that $O(\log M)$ such scales suffice to capture a constant fraction of $OPT$.

For the proof of the logarithmic approximation guarantee, we introduce a
bucketing of time periods by survival probability. This bucketing is used only
in the analysis; the algorithm itself simply tries all deterministic greedy
solutions and returns the one with largest unknown-horizon revenue.

Formally, define
\[
  B_k
  :=
  \{\, t\in[T] : 2^{-(k+1)} < q_t \le 2^{-k} \,\},
  \qquad k=0,1,2,\ldots .
\]
Thus, within each bucket $B_k$, survival probabilities vary by at most a
factor of two. Let $n_k:=|B_k|$. Up to this factor of two, the contribution
from bucket $k$ behaves like a deterministic-horizon objective of length
$n_k$, scaled by $2^{-k}$. This allows us to compare the best contribution
obtainable from bucket $k$ to the value obtained by running the deterministic
greedy routine with horizon length $n_k$.

Small-survival buckets cannot be discarded a priori: even late segments of the
horizon may generate substantial expected revenue if they contain many call
opportunities. Instead, the analysis decomposes $OPT$ into bucket
contributions. We show that only $O(\log M)$ buckets can be \emph{heavy} and
carry a significant fraction of the optimal revenue, while the remaining
\emph{light} buckets contribute only a geometrically decaying tail
(Lemma~\ref{lem:nonignorable-count}). We then show that, for every bucket $k$,
the deterministic greedy solution with horizon length $n_k$ obtains a
constant-factor approximation to the best value attainable from that bucket
(Lemma~\ref{lem:bucket-det}). Since Algorithm~\ref{alg:best-greedy-horizon}
tries the deterministic greedy solution for every horizon length, it is at least
as good as the best of these bucket-induced greedy candidates. Combining these
facts gives the claimed $\Omega(1/\log M)$ approximation.

Let $\pi^{\mathrm{BG}}$ be the plan returned by
Algorithm~\ref{alg:best-greedy-horizon}. The full algorithm appears in
Appendix~\ref{sec:app-ifr}.

\begin{theorem}[Best deterministic greedy has tight logarithmic loss]
\label{thm:best-greedy-finite}
The worst-case approximation loss of Algorithm~\ref{alg:best-greedy-horizon}
is $\Theta(\log M)$. Equivalently, on every instance,
\[
  \RevUnk{\pi^{\mathrm{BG}}}
  \;\ge\;
  \Omega\!\left(\frac{1}{\log M}\right)\cdot OPT,
\]
and there is a family of instances for which Algorithm~\ref{alg:best-greedy-horizon}
obtains only an
\[
  O\!\left(\frac{1}{\log M}\right)
\]
fraction of the optimal unknown-horizon revenue.
\end{theorem}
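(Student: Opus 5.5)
The lower bound $\Omega(1/\log M)$ follows by decomposing $OPT$ over the survival-probability buckets $B_k$, bounding each bucket's contribution by a deterministic-horizon optimum, and showing that only $O(\log M)$ buckets can matter. Write $D(n):=\max_S \RevN{n}{\pi^{\mathrm d}(S)}$ for the optimal known-horizon revenue with budget $n$. Then:

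\begin{itemize}
\item[(a)] Since a single-call plan has at most $M$ calls, only times $t\le M$ matter, and $q_t$ is nonincreasing in $t$.
\item[(b)] Write $OPT=\sum_t q_t a_t$, where $a_t$ is the expected revenue collected by $\pi^\star$ at time $t$, and set $OPT_k:=\sum_{t\in B_k} q_t a_t$.
\item[(c)] Fix $k$ and remove every call of $\pi^\star$ outside $B_k$. For every valuation realization, the indicator ``all earlier calls reject and this call accepts'' can only increase when earlier calls are deleted. Hence $\sum_{t\in B_k} a_t\le D(n_k)$, and so $OPT_k\le 2^{-k}D(n_k)$.
\end{itemize}

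Now consider the candidate that Algorithm~\ref{alg:best-greedy-horizon} builds with horizon $h=n_k$. By Corollary~\ref{cor:known-horizon} it has deterministic revenue at least $(1-1/e)D(n_k)$. Its calls all occupy times $t\le n_k\le \max B_k$, and for those times $q_t\ge q_{\max B_k}>2^{-(k+1)}$. Hence its unknown-horizon revenue is at least $2^{-(k+1)}(1-1/e)D(n_k)\ge \tfrac12(1-1/e)\,OPT_k$; this is the content of Lemma~\ref{lem:bucket-det}. Consequently $\RevUnk{\pi^{\mathrm{BG}}}\ge \tfrac12(1-\tfrac1e)\max_k OPT_k$.

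It remains to show that $\max_k OPT_k=\Omega(OPT/\log M)$, which is Lemma~\ref{lem:nonignorable-count}. Let $k_0$ be the bucket containing $t=1$. Offering the single best call at time $1$ gives $OPT\ge q_1D(1)\ge 2^{-(k_0+1)}D(1)$. By the scaling property (Lemma~\ref{lem:g-scaling}), $D(n)\le nD(1)\le MD(1)$. So for $L:=\lceil\log_2 M\rceil+3$,
\[
\sum_{k\ge k_0+L}OPT_k\le \sum_{k\ge k_0+L}2^{-k}MD(1)\le 2^{1-k_0-L}MD(1)\le \tfrac12\,OPT .
\]
Therefore the at most $L$ buckets $k_0,\dots,k_0+L-1$ carry at least $OPT/2$, and one of them carries at least $OPT/(2L)$. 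This gives $\RevUnk{\pi^{\mathrm{BG}}}\ge \frac{1-1/e}{4L}OPT=\Omega(OPT/\log M)$.

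For the matching upper bound, I would build an instance spread over $\log M$ scales. The horizon has $q_t\approx 2^{-j}$ for $t\in(2^{j-1},2^j]$. There are groups $j=1,\dots,\log M$, with group $j$ containing $2^{j-1}$ agents. The valuations are correlated through a common latent scale variable, so that group-$j$ agents accept their offered price only on a scale-$j$ event. Prices are tuned so that the value group $j$ earns when placed in its own time window is of the same order for every $j$. Then $OPT=\Theta(\log M)$, achieved by interleaving groups into their matching windows. Any decreasing-price greedy plan, however, front-loads whichever group has the highest price and pushes the other groups into windows where the survival probability is too small. The goal is to show that every candidate earns only $O(1)$. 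This construction is the main obstacle: the prices and acceptance events must be chosen so that the decreasing-price order, and not merely the choice of budget $h$, provably misallocates groups for every $h$. I would first try taking prices increasing in $j$, so that greedy ordering places large late groups first, and then verify the $O(1)$ bound separately for each $h$ by summing geometric terms.
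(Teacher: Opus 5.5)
Your lower-bound half is correct and follows the paper's strategy. You decompose $OPT$ over the survival buckets $B_k$, compare each bucket with the horizon-$n_k$ greedy candidate, and show that only $O(\log M)$ buckets matter. In place of the heavy/light classification you use a direct geometric tail bound from $OPT\ge q_1D(1)$ and $D(n)\le M\,D(1)$, which is shorter.

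Your per-bucket step is not literally Lemma~\ref{lem:bucket-det}, and you should keep your version rather than cite that lemma. You use deletion to get $OPT_k\le 2^{-k}D(n_k)$, and then lower-bound the \emph{whole} unknown-horizon revenue of the $h=n_k$ candidate by $2^{-(k+1)}(1-1/e)D(n_k)$. The lemma instead lower-bounds $V_k(\pi_k)$. That quantity can be zero for $k\ge1$, because $\pi_k$ occupies only times $1,\dots,n_k$, which need not meet $B_k$. Your formulation is the one that actually holds.

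The genuine gap is the second half of the theorem: exhibiting instances on which Algorithm~\ref{alg:best-greedy-horizon} gets only an $O(1/\log M)$ fraction. You give no instance, and the regime you sketch cannot produce a $\log M$ gap. Your windows put survival $2^{-j}$ on $2^{j-1}$ slots, i.e.\ $q_t=\Theta(1/t)$.

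Suppose first that the scale event selects a single acceptor, so acceptances are mutually exclusive. Then every sequence $\sigma$ earns exactly $\sum_t q_t w_{\sigma_t}$ with $w_e:=r_ep_e$. The known-horizon objective is modular, so the horizon-$h$ candidate is the $h$ largest values $w_{(1)}\ge\dots\ge w_{(h)}$ placed at times $1,\dots,h$. It therefore earns at least $w_{(h)}\sum_{t\le h}q_t=\Omega(w_{(h)}\log h)$.
\begin{itemize}
\item With symmetric groups as you describe, equal per-window contributions force every $w=\Theta(1)$. Then already the $h=M$ candidate earns $\Theta(\log M)$ in any order.
\item In general, making every candidate $O(1)$ forces $w_{(t)}=O(1/\log t)$. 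By rearrangement, $OPT=\sum_t q_tw_{(t)}=O\bigl(\sum_t 1/(t\log t)\bigr)=O(\log\log M)$.
\end{itemize}
If instead a whole group accepts together on its scale event, each group is one effective call. Then $D(n)\le J\,D(1)$ for $J=O(\log M)$ groups, and your own lower-bound argument gives ratio $\Omega(1/\log\log M)$.

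The missing idea is the scale. Survival and per-offer revenue must both decay like $t^{-1/2}$, and prices must increase with the index so that decreasing-price execution anti-sorts the offers. The paper's Example~\ref{ex:best-greedy-tight} uses:
\begin{itemize}
\item $M=T$ and one offer $c_i$ per buyer, at price $\Lambda i$;
\item mutually exclusive states $s_i$ of probability $1/(\Lambda i^{3/2})$, in which only buyer $i$ has value $\Lambda i$;
\item survival $q_t=1/\sqrt t$.
\end{itemize}
Then $w_i=1/\sqrt i$ and $OPT=\sum_i q_iw_i=H_M$. The horizon-$h$ candidate selects $c_1,\dots,c_h$ but executes them as $c_h,\dots,c_1$, earning $\sum_{i=1}^h (i(h+1-i))^{-1/2}\le 4$ for every $h$. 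In bucket terms, $B_k$ now has $\Theta(4^k)$ slots rather than $2^k$. That is what lets per-offer revenue decay while every bucket still contributes $\Theta(1)$ to $OPT$.
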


Let $\pi^\star$ be an optimal policy and let
\[
  OPT
  :=
  \RevUnk{\pi^\star}
  =
  \sum_{t=1}^{T}
  q_t\Big(\RevN{t}{\pi^\star}-\RevN{t-1}{\pi^\star}\Big),
  \qquad
  \RevN{0}{\pi}:=0.
\]
For each bucket $k$, define its contribution under $\pi^\star$ by
\[
  V_k(\pi^\star)
  :=
  \sum_{t\in B_k}
  q_t\Big(\RevN{t}{\pi^\star}-\RevN{t-1}{\pi^\star}\Big),
  \qquad\text{so that}\qquad
  OPT
  =
  \sum_k V_k(\pi^\star).
\]
Define the best first-bucket value
\[
  V_0^\star
  :=
  \max_{\pi}
  \sum_{t\in B_0}
  q_t\Big(\RevN{t}{\pi}-\RevN{t-1}{\pi}\Big).
\]
We may assume $q_1=1$ without loss of generality, since the horizon can be
normalized to survive to the first offer. Hence $1\in B_0$ and
$B_0\neq\emptyset$.

Fix $\varepsilon\in(0,1)$. We call bucket $k$ \emph{heavy} if
\[
  V_k(\pi^\star)
  \;\ge\;
  (1-\varepsilon)^k V_0^\star,
\]
and \emph{light} otherwise. Let $\mathcal{K}_{\mathrm{heavy}}$ denote the set
of heavy bucket indices. The choice of $\varepsilon$ affects only constants,
so we fix an arbitrary constant $\varepsilon\in(0,1)$ for the remainder of the
analysis.

We first show that the number of buckets that can be large relative to the
first bucket is logarithmic in the maximum plan length $M$.

\begin{lemma}[Number of heavy buckets]
\label{lem:nonignorable-count}
\[
  |\mathcal{K}_{\mathrm{heavy}}|
  \;\le\;
  1+\left\lceil \log_{\,2-2\varepsilon} M \right\rceil
  \;=\;
  O(\log M).
\]
\end{lemma}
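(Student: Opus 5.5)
The plan is to compare every bucket contribution $V_k(\pi^\star)$ to a single quantity, the best one-offer revenue
\[
R_1:=\max_{e\in\Calls} r_e\,p_e \;=\;\max_{\pi}\RevN{1}{\pi}.
\]
We will show $V_k(\pi^\star)\le 2^{-k}M R_1$ and $V_0^\star\ge R_1$. Then heaviness of bucket $k$ forces $(1-\varepsilon)^k\le 2^{-k}M$, that is, $(2-2\varepsilon)^k\le M$. This leaves only $k\in\{0,1,\dots,\lfloor\log_{2-2\varepsilon}M\rfloor\}$, which gives the claimed count. If $R_1=0$ then $OPT=0$ and there is nothing to prove, so we assume $R_1>0$.

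\textbf{Step 1: lower bound on $V_0^\star$.} We have assumed $q_1=1$, so $1\in B_0$. Consider the plan consisting of the single best call $e$, executed at time $1$. All of its increments $\RevN{t}{\pi}-\RevN{t-1}{\pi}$ vanish for $t\ge 2$. Hence its $B_0$-value is $q_1 r_e p_e=R_1$, and therefore $V_0^\star\ge R_1$.

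\textbf{Step 2: upper bound on $V_k(\pi^\star)$.} Since $q_t$ is nonincreasing, each bucket $B_k$ is a contiguous interval of times. On $B_k$ we have $q_t\le 2^{-k}$, so $V_k(\pi^\star)\le 2^{-k}\sum_{t\in B_k}\bigl(\RevN{t}{\pi^\star}-\RevN{t-1}{\pi^\star}\bigr)$. Each increment equals $\mathbb{E}\bigl[r_t\mathbf 1\{V_{i_t}\ge r_t\}\prod_{s<t}\mathbf 1\{V_{i_s}<r_s\}\bigr]$, which is at most $r_t p_{i_t}(r_t)\le R_1$. This crude bound holds even under arbitrary correlation, and it is the only property of the valuations we use.

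The plan $\pi^\star$ is single-call, so it contains at most $M$ calls. Hence at most $\min(n_k,M)\le M$ of the nonzero increments lie in $B_k$. This gives $V_k(\pi^\star)\le 2^{-k}M R_1$.

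A slightly more refined alternative for this step would be to drop the prefix-failure factors only for times before $B_k$. The bucket sum is then at most the known-horizon revenue of the subplan restricted to $B_k$, which in turn is at most $\min(n_k,M)R_1$ by subadditivity, in the spirit of Lemma~\ref{lem:g-scaling}. Either route yields the same bound.

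\textbf{Step 3: counting.} Combining the two steps, a heavy bucket satisfies $(1-\varepsilon)^k R_1\le(1-\varepsilon)^kV_0^\star\le V_k(\pi^\star)\le 2^{-k}MR_1$. This gives $(2-2\varepsilon)^k\le M$, and since $2-2\varepsilon>1$ for $\varepsilon\in(0,1)$, we get $k\le\log_{2-2\varepsilon}M$. Hence $|\mathcal K_{\mathrm{heavy}}|\le 1+\lfloor\log_{2-2\varepsilon}M\rfloor\le 1+\lceil\log_{2-2\varepsilon}M\rceil=O(\log M)$.

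The main obstacle I expect is Step 2. We need a bound on bucket contributions that does not depend on the bucket's length $n_k$, which may be huge. The fix is to cap the number of useful times at the plan length $M$, and to bound per-call revenue by $R_1$ without invoking any independence. It must also be checked that $\varepsilon<1$ keeps the logarithm base above $1$.
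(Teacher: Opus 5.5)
Your proof is correct for the range of $\varepsilon$ where the statement makes sense (see the second paragraph). It shares the paper's skeleton: play the $2^{-k}$ cap on $q_t$ over $B_k$ against the $(1-\varepsilon)^k$ heaviness threshold, paying an $M$-fold loss from plan length. The execution differs.

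The paper's chain runs as follows. It bounds $V_k(\pi^\star)\le 2^{-k}\RevN{n_k}{\pi^\star}$. It then bounds $\RevN{n_k}{\pi^\star}\le\max_\pi\RevN{M}{\pi}\le M\max_\pi\RevN{1}{\pi}$ via Lemma~\ref{lem:g-scaling}, hence via submodularity of the decreasing-price objective. Finally it relates this to $V_0^\star$ through $\RevN{n_0}{\pi_0^\star}\le 2V_0^\star$, ending at $(2-2\varepsilon)^k\le 2M$.

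You instead bound every increment by $R_1$ directly, using neither independence nor submodularity. You get $V_0^\star\ge R_1$ from the one-call plan. So you lose no factor $2$ and land exactly on $(2-2\varepsilon)^k\le M$, which is what the stated $1+\lceil\log_{2-2\varepsilon}M\rceil$ needs.

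Your route also avoids two links in the paper's chain that do not hold as written.
\begin{itemize}
\item $V_k(\pi^\star)\le 2^{-k}\RevN{n_k}{\pi^\star}$ can fail under correlation: the increments over a later interval $B_k$ can exceed those of the first $n_k$ positions of $\pi^\star$. Your restricted-subplan variant, compared against $\max_\pi\RevN{n_k}{\pi}$, is the valid form.
\item $\max_\pi\RevN{n_0}{\pi}\le\RevN{n_0}{\pi_0^\star}$ is argued in the wrong direction, since $\pi_0^\star$ maximizes $V_0$ rather than $\RevN{n_0}{\cdot}$. It holds only up to a factor $2$.
\end{itemize}

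One correction: $2-2\varepsilon>1$ requires $\varepsilon<1/2$, not merely $\varepsilon<1$. For $\varepsilon\ge 1/2$, the inequality $(2-2\varepsilon)^k\le M$ constrains nothing, and the stated right-hand side is undefined or at most $1$. The lemma is then false. In Example~\ref{ex:best-greedy-tight}, every full bucket $B_k=\{4^k,\dots,4^{k+1}-1\}$ has $V_k(\pi^\star)\ge\ln 4$. Meanwhile $(1-\varepsilon)^kV_0^\star\le 2^{-k}\cdot\tfrac{11}{6}<\ln 4$ for $k\ge 1$, so $\Theta(\log M)$ buckets are heavy. The paper's ``fix $\varepsilon\in(0,1)$'' has the same slip, so read the lemma with $\varepsilon\in(0,1/2)$, where your argument goes through.

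Likewise, if $R_1=0$ then $V_0^\star=0$ and every bucket is heavy, since $0\ge 0$. So the lemma fails there rather than holding vacuously. This is harmless only because $OPT=0$ makes the downstream theorem trivial; the paper's proof tacitly divides by the same quantity.
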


Next, we relate each bucket objective to a deterministic-horizon objective.
Within a bucket, the weights $q_t$ are within a factor of two; hence optimizing
the bucket contribution $V_k(\cdot)$ is equivalent up to constants to
optimizing an unweighted deterministic-horizon objective over $n_k=|B_k|$
steps, for which greedy gives a $(1-1/e)$ guarantee.

For every nonempty bucket $B_k$, let
\[
  \pi_k := \textsc{Greedy}(\mathcal{C},n_k),
  \qquad n_k:=|B_k|.
\]

\begin{lemma}[Deterministic approximation within a bucket]
\label{lem:bucket-det}
Let $B_k$ be a nonempty bucket and let $n_k:=|B_k|$. Let
$\pi_k=\textsc{Greedy}(\mathcal{C},n_k)$ be the deterministic greedy plan for
horizon length $n_k$. Then
\[
  V_k(\pi_k)
  \;\ge\;
  \frac12\Bigl(1-\frac1e\Bigr)\cdot \max_{\pi} V_k(\pi).
\]
\end{lemma}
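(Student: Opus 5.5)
The plan is to sandwich both sides of the inequality between multiples of the deterministic-horizon optimum for budget $n_k$, using that survival weights inside $B_k$ lie in $(2^{-(k+1)},2^{-k}]$. Since $q_t$ is nonincreasing in $t$, each bucket is a contiguous block $B_k=\{a_k,a_k+1,\ldots,a_k+n_k-1\}$. I read $V_k(\pi_k)$ as the bucket-$k$ contribution of the greedy plan when its $n_k$ calls occupy the slots of $B_k$ in order, with no calls before $a_k$. For $k=0$ we have $a_0=1$, so this is literally $\pi_k$ as returned. For $k\ge 1$ this reading is necessary: if $\pi_k$ were simply run from time $1$, its increments $\RevN{t}{\pi_k}-\RevN{t-1}{\pi_k}$ would all vanish for $t\ge a_k>n_k$. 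Let $\OPT_{n}:=\max_{S}\RevN{n}{\pi^{\mathrm d}(S)}$ denote the known-horizon optimum with budget $n$.

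\emph{Upper bound on $\max_\pi V_k(\pi)$.} Fix any feasible plan $\pi$. The increment $\RevN{t}{\pi}-\RevN{t-1}{\pi}$ equals $\mathbb{E}_V$ of $r_t\mathbf 1\{V_{i_t}\ge r_t\}$ times the indicator that all earlier calls rejected. Dropping the rejection requirement for calls outside $B_k$ only enlarges the indicator, even under arbitrary correlation, because it is a product of $\{0,1\}$ factors. Hence
\[
V_k(\pi)\;\le\;2^{-k}\sum_{t\in B_k}\bigl(\RevN{t}{\pi}-\RevN{t-1}{\pi}\bigr)\;\le\;2^{-k}\,\RevN{\infty}{\pi|_{B_k}},
\]
where $\pi|_{B_k}$ is the subsequence of calls that $\pi$ places at times in $B_k$. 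This subsequence has at most $n_k$ calls and uses each agent at most once. By Lemma~\ref{lem:price-order-optimal}, reordering it by decreasing price does not lower its revenue, so $\RevN{\infty}{\pi|_{B_k}}\le \OPT_{n_k}$. Therefore $\max_\pi V_k(\pi)\le 2^{-k}\OPT_{n_k}$.

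\emph{Lower bound on $V_k(\pi_k)$.} With $\pi_k$'s calls at the slots of $B_k$ and nothing earlier, the increments over $B_k$ telescope to $\RevN{n_k}{\pi_k}$, and every weight satisfies $q_t>2^{-(k+1)}$. So $V_k(\pi_k)\ge 2^{-(k+1)}\RevN{n_k}{\pi_k}$. By Corollary~\ref{cor:known-horizon}, applied to the monotone submodular reduction of Theorem~\ref{thm:known-horizon-reduction}, $\RevN{n_k}{\pi_k}\ge(1-1/e)\OPT_{n_k}$. Chaining the two bounds gives
\[
V_k(\pi_k)\;\ge\;2^{-(k+1)}(1-1/e)\OPT_{n_k}\;\ge\;\tfrac12(1-1/e)\max_\pi V_k(\pi),
\]
which is the claimed inequality.

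The main obstacle is the upper-bound step. It relies on the pointwise domination of the reach event by the event that only the bucket's own earlier calls rejected, which is where correlation could in principle cause trouble; it does not, since removing factors from a product of indicators can only increase it. The remaining concern is the convention for $V_k(\pi_k)$. I would state the slot-placement convention explicitly in the write-up, because the later use in Theorem~\ref{thm:best-greedy-finite} needs it: the algorithm must also be allowed the candidates in which $\pi_k$'s calls are placed at the slots of $B_k$, not just $\pi_k$ run from time $1$.
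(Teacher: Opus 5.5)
Your proof has the same skeleton as the paper's: bound $V_k$ between $2^{-(k+1)}$ and $2^{-k}$ times a budget-$n_k$ deterministic revenue, then apply Corollary~\ref{cor:known-horizon}. The paper, however, asserts the per-plan sandwich $2^{-(k+1)}\RevN{n_k}{\pi}\le V_k(\pi)\le 2^{-k}\RevN{n_k}{\pi}$ for every plan $\pi$ run from time $1$. That sandwich is valid only for $k=0$, where $B_0=\{1,\ldots,n_0\}$.

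For $k\ge1$, the unweighted increments over $B_k$ telescope to $\RevN{\max B_k}{\pi}-\RevN{a_k-1}{\pi}$, with $a_k:=\min B_k$, so both per-plan inequalities can fail:
\begin{itemize}
\item The upper one fails for a plan whose first $n_k$ calls always reject but which has a profitable call at time $\max B_k$ (note $\max B_k>n_k$ when $k\ge1$).
\item The lower one fails because $V_k(\pi_k)=0$ whenever $a_k>n_k$.
\end{itemize}
Your upper bound proves exactly the maximized inequality $\max_\pi V_k(\pi)\le 2^{-k}\OPT_{n_k}$ that is actually needed. Your argument for it is correct: you drop the rejection indicators of calls outside $B_k$, which is safe under arbitrary correlation, and then reorder via Lemma~\ref{lem:price-order-optimal}. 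You are also right that the statement cannot hold literally for $\pi_k$ executed from time $1$.

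Your repair of the lower bound is heavier than necessary, though. A plan with idle slots before $a_k$ is not an object of the model, since plans are sequences executed from time $1$; you would have to pad with surely-rejected offers to unused agents. Moreover, because $q$ is nonincreasing, shifting a sequence later can only lower its unknown-horizon revenue. So adding slot-placed candidates to Algorithm~\ref{alg:best-greedy-horizon} buys nothing.

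The simpler fix is to lower-bound the full revenue of $\pi_k$ as returned. Since $|\pi_k|\le n_k\le\max B_k$, every call of $\pi_k$ is made at some time $t\le n_k$, and there $q_t\ge q_{\max B_k}>2^{-(k+1)}$. Hence
$\RevUnk{\pi_k}\ge 2^{-(k+1)}\RevN{n_k}{\pi_k}\ge 2^{-(k+1)}(1-1/e)\OPT_{n_k}\ge\tfrac12(1-1/e)\max_\pi V_k(\pi)$.
The proof of Theorem~\ref{thm:best-greedy-finite} only passes through $\RevUnk{\pi^{\mathrm{BG}}}\ge\RevUnk{\pi_k}$, and $q_{n_k}>0$ makes $h=n_k$ an eligible candidate. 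So this version, with $\RevUnk{\pi_k}$ in place of $V_k(\pi_k)$, suffices, and the algorithm can stay exactly as written.
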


Finally, the logarithmic dependence in
Theorem~\ref{thm:best-greedy-finite} is tight for
Algorithm~\ref{alg:best-greedy-horizon}. In
Example~\ref{ex:best-greedy-tight}, we construct a family of instances for
which every deterministic greedy solution has only constant unknown-horizon
revenue, while the optimal unknown-horizon policy obtains $\Theta(\log M)$
revenue. Hence the best deterministic greedy solution obtains only an
$O(1/\log M)$ fraction of optimum on these instances.

\section{Final Remarks and Open Questions}
In summary, this paper develops algorithmic tools that expose and
exploit structure in sequential posted pricing with deadlines across several
regimes.  The most compelling open question is whether one can obtain a
constant-factor approximation for arbitrary random horizons under correlated
valuations.  Our results show that techniques that succeed under independence do
not directly extend, but they do not rule out a different approach in this
general regime.

A second open question is to identify which forms of correlation arise
naturally and which of them admit efficient approximations.  In particular,
when correlation is induced solely by multiple calls to the same buyer, so
valuations are independent across buyers but a buyer’s accept and reject
outcomes across prices are coupled through a single latent value, does the
problem admit a constant-factor approximation under arbitrary time horizons?
More broadly, can one show that this within-buyer correlation captures the main
algorithmic difficulty?  Understanding which correlations are fundamentally
hard would sharpen the boundary between tractable and intractable sequential
pricing under deadlines.

\ACKNOWLEDGMENT{
The authors thank Balasubramanian Sivan, Steven Delong, and Benjamin Miller for helpful discussions on the motivation of this work.
}

\bibliography{References}

\section{Appendix - Section \ref{sec:model}}
\label{sec:app-model}
This appendix contains a formal proof for the results in Section~\ref{sec:model} as well as the formal computational model for the problem.

\refstepcounter{subsection}
\subsection*{Formal Proofs}
\label{sec:app-model-proofs}

\begin{lemma}[Multi-call as a special case of correlated values]
\label{lem:multicall-special-case}
The multi-call model is a special case of the correlated-values model with at
most one call per agent.
\end{lemma}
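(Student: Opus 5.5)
The idea is to build, from any multi-call instance, a single-call correlated instance whose plans and revenues match exactly. Fix a multi-call instance with agent set \(I\), values \(V=(V_i)_{i\in I}\), and horizon \(N\). Since we only need calls that could ever be used, we work with the finite price menus \(M_i\) (or with any finite set of prices, because a plan is a finite sequence).

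\textbf{Construction.} For each agent \(i\) and each price \(r\in M_i\), introduce a virtual agent \((i,r)\) whose value is
\[
W_{(i,r)} := r\cdot\mathbf 1\{V_i\ge r\}.
\]
We could instead use \(W_{(i,r)}:=V_i\), a copy of the same value. Either way the joint law of \(W\) is induced by the law of \(V\), so it is an arbitrary correlated distribution on the new agent set \(I'=\{(i,r)\}\). We keep \(N\) unchanged, and \(N\) remains independent of \(W\) because \(W\) is a function of \(V\).

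\textbf{Mapping plans.} Take a multi-call plan \(\pi=((i_1,r_1),\dots,(i_m,r_m))\). Without loss of generality no identical call \((i,r)\) repeats: a repeated call is certainly rejected if reached, since the earlier identical call was rejected. Removing it therefore can only shift later calls earlier, which does not lower revenue under any horizon. With repeats removed, \(\pi\) maps to the single-call plan \(\pi'=(((i_1,r_1),r_1),\dots)\), which offers price \(r_t\) to virtual agent \((i_t,r_t)\). Pointwise in \(\mathbf v\), we have \(\mathbf 1\{W_{(i_t,r_t)}\ge r_t\}=\mathbf 1\{v_{i_t}\ge r_t\}\) when \(r_t>0\); zero prices give zero revenue either way. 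Hence \(\RevV{n}{\pi'}{W}=\RevV{n}{\pi}{\mathbf v}\) for every \(n\), and taking expectations gives \(\RevUnk{\pi'}=\RevUnk{\pi}\).

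\textbf{Converse.} Any single-call plan in the new instance offers each virtual agent \((i,r)\) some price \(r'\). With the choice \(W_{(i,r)}=V_i\), this is simply the multi-call call \((i,r')\), so revenues again coincide. With the choice \(W=r\mathbf 1\{\cdot\}\), offering \(r'\le r\) (\(r'>0\)) accepts exactly when \(V_i\ge r\), earning \(r'\le r\), so it is dominated by offering \(r\). Prices \(r'>r\) are never accepted. Thus in both cases the optima coincide, and the mapping is constructive and preserves approximation factors.

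\textbf{Main obstacle.} The only subtle point is the domination and repeated-call argument when the horizon is random. It is handled pointwise: deleting a surely-rejected call shifts later calls earlier, and \(\RevV{n}{\cdot}{\mathbf v}\) is monotone under such deletions for each \(n\).
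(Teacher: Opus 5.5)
Your proof is correct and follows the paper's argument. The paper also creates a virtual agent $(i,r)$ for each agent--price pair with the single call $((i,r),r)$, sets $V'_{(i,r)}:=V_i$ (your alternative choice), and notes that acceptance events coincide pathwise, so revenues agree for every horizon realization. Your extra steps (removing duplicate calls, and the domination argument for arbitrary prices offered to virtual agents) are sound but not needed in the paper's formulation: there the new call set contains only $((i,r),r)$, so call sets correspond bijectively.
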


\begin{proof}[Proof of Lemma \ref{lem:multicall-special-case}]
Consider any multi-call instance with agent set \(I\) and candidate prices
\(M_i\) for each agent \(i\). Construct a new instance with expanded agent set
\[
  I' \;:=\; \{(i,r): i\in I,\ r\in M_i\},
\]
and call ground set
\[
  \Calls' \;:=\; \{\, ((i,r),r) : (i,r)\in I' \,\}.
\]
Thus, each agent in \(I'\) appears in exactly one call.

Define a joint distribution over \(V'=(V'_{(i,r)})_{(i,r)\in I'}\) by sampling the
original valuation profile \(V\) and setting \(V'_{(i,r)}:=V_i\) for all
\((i,r)\in I'\). Then, for every \((i,r)\), the acceptance event of the original
call \((i,r)\) coincides with the acceptance event of the corresponding call
\(((i,r),r)\):
\[
  \mathbf{1}\{V_i\ge r\} \;=\; \mathbf{1}\{V'_{(i,r)}\ge r\}.
\]

Fix any set \(S\) of calls in the original instance and any execution order.
Map \(S\) to \(S'\subseteq\Calls'\) by replacing each \((i,r)\in S\) with its
corresponding call \(((i,r),r)\). Under the coupling above, the accept/reject
outcomes along the execution are identical step-by-step, and therefore the
(realized and expected) revenue is the same for every horizon realization. Hence
the multi-call instance is equivalent to a correlated-values instance with at
most one call per agent.
\end{proof}

\noindent
\refstepcounter{subsection}
\subsection*{Computational Model}
\label{subsec:input-representation-and-computational-model}
We first describe the finite input representation in the independent-values
setting. There are \(M\) agents. Each agent \(i\) has a value distribution
\(F_i\) supported on \(K\) rational values, given explicitly by their
probability masses. The process has at most \(T\) potential call opportunities,
and the horizon random variable \(N\) is specified by its survival
probabilities \(q_t := \Pr[N \ge t]\) for \(t=1,\ldots,T\). Each call
\(e=(i_e,r_e)\) is represented by the index \(i_e\) and a rational price \(r_e\).
The size of the instance is the total encoding length of
\((M, K, T, \{F_i\}_{i\in I}, \{q_t\}_{t=1}^T)\), and all running times are
measured with respect to this encoding.

As noted formally in Section \ref{sec:model}, for correlated valuations the only change to the input is access to samples of the joint distribution of valuations.

\section{Appendix - Section \ref{sec:known}}
\label{sec:app-known}
This appendix contains examples, proof sketches, and formal proofs for the results in Section~\ref{sec:known}

\refstepcounter{subsection}
\subsection*{Examples}
\label{sec:app-known-examples}
\begin{example}[Non-monotonicity and non-submodularity]
\label{ex:nonmono-nonsubmod}
Consider three calls as elements indexed by agent, price and time $(i,r,t)$:
\[
e=(A,5,1),\qquad e'=(B,0.1,2),\qquad f=(C,10,3).
\]
Let the acceptance probabilities be
\[
p_e=0.9,\qquad p_{e'}=0.99,\qquad p_f=0.9,
\]
and assume a deterministic horizon of length at least $3$ (equivalently,
$q_1=q_2=q_3=1$).

\noindent
\textbf{Non-monotonicity:}
Let $S=\{f\}$. Then
\[
\RevUnk{\pi(S)}=9
\qquad\text{while}\qquad
\RevUnk{\pi(S\cup\{e\})}=5.4,
\]
so adding $e$ decreases expected revenue.

\noindent
\textbf{Non-submodularity:}
Let $T=\{e',f\}$ (so $S\subseteq T$). Then
\[
\RevUnk{\pi(T)}=0.189
\qquad\text{and}\qquad
\RevUnk{\pi(T\cup\{e\})}=4.5189,
\]
\[
\RevUnk{\pi(S\cup\{e\})}-\RevUnk{\pi(S)}=-3.6
\;<\;
4.3299=\RevUnk{\pi(T\cup\{e\})}-\RevUnk{\pi(T)}.
\]
\end{example}

\refstepcounter{subsection}
\subsection*{Formal Proofs}
\label{sec:app-known-proofs}

\begin{proof}[Proof of Lemma \ref{lem:price-order-optimal}]
Consider a valuation realization \(\mathbf{v}\in\mathbb{R}_{\ge 0}^{I}\).
If no call in \(S\) is accepted under \(\mathbf{v}\), then
\(\RevV{\infty}{\pi}{\mathbf{v}}=0\) for every permutation \(\pi\) of \(S\), and
the claim holds.

Otherwise, let \(e^\star\in S\) be an accepted call of maximum price under
\(\mathbf{v}\):
\[
  e^\star \in \arg\max\{\, r_e : e\in S,\ \mathbf{v}_{i_e}\ge r_e \,\},
  \qquad r^\star := r_{e^\star}.
\]
Consider the decreasing-price sequence \(\pi^{\mathrm{d}}(S)\). By definition,
every call appearing before \(e^\star\) in \(\pi^{\mathrm{d}}(S)\) has price at
least \(r^\star\). Since \(r^\star\) is the maximum accepted price under
\(\mathbf{v}\), all such earlier calls are rejected under \(\mathbf{v}\). Hence
the first acceptance under \(\pi^{\mathrm{d}}(S)\) occurs at \(e^\star\), and
therefore
\[
  \RevV{\infty}{\pi^{\mathrm{d}}(S)}{\mathbf{v}} \;=\; r^\star.
\]

Now consider an arbitrary permutation \(\pi\) of \(S\). If \(\pi\) has no accepted
call under \(\mathbf{v}\), then \(\RevV{\infty}{\pi}{\mathbf{v}}=0\le r^\star\).
Otherwise, its realized revenue equals the price \(r_e\) of the first accepted
call \(e\in S\) under \(\pi\). By maximality of \(r^\star\) among accepted calls,
we have \(r_e\le r^\star\). In all cases,
\[
  \RevV{\infty}{\pi}{\mathbf{v}}
  \;\le\;
  r^\star
  \;=\;
  \RevV{\infty}{\pi^{\mathrm{d}}(S)}{\mathbf{v}}.
\]
\end{proof}

\begin{lemma}[Decreasing-price order is pointwise monotone and submodular]
\label{lem:dec-reachable-submod}
Consider the decreasing-price order \(\orddown\) on \(\Calls\).
For any valuation realization \(\mathbf{v}\in\mathbb{R}_{\ge 0}^{I}\), the set
function \(\RevV{\infty}{\pi^{\mathrm{d}}(S)}{\mathbf{v}}\) on \ $S$ is monotone
and submodular.
\end{lemma}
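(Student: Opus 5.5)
The plan is to reduce the set function to a closed form using the argument in the proof of Lemma~\ref{lem:price-order-optimal}. Fix a realization \(\mathbf{v}\). For a call \(e\), let
\[
w_e:=r_e\,\mathbf{1}\{\mathbf{v}_{i_e}\ge r_e\},
\]
so \(w_e=r_e\) if \(e\) is accepted and \(w_e=0\) otherwise. That proof shows the decreasing-price plan \(\pi^{\mathrm{d}}(S)\) earns exactly the largest accepted price in \(S\), and earns \(0\) if nothing in \(S\) is accepted. Hence
\[
\RevV{\infty}{\pi^{\mathrm{d}}(S)}{\mathbf{v}}=\max_{e\in S} w_e, \qquad \max_{\emptyset}:=0 .
\]

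With this form, I would argue directly that the function \(f(S):=\max_{e\in S}w_e\) is monotone and submodular.

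\textbf{Monotonicity.} If \(S\subseteq T\), then the maximum over \(T\) ranges over a superset of the weights (all nonnegative), so \(f(S)\le f(T)\).

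\textbf{Submodularity.} Take \(S\subseteq T\) and \(e\notin T\). The marginal gain is
\[
f(S\cup\{e\})-f(S)=\max\{0,\,w_e-f(S)\}.
\]
The map \(x\mapsto\max\{0,w_e-x\}\) is nonincreasing, and \(f(S)\le f(T)\) by monotonicity. Therefore the marginal gain of \(e\) at \(T\) is at most its marginal gain at \(S\), which is the diminishing-returns condition.

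The only step needing care is the closed form, specifically ties among prices. The tie-breaking rule in \(\orddown\) only permutes calls of equal price. If several calls share the maximum accepted price \(r^\star\), whichever of them comes first still yields revenue \(r^\star\). Every call placed before that first accepted call has price at least \(r^\star\); since \(r^\star\) is the maximum accepted price, any such call with price strictly above \(r^\star\) is rejected. So the identity holds regardless of tie-breaking, and the rest is routine.
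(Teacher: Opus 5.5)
Your proposal is correct and follows essentially the same route as the paper: it also rewrites \(\RevV{\infty}{\pi^{\mathrm{d}}(S)}{\mathbf{v}}\) as \(\max_{e\in S} r_e\mathbf{1}\{\mathbf{v}_{i_e}\ge r_e\}\), then gets monotonicity from taking the max over a superset and submodularity from the marginal gain being nonincreasing in the current value. Your explicit handling of price ties is a small tightening of the closed-form step, which the paper's proof of Lemma~\ref{lem:price-order-optimal} passes over.
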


\begin{proof}[Proof of Lemma \ref{lem:dec-reachable-submod}]
Consider a valuation realization \(\mathbf{v}\in\mathbb{R}_{\ge 0}^{I}\) and let
\[
  R_{\mathbf{v}}(S)
  \;:=\;
  \RevV{\infty}{\pi^{\mathrm{d}}(S)}{\mathbf{v}}
  \qquad\text{for } S\subseteq\Calls.
\]
Since \(\infty\) indicates that the horizon does not truncate the sequence,
executing \(\pi^{\mathrm{d}}(S)\) selects the first accepted call in decreasing
price order. Equivalently,
\[
  R_{\mathbf{v}}(S)
  \;=\;
  \max_{e\in S}\Bigl\{\, r_e\cdot \mathbf{1}\{\mathbf{v}_{i_e}\ge r_e\}\Bigr\},
\]
with the convention that the maximum over an empty set is \(0\).

\smallskip
\noindent\emph{Monotonicity:}
Consider \(S\subseteq T\). Then
\[
  R_{\mathbf{v}}(T)
  \;=\;
  \max_{e\in T}\Bigl\{\, r_e\cdot \mathbf{1}\{\mathbf{v}_{i_e}\ge r_e\}\Bigr\}
  \;\ge\;
  \max_{e\in S}\Bigl\{\, r_e\cdot \mathbf{1}\{\mathbf{v}_{i_e}\ge r_e\}\Bigr\}
  \;=\;
  R_{\mathbf{v}}(S).
\]

\noindent\emph{Submodularity:}
Consider \(S\subseteq T\subseteq\Calls\) and \(e\notin T\). Using the max
characterization, adding \(e\) affects the value only through whether \(e\) is
accepted and whether its price exceeds the current maximum. Thus,
\[
  R_{\mathbf{v}}(U\cup\{e\})-R_{\mathbf{v}}(U)
  \;=\;
  \begin{cases}
    0,
    & \text{if }\mathbf{v}_{i_e}<r_e \text{ or }
      r_e \le R_{\mathbf{v}}(U),\\[2pt]
    r_e - R_{\mathbf{v}}(U),
    & \text{if }\mathbf{v}_{i_e}\ge r_e \text{ and }
      r_e > R_{\mathbf{v}}(U),
  \end{cases}
  \qquad\text{for any }U\subseteq\Calls.
\]
Since \(S\subseteq T\) implies \(R_{\mathbf{v}}(S)\le R_{\mathbf{v}}(T)\),
comparing the above expression for \(U=S\) and \(U=T\) gives
\[
  R_{\mathbf{v}}(S\cup\{e\})-R_{\mathbf{v}}(S)
  \;\ge\;
  R_{\mathbf{v}}(T\cup\{e\})-R_{\mathbf{v}}(T),
\]
which proves submodularity.
\end{proof}

\begin{proof}[Proof of Theorem \ref{thm:known-horizon-reduction}]
Consider a deterministic horizon \(n\in\mathbb{N}\). By
Lemma~\ref{lem:price-order-optimal}, for any call set \(S\subseteq\Calls\) and
any valuation realization \(\mathbf{v}\in\mathbb{R}_{\ge 0}^{I}\), executing \(S\)
in decreasing-price order \(\pi^{\mathrm{d}}(S)\) is optimal among all orderings
of \(S\) on the sample path:
\[
  \RevV{\infty}{\pi^{\mathrm{d}}(S)}{\mathbf{v}}
  \;=\;
  \max_{\pi\ \text{a permutation of }S}\ \RevV{\infty}{\pi}{\mathbf{v}}.
\]
The use of \(\infty\) is without loss of generality, since for any permutation
\(\pi\) of \(S\), the realized revenue is unaffected once the horizon exceeds
\(|\pi|\). Therefore, the known-horizon problem may restrict attention to
solutions that execute any selected set \(S\) via \(\pi^{\mathrm{d}}(S)\).

Under this fixed order, Lemma~\ref{lem:dec-reachable-submod} implies that for
every valuation realization \(\mathbf{v}\), the quantity
\(\RevV{\infty}{\pi^{\mathrm{d}}(S)}{\mathbf{v}}\) is monotone and submodular in
\(S\). Now incorporate the horizon constraint. For any set \(S\) with \(|S|\le n\),
the decreasing-price sequence \(\pi^{\mathrm{d}}(S)\) has length \(|S|\), and hence
\[
  \RevV{n}{\pi^{\mathrm{d}}(S)}{\mathbf{v}}
  \;=\;
  \RevV{\infty}{\pi^{\mathrm{d}}(S)}{\mathbf{v}}
  \qquad\text{for all }\mathbf{v}\in\mathbb{R}_{\ge 0}^{I}.
\]
Taking expectation over \(V\) yields
\[
  \RevN{n}{\pi^{\mathrm{d}}(S)}
  \;=\;
  \RevN{\infty}{\pi^{\mathrm{d}}(S)}
  \qquad\text{for all }S\subseteq\Calls \text{ with }|S|\le n,
\]
and therefore \(\RevN{n}{\pi^{\mathrm{d}}(S)}\) is monotone and submodular on the
feasible domain \(\{S\subseteq\Calls:\ |S|\le n\}\).

It remains to note that the feasibility constraints in the known-horizon problem
form a matroid. The horizon constraint \(|S|\le n\) is a uniform matroid. The
restriction that each agent is approached at most once is a partition matroid.
Their intersection is a matroid, and thus the feasible family is a matroid.
Consequently, the known-horizon sequential pricing problem is a monotone
submodular maximization problem under a matroid constraint.
\end{proof}

\begin{proof}[Proof of Theorem \ref{thm:hardness-correlated-1-1e}]
We give a polynomial-time, approximation-preserving reduction from weighted
Max-$k$-Coverage.

Let $U$ be a universe with weights $w(u)\ge 0$ and sets
$\{\mathcal{S}_i\}_{i\in[m]}$, and let $k$ be the cardinality budget. The
weighted Max-$k$-Coverage problem is
\[
  \max_{S\subseteq[m],\,|S|\le k}\ \sum_{u\in \cup_{i\in S}\mathcal{S}_i} w(u).
\]
Let $W:=\sum_{u\in U} w(u)$.

From this instance we construct a correlated-values known-horizon pricing
instance with horizon $n=k$ as follows. For each $i\in[m]$, create a distinct
agent $i$ and a single call $e_i=(i,1)$ with posted price $r_{e_i}=1$.

\textbf{Correlated valuation distribution:}
We specify a joint distribution over $V=(V_i)_{i\in[m]}\in\{0,1\}^m$ by giving
its support and probabilities. For each $u\in U$, define a valuation vector
$v^{(u)}\in\{0,1\}^m$ by
\[
  v^{(u)}_i \;:=\; \mathbf{1}\{u\in \mathcal{S}_i\},\qquad i\in[m],
\]
and set
\[
  \Pr\!\bigl[V=v^{(u)}\bigr] \;:=\; \frac{w(u)}{W},\qquad u\in U.
\]
This distribution has support size $|U|$, and thus the pricing instance has a polynomial-size description.

Now any feasible pricing solution selects a subset of at most $k$ calls from
$\{e_1,\ldots,e_m\}$. Ignoring order yields a feasible index set
$S\subseteq[m]$ with $|S|\le k$, and conversely any such $S$ yields a feasible
pricing plan by selecting $\{e_i:i\in S\}$ in any order.

Moreover, since all prices equal $1$, for any $|S|\le k$ and any ordering $\pi$
of $\{e_i:i\in S\}$, every valuation realization $v$ satisfies
\[
  \RevV{k}{\pi}{v}
  \;=\;
  \mathbf{1}\Bigl(\exists\, i\in S \text{ with } v_i\ge 1\Bigr).
\]
In particular, under $v^{(u)}$ this event occurs iff
$u\in \cup_{i\in S}\mathcal{S}_i$, and therefore
\[
  \RevN{k}{\pi}
  \;=\;
  \sum_{u\in U} \Pr[V=v^{(u)}]\cdot
  \mathbf{1}\Bigl(u\in \bigcup_{i\in S}\mathcal{S}_i\Bigr)
  \;=\;
  \frac{1}{W}\sum_{u\in \cup_{i\in S}\mathcal{S}_i} w(u).
\]
Thus, for every feasible $S$, the expected revenue of the corresponding pricing
plan equals the weighted coverage value scaled by the positive constant $1/W$.
Therefore, the reduction preserves approximability: any
$\alpha$-approximation for the pricing instance yields an $\alpha$-approximation
for weighted Max-$k$-Coverage.

The claimed inapproximability bound for the pricing problem follows from
\citet{feige1998threshold}.
\end{proof}

We note that the construction above is closely related in spirit to reductions for
``expected-maximum'' objectives in other application settings; for example, ~\cite{kleinberg2015teamperformancetestscores} use a similar
indicator-based encoding to establish NP-hardness of their optimization
problem.

\refstepcounter{subsection}
\subsection*{Additional Results for the multi-call model}
\label{sec:app-known-multicall}

\begin{theorem}[Multi-call known horizon]
\label{thm:multicall-known}
Consider the deterministic horizon \(n\in\mathbb{N}\). Let \(S_{\mathrm{greedy}}\) be
the output of Algorithm~\ref{alg:greedy-multicall}. Then
\[
  \RevN{n}{\pi^{\mathrm{d}}(S_{\mathrm{greedy}})}
  \;\ge\;
  (1-1/e)\,\RevN{n}{\pi^{\mathrm{d}}(S^\star)},
\]
where \(S^\star\in\arg\max\{\RevN{n}{\pi^{\mathrm{d}}(S)}:\ |S|\le n\}\).
\end{theorem}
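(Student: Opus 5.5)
The plan is to reduce Theorem~\ref{thm:multicall-known} to monotone submodular maximization under a \emph{cardinality} constraint, and then invoke the classical greedy guarantee of Nemhauser, Wolsey and Fisher. I am assuming that Algorithm~\ref{alg:greedy-multicall} is the standard greedy procedure: it repeatedly adds the call $(i,r)$ with the largest marginal gain in $\RevN{n}{\pi^{\mathrm{d}}(\cdot)}$ until $n$ calls are chosen. Repeated agents are now allowed.

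\textbf{Step 1: structure of the objective.} Apply Lemma~\ref{lem:multicall-special-case}. Each multi-call call $(i,r)$ becomes a single call $((i,r),r)$ to a fresh agent $(i,r)$, with the coupled valuations $V'_{(i,r)}=V_i$. The instance is then a correlated-values single-call instance, and the agent-side partition constraint is vacuous because each new agent has exactly one call. Alternatively, one can argue directly, since the proofs of Lemma~\ref{lem:price-order-optimal} and Lemma~\ref{lem:dec-reachable-submod} never use that agents are distinct. In both proofs the realized revenue of $\pi^{\mathrm{d}}(S)$ under $\mathbf{v}$ is $\max_{e\in S} r_e\mathbf{1}\{\mathbf{v}_{i_e}\ge r_e\}$. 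Either way, $S\mapsto \RevV{\infty}{\pi^{\mathrm{d}}(S)}{\mathbf{v}}$ is monotone and submodular for every $\mathbf{v}$. Taking expectation over $V$ preserves both properties, since they are closed under nonnegative combinations. Hence $f(S):=\RevN{\infty}{\pi^{\mathrm{d}}(S)}$ is monotone submodular.

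\textbf{Step 2: the horizon becomes a cardinality constraint.} As in the proof of Theorem~\ref{thm:known-horizon-reduction}, every $S$ with $|S|\le n$ satisfies $\RevN{n}{\pi^{\mathrm{d}}(S)}=f(S)$. So the problem is $\max\{f(S):|S|\le n\}$, which is a uniform matroid constraint with no partition part. Lemma~\ref{lem:price-order-optimal} also shows that restricting to decreasing-price executions loses nothing against any ordering of the same multiset of calls. Therefore $S^\star$ as defined in the statement is the right benchmark.

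\textbf{Step 3: greedy guarantee.} For monotone submodular $f$ with $f(\emptyset)=0$ under a cardinality constraint, the standard argument applies. Submodularity and monotonicity give
\[
f(S^\star)-f(S_j)\le n\,\bigl(f(S_{j+1})-f(S_j)\bigr)
\]
for the greedy iterates $S_j$. This yields $f(S_n)\ge (1-(1-1/n)^n)f(S^\star)\ge(1-1/e)f(S^\star)$. Since $|S_{\mathrm{greedy}}|\le n$, the left side equals $\RevN{n}{\pi^{\mathrm{d}}(S_{\mathrm{greedy}})}$, which is the claim.

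\textbf{Expected obstacle.} The only delicate point is ensuring that the greedy step in Algorithm~\ref{alg:greedy-multicall} is exactly the marginal-gain greedy for $f$ over the full multi-call ground set, and that duplicate calls are harmless. Calls $(i,r)$ at distinct prices are distinct elements. A repeated identical call would have zero marginal gain, because the max characterization is unchanged, so it can be discarded. If the algorithm only evaluates $f$ approximately through the value oracle, I would appeal to the standard robustness of greedy, noted in Section~\ref{sec:model}.
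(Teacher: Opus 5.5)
Your proposal is correct and follows essentially the same route as the paper: use Lemma~\ref{lem:multicall-special-case} to view the multi-call instance as a single-call correlated instance in which the agent constraint is vacuous, use the monotone submodularity of the decreasing-price objective (Theorem~\ref{thm:known-horizon-reduction}) to reduce to a cardinality constraint, and invoke the Nemhauser--Wolsey--Fisher greedy bound. Working explicitly with $f(S)=\RevN{\infty}{\pi^{\mathrm{d}}(S)}$ on all subsets is a small but welcome tightening, because the greedy analysis evaluates $f$ on sets such as $S_j\cup S^\star$ of size up to $2n$, whereas the paper only states submodularity on $\{S: |S|\le n\}$.
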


\begin{proof}
By Lemma~\ref{lem:multicall-special-case}, any multi-call instance can be viewed
as a correlated-values instance with at most one call per agent. In the resulting representation, each expanded agent has exactly one
associated call (equivalently, only one available price), so the constraint
\emph{``at most one call per agent''} is satisfied automatically by every set of
calls. Hence there is no additional partition-matroid feasibility constraint;
the only feasibility restriction in the known-horizon setting is the horizon
constraint \(|S|\le n\), i.e., a cardinality constraint.

Under the fixed order \(\orddown\), Theorem~\ref{thm:known-horizon-reduction}
implies that \(\RevN{n}{\pi^{\mathrm{d}}(S)}\) is a monotone submodular function
of \(S\) on \(|S|\le n\). Algorithm~\ref{alg:greedy-multicall} is exactly the
standard greedy algorithm applied to this function under a cardinality budget
\(n\): it repeatedly adds an element maximizing the marginal gain
\(\RevN{n}{\pi^{\mathrm{d}}(S\cup\{e\})}-\RevN{n}{\pi^{\mathrm{d}}(S)}\).
Therefore, by the classic result of ~\cite{nemhauser1978analysis},
the greedy algorithm attains a \((1-1/e)\) approximation under a cardinality
constraint, yielding the stated guarantee.
\end{proof}

\begin{algorithm}[t]
\caption{Greedy for Multi-Call Known Horizon}
\label{alg:greedy-multicall}
\DontPrintSemicolon
\SetKwInOut{Input}{Input}
\SetKwInOut{Output}{Output}

\Input{call set \(\Calls\), horizon \(n\)}
\Output{set \(S_{\mathrm{greedy}}\subseteq\Calls\) with \(|S_{\mathrm{greedy}}|\le n\)}

\(S \gets \emptyset\)\;
\For{\(t \gets 1\) \KwTo \(n\)}{
  \(e_t \in \arg\max\limits_{e\in\Calls\setminus S}
     \Bigl(\RevN{n}{\pi^{\mathrm{d}}(S\cup\{e\})}-\RevN{n}{\pi^{\mathrm{d}}(S)}\Bigr)\)\;
  \(S \gets S \cup \{e_t\}\)\;
}
\Return \(S_{\mathrm{greedy}} \gets S\)\;

\end{algorithm}

\section{Appendix - Section \ref{sec:unknown-arbitrary-quarter}}
\label{sec:app-unknown-quarter}
This appendix contains examples, proof intuition, and formal proofs for the results in Section~\ref{sec:unknown-arbitrary-quarter}

\refstepcounter{subsection}
\subsection*{Examples}
\label{sec:app-unknown-quarter-examples}
\begin{example}[Set-dependent optimal order]
\label{ex:set-dependent-order}
Consider three calls to independent agents $A,B,C$, where call $X \in \{A,B,C\}$ is offered at price $r_X$ and is accepted with probability $p_X$

Let the survival probabilities of time be $q_1=1$, $q_2=0.6$, and $q_3=0.2$, and take
\[
(r_A,p_A)=(9,0.5),\qquad (r_B,p_B)=(8,0.6),\qquad (r_C,p_C)=(8,0.8).
\]
On the subset $\{A,B\}$, executing $A$ before $B$ is optimal since
$Rev(A,B)=5.94>Rev(B,A)=5.88$.
However, on the superset $\{A,B,C\}$ the optimal order is $(C,B,A)$ since
$Rev(C,B,A)=7.048>Rev(C,A,B)=7.036$, which implies $B$ should precede $A$.
Thus, adding $C$ flips the optimal relative order of the two existing calls
$A$ and $B$.
\end{example}

\refstepcounter{subsection}
\subsection*{Proof Sketches and Intuition}
\label{sec:app-unknown-quarter-sketch}

\subsubsection{Proof Sketch Lemma \ref{lem:select-mono-det}}
Lemma \ref{lem:select-mono-det} establishes a basic monotonicity property of the pruning rule at a
fixed time.  It says that if an element \(e\) is selected by pruning from a
larger set \(S'\cup\{e\}\), then it must also be selected from any smaller subset
\(S\cup\{e\}\), provided that \(e\) has the largest acceptance probability among
the available offers at that time.  The intuition is that moving from \(S'\) to
\(S\) can only reduce the future value from later times.  When \(e\) has
the highest acceptance probability, a lower future value makes selecting
\(e\) even more attractive relative to all competing options. This property is
a key step toward establishing a reverse-time submodular order for the pruned
evaluator.

\subsubsection{Proof Sketch Lemma \ref{lem:G-subadditive}} Lemma \ref{lem:G-subadditive} shows that the pruned evaluator is subadditive.  The proof proceeds
by viewing \(G(S\cup T)\) as the expected revenue of a single time-feasible plan
selected from the offers in \(S\cup T\).  This plan is then split into two
time-feasible subplans, one using only offers from \(S\) and the other using only
offers from \(T\).  All three plans are coupled on the same realization of the
horizon and valuations.  Under this coupling, any offer that is attempted in the
combined plan must also be attempted in at least one of the two subplans.  As a
result, the realized revenue of the combined plan is bounded by the sum of the
realized revenues of the two subplans.

\refstepcounter{subsection}
\subsection*{Formal Proofs}
\label{sec:app-unknown-quarter-proofs}

\begin{proof}[Proof of Lemma~\ref{lem:envelope-g}]
We prove by contradiction via an exchange argument over time slices.

Let \(S\subseteq\CallsT\).  For each \(t\), let \(\sel_t(S)\) be any maximizer of
the recursion defining \(G_t(S)\); i.e., \(\sel_t(S)=\varnothing\) if \(G_t(S)=0\),
and otherwise \(\sel_t(S)\in S_t\) attains
\(
q_t\pr{e}p_e+(1-p_e)G_{t+1}(S)
\).

By way of contradiction, suppose there exists a time-feasible subset
\(A\subseteq S\) such that
\[
\RevUnk{\pi(A)} \;>\; G_1(S).
\]
Let \(t_m\) be the largest time index at which \(A\) disagrees with the
pruning choice for \(S\).  That is, for all \(t>t_m\), the offer used by
\(\pi(A)\) at time \(t\) equals \(\sel_t(S)\), but at time
\(t_m\) the plan \(\pi(A)\) uses an element \(e_A\neq e^*\), where
\(e^*:=\sel_{t_m}(S)\) is the pruning-chosen element at time \(t_m\).

\noindent
Let \(A_{<t_m}\) denote the prefix of \(A\) before time \(t_m\), and let
\(A_{\ge t_m}\) denote the suffix from time \(t_m\) onward.  Let
\[
p:=\prod_{t'<t_m:\,\sel_{t'}(A)\neq\varnothing}\bigl(1-p_{\sel_{t'}(A)}\bigr)
\]
be the probability that all calls made before \(t_m\) are rejected under
\(\pi(A)\).  (This depends only on the prefix \(A_{<t_m}\).)

Conditioning on the outcomes of times \(<t_m\), the expected revenue decomposes as
\begin{equation}
\label{eq:rev-prefix-suffix}
\RevUnk{\pi(A)}
=
\RevUnk{\pi(A_{<t_m})}
\;+\;
p\cdot \Bigl(q_{t_m}\,\pr{e_A}\,p_{e_A} + (1-p_{e_A})\,\RevUnk{\pi(A_{\ge t_m+1})}\Bigr).
\end{equation}
On the event that all earlier calls reject (probability \(p\)), the
contribution from time \(t_m\) is its immediate expected reward
\(q_{t_m}\pr{e_A}p_{e_A}\), and if \(e_A\) rejects we obtain the continuation
value from times \(>t_m\), namely \(\RevUnk{\pi(A_{\ge t_m+1})}\).

Now we construct a modified subset \(A'\subseteq S\) by replacing the time-\(t_m\)
element \(e_A\) with \(e^*\), and keeping all other time slices identical:
\begin{itemize}[leftmargin=2em]
  \item for \(t<t_m\), let \(A'\) coincide with \(A\);
  \item at time \(t_m\), include \(e^*\) (and exclude \(e_A\));
  \item for \(t>t_m\), let \(A'\) coincide with \(A\).
\end{itemize}
Then \(A'\) is time-feasible, \(A'\subseteq S\), and the prefix (hence \(p\)) is
unchanged.  Moreover \(A'\) agrees with \(A\) on times \(>t_m\), so
\(A'_{\ge t_m+1}=A_{\ge t_m+1}\) and hence
\(\RevUnk{\pi(A'_{\ge t_m+1})}=\RevUnk{\pi(A_{\ge t_m+1})}\).

By definition of the pruning recursion for \(S\), the choice \(e^*=\sel_{t_m}(S)\)
maximizes
\[
q_{t_m}\,\pr{e}\,p_e \;+\; (1-p_e)\,G_{t_m+1}(S)
\]
over \(e\in S_{t_m}\) (and the outside option).  Since \(e_A\in S_{t_m}\), this implies
\begin{equation}
\label{eq:tm-argmax}
q_{t_m}\,\pr{e^*}\,p_{e^*} + (1-p_{e^*})\,G_{t_m+1}(S)
\;\ge\;
q_{t_m}\,\pr{e_A}\,p_{e_A} + (1-p_{e_A})\,G_{t_m+1}(S).
\end{equation}

Finally, since \(A_{\ge t_m+1}\subseteq S_{\ge t_m+1}\) is time-feasible, we have
\begin{equation}
\label{eq:suffix-upperbound}
\RevUnk{\pi(A_{\ge t_m+1})}\ \le\ G_{t_m+1}(S).
\end{equation}
Using \eqref{eq:rev-prefix-suffix} for both \(A\) and \(A'\), together with
\eqref{eq:tm-argmax} and \eqref{eq:suffix-upperbound} (and the equality of suffix
values for \(A\) and \(A'\)), we obtain
\[
\RevUnk{\pi(A')}
\;\ge\;
\RevUnk{\pi(A)}.
\]
Since \(A'\) matches the pruning choice for \(S\) at time \(t_m\) and continues
to match it for all \(t>t_m\), this contradicts the definition of \(t_m\) as the
last time of disagreement for a counterexample maximizing
\(\RevUnk{\pi(\cdot)}\).  Therefore no such \(A\) exists, and hence
\(G_1(S)\ge \max\{\RevUnk{\pi(A)}:\ A\subseteq S,\ A\in \mathcal I_{\mathrm{time}}\}\).
The reverse inequality follows by taking \(A\) induced by the recursion (i.e.,
choosing \(\sel_t(S)\) whenever \(G_t(S)>0\)), and thus \(G_1(S)=G(S)\).
\end{proof}

\begin{lemma}[Selection pattern at time $t$]
\label{lem:select-mono-det}
Let $S\subseteq S'\subseteq\CallsT$, fix a time $t$, and let $e\notin S'$ satisfy $\ti{e}=t$.
Assume that $
p_e \;\ge\; p_x \text{for all } x\in (S'\cup\{e\})_t.
$

If pruning at time $t$ selects $e$ from $S'\cup\{e\}$, then pruning at time $t$ also selects $e$ from $S\cup\{e\}$.
\end{lemma}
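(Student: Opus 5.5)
The plan is to reduce everything to the one-step scores in the backward recursion of Lemma~\ref{lem:envelope-g}. We use two facts: scores are affine in the future value, and $G_{t+1}$ is monotone.

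Write $g:=G_{t+1}(S\cup\{e\})$ and $g':=G_{t+1}(S'\cup\{e\})$. Since $\ti{e}=t$, adding $e$ does not change any slice at times $>t$. Hence $g=G_{t+1}(S)$ and $g'=G_{t+1}(S')$.

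\textbf{Step 1: monotonicity of the future value.} By the characterization $G(S)=\max\{\RevUnk{\pi(A)}: A\subseteq S,\ A\in\mathcal I_{\mathrm{time}}\}$ from Lemma~\ref{lem:envelope-g}, applied to the instance restricted to times $\ge t+1$, the value $G_{t+1}$ is a maximum over a family that grows with the set. So $S\subseteq S'$ gives $g\le g'$. This also follows directly from the recursion by backward induction: each $G_s$ is a max of terms that are nondecreasing in $G_{s+1}$, because the coefficient $(1-p_x)$ is nonnegative, and it is taken over a larger index set.

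\textbf{Step 2: affine comparison.} For any candidate $x$ at time $t$, define
\[
\phi_x(u):=q_t r_x p_x+(1-p_x)u.
\]
For a competitor $x\in S_t\subseteq S'_t$, the difference
\[
\phi_e(u)-\phi_x(u)=q_t(r_ep_e-r_xp_x)-(p_e-p_x)u
\]
is nonincreasing in $u$, since $p_e\ge p_x$ by hypothesis. Selection of $e$ from $S'\cup\{e\}$ gives $\phi_e(g')\ge\phi_x(g')$ for every $x\in S'_t\supseteq S_t$. It also gives $\phi_e(g')\ge 0$ against the outside option. Since $g\le g'$ and the difference is nonincreasing, we get $\phi_e(g)\ge\phi_x(g)$ for all $x\in S_t$.

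The outside option needs its own check. Its score is $0$, and $\phi_e(g)\ge 0$ holds trivially because $\phi_e(g)$ is a nonnegative combination of nonnegative quantities. More precisely, what must be compared is $G_t$: the outside option means selecting nothing, which gives value $g$ itself, not $0$ — the ``$0$'' in the recursion as written is weaker. We handle both readings. If the outside value is $0$, nonnegativity gives the claim. If the outside value is $u$ itself (skip the time slot), then $\phi_e(u)-u=p_e(q_tr_e-u)$ is nonincreasing in $u$, so the same monotonicity argument applies.

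\textbf{Step 3: tie-breaking.} To conclude that $e$ is actually \emph{selected}, we need a consistent tie-breaking rule. We adopt the convention that, among maximizers, the one with larger $p$ is preferred, matching $\sigma^{\mathrm{rev}}$. Suppose some $x\in S_t$ has $\phi_x(g)=\phi_e(g)$ while $\phi_e(g')>\phi_x(g')$. Then the tie at the smaller future value still resolves toward $e$: either $p_e>p_x$, or $p_e=p_x$ and the scores coincide identically, in which case the fixed rule applied at $g'$ also applies at $g$.

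The main obstacle is exactly this bookkeeping of ties and the outside option. The substantive inequality is the one-line monotone-difference argument in Step 2, which uses $p_e\ge p_x$ together with $g\le g'$.
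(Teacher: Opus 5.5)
Your proof is correct and is essentially the paper's argument. The paper also writes the time-$t$ score difference $q_t(r_ep_e-r_xp_x)+(p_x-p_e)C$ as nonincreasing in the future value $C$, uses $G_{t+1}(S)\le G_{t+1}(S')$, and treats the outside option as a null action with $(r,p)=(0,0)$ whose score is $G_{t+1}(U)$, which is your ``skip'' reading. The paper only concludes that $e$ attains the argmax, so your Step~3 is extra. Note, though, that the scenario you analyze there (a tie at $g$ with strict preference at $g'$) cannot occur: the difference at $g$ is at least the difference at $g'$, so every tie at $g$ is already a tie at $g'$, and a fixed priority rule resolves it the same way.
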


\begin{proof}[Proof of Lemma \ref{lem:select-mono-det}]
Introduce a null action $\varnothing$ at time $t$ with $(\pr{\varnothing},p_{\varnothing})=(0,0)$.
For any set $U$ and any $x\in U_t\cup\{\varnothing\}$, define the one-step value
\[
\phi_U(x;t)\ :=\ q_t\,\pr{x}\,p_x \;+\; (1-p_x)\,G_{t+1}(U).
\]
Note that $\phi_U(\varnothing;t)=G_{t+1}(U)$.  Since $S\subseteq S'$, the recursion yields
$G_{t+1}(S')\ge G_{t+1}(S)$.

Fix any competitor $x\in (S'\cup\{e\})_t\cup\{\varnothing\}$. Consider the following difference as a function of the future value $C$:
\[
\big(\phi(e;t)-\phi(x;t)\big)(C)
\;=\; q_t\,( \pr{e}p_e - \pr{x}p_x )\;+\;(p_x - p_e)\,C.
\]
By the assumption $p_e\ge p_x$, the slope with respect to $C$ is $(p_x-p_e)\le 0$, hence this difference is
nonincreasing in $C$.

If $e$ is selected at time $t$ from $S'\cup\{e\}$, then $\phi_{S'}(e;t)\ge \phi_{S'}(x;t)$ for all such competitors $x$.
Evaluating the above difference at $C=G_{t+1}(S')$ and using monotonicity in $C$ together with
$G_{t+1}(S)\le G_{t+1}(S')$ gives $\phi_S(e;t)\ge \phi_S(x;t)$ for all competitors $x$.
Therefore $e$ also attains the argmax at time $t$ under $S\cup\{e\}$.
\end{proof}

\begin{lemma}[Reverse-time submodular order under $p$-tie-breaks]
\label{lem:rev-time-diminish-det}
Let $S\subseteq T\subseteq\CallsT$, and let $e\notin T$ satisfy
\[
\ti{e}\ \le\ \ti{x}\quad \text{for all } x\in S\cup T
\quad(\text{$e$ is weakly the earliest element}).
\]
Assume that within time $\ti{e}$ we break ties by increasing $p$, and that
\[
p_e \;>\; p_x \qquad \text{for all } x\in (T\cup\{e\})_{\ti{e}}.
\]
Then
\[
G(S\cup\{e\}) - G(S) \;\ge\; G(T\cup\{e\}) - G(T).
\]
\end{lemma}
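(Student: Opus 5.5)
The plan is to reduce both marginal gains to a comparison at the single time slice $t:=\ti{e}$, using the backward recursion of Lemma~\ref{lem:envelope-g}, and then to reuse the monotone-in-$C$ argument from the proof of Lemma~\ref{lem:select-mono-det}.

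\emph{Step 1 (reduction to time $t$).} Since $e$ is weakly the earliest element of $S\cup T\cup\{e\}$, every slice $U_s$ with $s<t$ is empty for $U\in\{S,T,S\cup\{e\},T\cup\{e\}\}$. For such $s$ the recursion gives $G_s(U)=\max\{0,G_{s+1}(U)\}=G_{s+1}(U)$, because all $G$-values are nonnegative. Hence $G(U)=G_1(U)=G_t(U)$ for each of these four sets.

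Moreover $e$ lives at time $t$, so adding it does not change any later slice. Therefore $G_{t+1}(U\cup\{e\})=G_{t+1}(U)=:C_U$.

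With $\phi(x;C):=q_t\pr{x}p_x+(1-p_x)C$, and the null action $\varnothing$ having $(\pr{\varnothing},p_\varnothing)=(0,0)$ as in Lemma~\ref{lem:select-mono-det}, the recursion reads
\[
G_t(U)=\max_{x\in U_t\cup\{\varnothing\}}\phi(x;C_U),\qquad
G_t(U\cup\{e\})=\max\{\phi(e;C_U),\,G_t(U)\}.
\]
Consequently
\[
G(U\cup\{e\})-G(U)=\max\Bigl\{0,\ \min_{x\in U_t\cup\{\varnothing\}}\bigl(\phi(e;C_U)-\phi(x;C_U)\bigr)\Bigr\}.
\]

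\emph{Step 2 (monotonicity of the future value).} Next I show $C_S\le C_T$, i.e.\ that $G_{s}$ is monotone in the set for every $s$. I would do this by backward induction on $s$ from $G_{T+1}\equiv 0$.

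Suppose $G_{s+1}(S)\le G_{s+1}(T)$. Each $\phi(x;C)$ is nondecreasing in $C$, since $1-p_x\ge 0$. In addition $S_s\subseteq T_s$, so the maximum defining $G_s(T)$ ranges over a superset of candidates, each evaluated at a weakly larger $C$. Hence $G_s(S)\le G_s(T)$. (Alternatively, this follows directly from the max-over-subsets characterization in Lemma~\ref{lem:envelope-g} applied to the suffix.)

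\emph{Step 3 (comparison).} For each competitor $x\in (T\cup\{e\})_t\cup\{\varnothing\}$, the difference
\[
\phi(e;C)-\phi(x;C)=q_t(\pr{e}p_e-\pr{x}p_x)+(p_x-p_e)C
\]
is nonincreasing in $C$. This holds because $p_e>p_x$ by hypothesis, and $p_e\ge 0=p_\varnothing$ covers the null action.

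Hence, using $S_t\subseteq T_t$ first and then $C_S\le C_T$,
\[
\min_{x\in T_t\cup\{\varnothing\}}\bigl(\phi(e;C_T)-\phi(x;C_T)\bigr)
\le \min_{x\in S_t\cup\{\varnothing\}}\bigl(\phi(e;C_T)-\phi(x;C_T)\bigr)
\le \min_{x\in S_t\cup\{\varnothing\}}\bigl(\phi(e;C_S)-\phi(x;C_S)\bigr).
\]
Taking $\max\{0,\cdot\}$ of both ends gives $G(T\cup\{e\})-G(T)\le G(S\cup\{e\})-G(S)$, which is the claim.

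The only step needing real care is Step 2. The argument uses that enlarging the set raises the continuation value, and that $e$'s advantage over every competitor shrinks as the continuation value grows; this is exactly where the hypothesis that $p_e$ is the largest acceptance probability at time $t$ enters. The reduction in Step 1 is routine once nonnegativity of $G$ is noted.
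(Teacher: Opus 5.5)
Your proof is correct and uses the same ingredients as the paper's: reducing the marginal to the time-$\ti{e}$ slice with continuation values $C_S\le C_T$, and observing that $\phi(e;C)-\phi(x;C)$ is nonincreasing in $C$ whenever $p_e>p_x$. The only difference is packaging. The paper splits into cases according to whether $e$ is selected under $S$ and under $T$, invoking Lemma~\ref{lem:select-mono-det} to rule out the mixed case, and then compares against the specific competitor $\hat x_S$. Your formula $\max\{0,\min_x(\phi(e;C_U)-\phi(x;C_U))\}$ handles all cases at once and makes that auxiliary lemma unnecessary.
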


\begin{proof}[Proof of Lemma \ref{lem:rev-time-diminish-det}]
Let $t^\star := \ti{e}$. Because $e$ is weakly the earliest element in $S\cup T$,
the addition of $e$ only affects the recursion at time $t^\star$; all earlier
time slices are unchanged.

For any $U\subseteq\CallsT$, let
\[
C_U \;:=\; G_{t^\star+1}(U)
\]
be the future value from time $t^\star+1$ onward. Introduce the null
action $\varnothing$ at time $t^\star$ with $(\pr{\varnothing},p_{\varnothing})=(0,0)$,
and define the one-step values at time $t^\star$ by
\[
\phi_U(x)
\;:=\;
q_{t^\star}\,\pr{x}\,p_x \;+\; (1-p_x)\,C_U,
\qquad x\in U_{t^\star}\cup\{\varnothing\}.
\]
(Note that $\phi_U(\varnothing)=C_U$.)  Let $\hat x_U$ denote the competitor
selected at time $t^\star$ when $e$ is not present:
\[
\hat x_U \;\in\;
\arg\max\bigl\{
   \phi_U(x):x\in U_{t^\star}\cup\{\varnothing\}
\bigr\}.
\]

By the pruning recursion at time $t^\star$ and that $e$ does not
appear before $t^\star$, the marginal contribution of $e$ to $G$ under $U$ is
\[
\Delta(e\mid U)
\;:=\;
G(U\cup\{e\}) - G(U)
\;=\;
\max\{\,0,\ \phi_U(e) - \phi_U(\hat x_U)\,\}.
\]

Let
\[
\SelG(e;U)
\;:=\;
\mathbf{1}\!\left\{
   \text{$e$ is selected at time $t^\star$ by pruning on }U\cup\{e\}
\right\}.
\]

If $\SelG(e;T)=0$, then $\Delta(e\mid T)=0$, while $\Delta(e\mid S)\ge 0$, hence
$\Delta(e\mid S)\ge \Delta(e\mid T)$ holds.  Moreover, the pattern
$\SelG(e;S)=0$ and $\SelG(e;T)=1$ is impossible by Lemma~\ref{lem:select-mono-det}.
Therefore, it remains to consider the case $\SelG(e;S)=\SelG(e;T)=1$.

In this case, the
marginal contribution of \(e\) is determined by how much better it is than the
best alternative available at time \(t^\star\).  The comparison therefore hinges
on how the pruning scores of \(e\) and its strongest competitor change as we
move from the smaller set \(S\) to the larger set \(T\).

When we increase the future value from \(C_S\) to \(C_T\), the score
\[
\phi_U(x)=q_{t^\star}\pr{x}p_x+(1-p_x)C_U
\]
of each action \(x\) increases by \((1-p_x)(C_T-C_S)\).  Actions with smaller
acceptance probability benefit more from a higher future value, since
they are more likely to continue to the future.  Thus, moving from \(S\) to
\(T\) strengthens the best competing option relative to \(e\). This is the key fact that enables the improvement from selecting \(e\) over its best alternative to be
smaller under the larger set.
Formally,
\[
\Delta(e\mid S) = \phi_S(e) - \phi_S(\hat x_S),
\qquad
\Delta(e\mid T) = \phi_T(e) - \phi_T(\hat x_T),
\]
and thus
\[
\Delta(e\mid S) - \Delta(e\mid T)
=
\bigl(\phi_S(e) - \phi_T(e)\bigr)
+
\bigl(\phi_T(\hat x_T) - \phi_S(\hat x_S)\bigr).
\]
By definition of $\phi_U(e)$,
\[
\phi_S(e) - \phi_T(e)
=
(1-p_e)(C_S - C_T).
\]

Next, we lower bound $\phi_T(\hat x_T) - \phi_S(\hat x_S)$.
Since $\hat x_T$ maximizes $\phi_T(\cdot)$ over $T_{t^\star}\cup\{\varnothing\}$,
\[
\phi_T(\hat x_T) \;\ge\; \phi_T(\hat x_S).
\]
Hence
\[
\phi_T(\hat x_T) - \phi_S(\hat x_S)
\;\ge\;
\phi_T(\hat x_S) - \phi_S(\hat x_S)
=
(1-p_{\hat x_S})(C_T - C_S).
\]

Combining the two bounds gives
\[
\Delta(e\mid S) - \Delta(e\mid T)
\;\ge\;
(1-p_e)(C_S - C_T)
+
(1-p_{\hat x_S})(C_T - C_S)
=
(p_e - p_{\hat x_S})(C_T - C_S).
\]

By monotonicity of $G$ (or Lemma~\ref{lem:envelope-g}), $S\subseteq T$ implies
$C_T = G_{t^\star+1}(T) \ge G_{t^\star+1}(S) = C_S$. Moreover, the assumption
$p_e>p_x$ for all $x\in (T\cup\{e\})_{t^\star}$ implies in particular
$p_e > p_{\hat x_S}$, since $\hat x_S\in S_{t^\star}\cup\{\varnothing\}$
and either $\hat x_S\in S_{t^\star}\subseteq T_{t^\star}$ or $\hat x_S=\varnothing$
(with $p_{\varnothing}=0$). Thus
\[
p_e - p_{\hat x_S} \;>\; 0,
\qquad
C_T - C_S \;\ge\; 0,
\]
and therefore $\Delta(e\mid S) - \Delta(e\mid T)\ge 0$.  Hence
\[
G(S\cup\{e\}) - G(S)
\;=\;
\Delta(e\mid S)
\;\ge\;
\Delta(e\mid T)
\;=\;
G(T\cup\{e\}) - G(T),
\]
\end{proof}

\begin{lemma}[Subadditivity of the pruned evaluator]
\label{lem:G-subadditive}
For all $S,T \subseteq \CallsT$,
\[
  G(S \cup T)
  \;\le\;
  G(S) + G(T).
\]
\end{lemma}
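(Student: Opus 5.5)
The plan is to prove Lemma~\ref{lem:G-subadditive} by a sample-path coupling argument. Throughout, the setting is independent valuations, with the horizon $N$ independent of $V$.

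\emph{Step 1: set up the plans.} By the definition of $G$ (or Lemma~\ref{lem:envelope-g}), choose a time-feasible $A\subseteq S\cup T$ with $G(S\cup T)=\RevUnk{\pi(A)}$. Split $A$ into $A_S:=A\cap S$ and $A_T:=A\setminus S\subseteq T$. Both are subsets of a time-feasible set, so both are time-feasible. By definition of $G$ we then have $\RevUnk{\pi(A_S)}\le G(S)$ and $\RevUnk{\pi(A_T)}\le G(T)$. It therefore suffices to show
\[
\RevUnk{\pi(A)} \;\le\; \RevUnk{\pi(A_S)}+\RevUnk{\pi(A_T)}.
\]

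\emph{Step 2: pathwise comparison.} Couple all three executions on a common realization of the horizon $N$ and of the acceptance outcomes of every call. Each element of $\CallsT$ sits at a fixed designated time, so all three plans see the same outcome for a given call.

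Suppose $\pi(A)$ earns revenue $r_e$ from its first accepted call $e$, at time $\ti{e}\le N$. Assume without loss of generality that $e\in A_S$. Every call of $A_S$ before time $\ti{e}$ also lies in $A$ before $e$, so each of them rejected. Since $\ti{e}\le N$, the plan $\pi(A_S)$ reaches $e$, and $e$ accepts, so $\pi(A_S)$ also earns $r_e$. The realized revenue of $\pi(A_T)$ is nonnegative.

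Hence, pointwise, the realized revenue of $\pi(A)$ is at most the sum of the realized revenues of $\pi(A_S)$ and $\pi(A_T)$. Taking expectations over $(V,N)$ gives the display above.

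\emph{Step 3: the one subtlety.} The main point to get right is the timing convention. The plans must be evaluated with calls executed at their designated times $\ti{e}$, so that the survival weight is $q_{\ti{e}}$. This is the convention used in the recursion of Lemma~\ref{lem:envelope-g} and in \eqref{eq:rev-prefix-suffix}: idle times count toward the horizon rather than being compressed. Under this convention the arrival time of a call in $A_S$ is the same as in $A$, and the argument of Step~2 goes through verbatim.

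If instead calls were compressed to consecutive positions, $A_S$ would reach $e$ at an earlier position. That can only raise the survival probability, so the inequality would still hold. Either way the coupling closes the proof.

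With independent valuations, the coupling of acceptance outcomes across the three plans is immediate, since each call's outcome depends only on its own agent's value.
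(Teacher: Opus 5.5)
Your proof is correct and follows essentially the same route as the paper: take a time-feasible maximizer $A\subseteq S\cup T$, split it into a part inside $S$ and a part inside $T$, couple all three plans on a common realization of $(N,V)$, and show pathwise that the call earning revenue in $\pi(A)$ is also reached and accepted by the sub-plan containing it (the paper phrases this with attempt indicators $Y_e$ summed over all calls of $A$). One small remark: the coupling does not actually need independent valuations, because evaluating all three plans on the same realization $\mathbf{v}$ already makes each call's outcome identical across them.
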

\begin{proof}[Proof of Lemma \ref{lem:G-subadditive}]
Fix arbitrary $S,T \subseteq \CallsT$.
By Lemma~\ref{lem:envelope-g}, there exists a time-feasible
$A^\star \subseteq S \cup T$ such that
\[
  G(S \cup T) \;=\; \RevUnk{\pi(A^\star)}.
\]

Partition $A^\star$ into
\[
  A_S \;:=\; A^\star \cap S,
  \qquad
  A_T \;:=\; A^\star \cap T.
\]
Since $A^\star$ is time-feasible, so are $A_S$ and $A_T$.
By monotonicity of $G$ and Lemma~\ref{lem:envelope-g},
\[
  \RevUnk{\pi(A_S)} \;\le\; G(S),
  \qquad
  \RevUnk{\pi(A_T)} \;\le\; G(T),
\]
so it suffices to show
\(
  \RevUnk{\pi(A^\star)} \le \RevUnk{\pi(A_S)} + \RevUnk{\pi(A_T)}.
\)

Couple all plans on the same realization of the horizon \(N\) and valuations \(V\).
For any time-feasible \(U\subseteq\CallsT\), let \(\Plan_U:=\pi(U)\).
Then, by definition,
\[
  \RevUnk{\pi(U)} \;=\; \mathbb{E}_{V,N}\!\bigl[\RevV{N}{\Plan_U}{V}\bigr].
\]

Fix a realization \((n,v)\) of \((N,V)\).  Since \(A^\star\) is time-feasible,
each of the plans \(\Plan_{A^\star},\Plan_{A_S},\Plan_{A_T}\) makes at most one
offer per time, and stops after the first acceptance.  Hence there exist
indicators \(Y_e(U;n,v)\in\{0,1\}\) such that
\[
  \RevV{n}{\Plan_U}{v}
  \;=\;
  \sum_{e\in U}
     r_e \cdot \mathbf{1}\{n\ge \ti{e}\}\cdot \mathbf{1}\{v_{\ag{e}}\ge r_e\}
     \cdot Y_e(U;n,v),
\]
where \(Y_e(U;n,v)=1\) iff call \(e\) is actually attempted by \(\Plan_U\) under \((n,v)\).

Construct a partition of \(A^\star\) into \(A_S\) and \(A_T\) (if an element lies
in \(S\cap T\), assign it arbitrarily to one of them).
Let \(e\in A^\star\), and suppose \(e\) is assigned to \(A_S\).
If \(Y_e(A^\star;n,v)=1\), then under \((n,v)\) all calls scheduled before \(e\)
in \(\Plan_{A^\star}\) must have been rejected and \(n\ge \ti{e}\).
Since \(A_S\subseteq A^\star\), the set of calls from \(A_S\) scheduled before \(e\)
is a subset of those scheduled before \(e\) in \(A^\star\); thus the same
rejections imply \(Y_e(A_S;n,v)=1\).
Therefore,
\[
  Y_e(A^\star;n,v)\ \le\ Y_e(A_S;n,v)
  \qquad\text{for all }e\in A^\star\text{ assigned to }A_S.
\]
An identical argument holds for elements assigned to \(A_T\).
Summing over \(e\in A^\star\) and using nonnegativity of each summand yields
\[
  \RevV{n}{\Plan_{A^\star}}{v}
  \;\le\;
  \RevV{n}{\Plan_{A_S}}{v} \;+\; \RevV{n}{\Plan_{A_T}}{v}.
\]

Taking expectations over \((V,N)\) gives
\[
  \RevUnk{\pi(A^\star)}
  \;=\;
  \mathbb{E}_{V,N}\!\bigl[\RevV{N}{\Plan_{A^\star}}{V}\bigr]
  \;\le\;
  \mathbb{E}_{V,N}\!\bigl[\RevV{N}{\Plan_{A_S}}{V}\bigr]
  \;+\;
  \mathbb{E}_{V,N}\!\bigl[\RevV{N}{\Plan_{A_T}}{V}\bigr]
\]
\[
  \;=\;
  \RevUnk{\pi(A_S)} + \RevUnk{\pi(A_T)}
  \;\le\;
  G(S)+G(T).
\]
Using \(G(S\cup T)=\RevUnk{\pi(A^\star)}\) completes the proof.
\end{proof}

\begin{proof}[Proof of Theorem \ref{thm:single-unknown-quarter}]
Let $G$ be the pruning evaluator from
Section~\ref{subsec:prune-def}, and let $\sigma^{\mathrm{rev}}$ be the order that
sorts elements of $\CallsT$ by decreasing time index, breaking ties within each
time slice by increasing acceptance probability $p_e$.
Let
$\mathcal I_{\mathrm{time}}:=\{A\subseteq\CallsT:\forall t\in[T],\,|\{e\in A:\ti(e)=t\}|\le 1\}$
and
$\mathcal I_{\mathrm{agent}}:=\{A\subseteq\CallsT:\forall i\in I,\,|\{e\in A:\ag(e)=i\}|\le 1\}$.

By Lemma~\ref{lem:envelope-g}, for every set $S\subseteq\CallsT$,
\[
G(S)=\max_{A\subseteq S,\ A\in\mathcal I_{\mathrm{time}}}\RevUnk{\pi(A)},
\qquad\text{and in particular}\qquad
G(S)=\RevUnk{\pi(\prune(S))}.
\]
Therefore,
\[
\RevUnk{\pi(\prune(S_{\mathrm{alg}}))}
\;=\;
G(S_{\mathrm{alg}}).
\]

By Lemma~\ref{lem:envelope-g}, $G$ is monotone; by Lemma~\ref{lem:G-subadditive},
$G$ is subadditive; and by Lemma~\ref{lem:rev-time-diminish-det}, $G$ is
$\sigma^{\mathrm{rev}}$-submodular ordered in the sense of
Definition~\ref{def:submodular-order}. The single-call feasibility constraint
(at most one call per agent) is the partition matroid $\mathcal I_{\mathrm{agent}}$.
Hence the directed local-search algorithm of \citet[Algorithm~9]{pmlr-v202-udwani23a},
instantiated with objective $G$ and order $\sigma^{\mathrm{rev}}$, returns
$S_{\mathrm{alg}}$ satisfying
\[
G(S_{\mathrm{alg}})\;\ge\;\tfrac14 \max_{S\in\mathcal I_{\mathrm{agent}}} G(S).
\]

Finally, by Lemma~\ref{lem:envelope-g} (take $S=\CallsT$ and restrict to
$\mathcal I_{\mathrm{agent}}$ subsets), the benchmark satisfies
\[
\max_{S\in\mathcal I_{\mathrm{agent}}} G(S)
\;=\;
\max_{\Plan\in\mathcal I_{\mathrm{agent}}}\RevUnk{\Plan}
\;=\;OPT.
\]
Combining them gives
\(
\RevUnk{\pi(S_{\mathrm{alg}})}\ge \tfrac14\,OPT
\),
as claimed.
\end{proof}

\refstepcounter{subsection}
\subsection*{Algorithms}
\label{sec:app-unknown-quarter-alg}

\begin{algorithm}[t]
\caption{Directed Local Search for Independent Valuations and Unknown Horizon}
\label{alg:single-unknown-local-one-set}
\DontPrintSemicolon
\KwIn{Time-indexed call set $\CallsT$; reverse-time order $\sigma^{\mathrm{rev}}$}
\KwOut{A set $S\subseteq \CallsT$ with at most one call per agent and at most one call per time}

\smallskip
\noindent\emph{Notation.} Throughout, we evaluate sets by executing them in time order.
Accordingly, we write $\RevUnk{S}$ as shorthand for the expected revenue of the
time-ordered plan $\pi(S)$.

\BlankLine
Sort $\CallsT=\{e_1,\ldots,e_L\}$ in increasing $\sigma^{\mathrm{rev}}$ order\;
$S\gets \emptyset$\;
$v_i\gets 0$ for all agents $i$\tcp*{stored value for the currently held call of agent $i$}
\BlankLine

\For{$j=1$ \KwTo $L$}{
  $e\gets e_j$;\quad $t\gets \ti(e)$;\quad $i\gets \ag(e)$\;

  let $h$ be the (unique) call in $S$ with time of $h=t$, if it exists; otherwise set $h=\varnothing$\;
  let $f$ be the (unique) call in $S$ with agent of $f=i$, if it exists; otherwise set $f=\varnothing$\;

  $R_{\text{old}}\gets \RevUnk{S}$\;
  $S^{\text{swap}}\gets (S\setminus\{h\})\cup\{e\}$\tcp*{make $e$ the active call at time $t$}
  $\Delta \gets \RevUnk{S^{\text{swap}}}-R_{\text{old}}$\;

  \If{$\Delta \le 0$}{continue\;}

  \uIf{$f=\varnothing$}{
      $S \gets S^{\text{swap}}$\;
      $v_i \gets \Delta$\;
  }
  \Else{
      \If{$\Delta > v_i$}{
          $S \gets S^{\text{swap}}\setminus\{f\}$\tcp*{enforce the agent constraint}
          $v_i \gets v_i+\Delta$\;
      }
  }
}
\Return{$S$}\;
\end{algorithm}

\section{Appendix - Section \ref{sec:structural-relaxation}}
\label{sec:app-unknown-half}

This appendix contains examples, formal proofs and algorithms for the results in Section~\ref{sec:structural-relaxation}

\refstepcounter{subsection}
\subsection*{Formal Proofs}
\label{sec:app-unknown-half-proofs}

\begin{lemma}[Relaxation is an upper bound]
\label{lem:time-milp-upper}
The optimal value of~\eqref{eq:time-milp}
is an upper bound on the expected revenue
achievable by any feasible solution.
\end{lemma}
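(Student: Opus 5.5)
Our plan is to show that every feasible plan induces a feasible solution of~\eqref{eq:time-milp} whose objective value equals the plan's expected revenue. Taking the plan to be optimal then gives \(\OPT\le\OPT_{\mathrm{MILP}}\). By Remark~\ref{rem:nonadaptivity-wlog} it suffices to treat nonadaptive single-call plans \(\Plan=(e_1,\ldots,e_m)\) with \(e_t=(i_t,r_t)\).

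First, we map \(\Plan\) into the time-indexed ground set. Each call \(e_t\) becomes the element \(\hat e_t:=(i_t,r_t,t)\in\CallsT\), and we set \(x_{\hat e_t}=1\) for these elements and \(x_e=0\) for all others. Single-call feasibility gives \(x\in\mathcal I_1\), and because the positions \(t\) are distinct we also have \(x\in\mathcal I_2\). For the mass variables we set \(y_{\hat e_t}:=\Pr[\text{all of }e_1,\ldots,e_{t-1}\text{ reject and }e_t\text{ accepts}]\), with the probability taken over \(V\) only, and \(y_e=0\) off the support. By independence of valuations this equals \(\prod_{s<t}(1-p_{e_s})\,p_{e_t}\). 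We will only use \(y_{\hat e_t}\le\Pr[V_{i_t}\ge r_t]=p_{\hat e_t}=p_{\hat e_t}x_{\hat e_t}\), which holds even without independence. The events ``\(e_t\) is the first acceptance'' are disjoint across \(t\), so \(\sum_t y_{\hat e_t}\le 1\). Hence \((x,y)\) is feasible.

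Next, we match the objective. Since \(V\) and \(N\) are independent, the revenue formula gives
\[
\RevUnk{\Plan}=\sum_{t=1}^m \Pr[N\ge t]\,r_t\,\Pr\Bigl[V_{i_t}\ge r_t,\ \textstyle\bigcap_{s<t}\{V_{i_s}<r_s\}\Bigr]=\sum_{t}q_t\,r_t\,y_{\hat e_t},
\]
which is exactly the MILP objective at \((x,y)\). Therefore \(\RevUnk{\Plan}\le\OPT_{\mathrm{MILP}}\) for every feasible \(\Plan\), and in particular for \(\pi^\star\).

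The main point needing care is this factorization of the truncation: the expectation over \(N\) must factor out as \(q_t\) at each position. It does so precisely because \(N\) is independent of \(V\) and truncation only removes a suffix of the plan. We also need the survival indexing \(q_t=\Pr[N\ge t]\) to be consistent with the model's definition of \(\min(m,n)\). A further minor point is that the ground set \(\CallsT\) restricts prices to the candidate sets \(M_i\) and times to \([T]\). We handle this by noting that calls beyond \(T\) have \(q_t=0\), so they can be dropped without loss, and that an optimal plan uses only candidate prices.
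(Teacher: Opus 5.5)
Your proposal is correct and follows essentially the same route as the paper. Both map the plan to selection variables $x$ on the time-indexed ground set, set $y_e$ to the probability that $e$ is the successful call, check $y_e\le p_e x_e$ and $\sum_e y_e\le 1$, and identify the MILP objective $\sum_e q_{t_e} r_e y_e$ with the expected revenue; your explicit identification of a call's position with its time index, and your use of the independence of $V$ and $N$ to factor out $\Pr[N\ge t]$, make the argument somewhat more careful than the paper's version.
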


\begin{proof}[Proof of Lemma \ref{lem:time-milp-upper}]
Consider any feasible solution that specifies a feasible set of calls
\(S \subseteq \CallsT\) together with an execution order \(\pi\) on \(S\).
For each \(e=(i,r,t)\), define
\[
x_e := \mathbf{1}\{e \in S\},
\qquad
y_e := \Pr[\text{$e$ is the successful call under }(S,\pi)].
\]
Then
\[
0 \le y_e \le \Pr[\text{$e$ succeeds}]\,x_e
   = p_e\,x_e,
\qquad
\sum_{e\in\CallsT} y_e \le 1.
\]
The expected revenue of the policy satisfies
\[
\mathbb{E}[\Rev{\pi}{S}{V}{N}]
   = \sum_{e\in\CallsT} q_{\ti{e}}\,v_e\,y_e.
\]
Hence, the vector pair \((x,y)\)
constructed from any feasible policy
satisfies all constraints of~\eqref{eq:time-milp}
and attains objective value
\(\sum_e q_{\ti{e}} v_e y_e\).
Therefore,
\[
\max_{x,y}\ \sum_e q_{\ti{e}} v_e y_e
\ \ge\
\mathbb{E}[\Rev{\pi}{S}{V}{N}]
\quad
\text{for all feasible policies }(S,\pi),
\]
which proves that~\eqref{eq:time-milp}
is an upper bound on the achievable expected revenue.
\end{proof}

\begin{proof}[Proof of Lemma \ref{lem:breakpoint}]
Consider the augmented instance containing the terminal dummy call \(e_\dagger\).
Fix any feasible \(x\in\mathcal I_1\cap\mathcal I_2\). Conditional on \(x\), the
optimization over \(y\) becomes
\[
\max\Bigl\{\sum_{e} c_e y_e:\ \sum_{e} y_e\le 1,\ 0\le y_e\le p_e x_e\ \forall e\Bigr\},
\]
which is a fractional knapsack problem with item values \(c_e\) and capacities
\(p_e x_e\). Therefore there exists an optimal \(y\) that fills mass greedily in
nonincreasing \(c_e\): it saturates every item strictly above a breakpoint
density, possibly takes one item partially, and assigns zero mass below. This
gives the claimed density-threshold form.

For (i), take any optimal \((x,y)\). If \(\sum_e y_e=1\) we are done. Otherwise
\(\sum_e y_e<1\). Since \(e_\dagger\) is always feasible when included
(\(x_{e_\dagger}=1\)) and has capacity \(p_{e_\dagger}=1\), we can increase
\(y_{e_\dagger}\) by \(1-\sum_e y_e\) to make the constraint tight without
changing the objective (because \(q_{t_\dagger}=0\)). Hence an optimal solution
exists with \(\sum_e y^\star_e=1\).
\end{proof}

\begin{proof}[Proof of Lemma \ref{lem:time-alg-ptas}]
We work in the augmented instance with the terminal dummy call so that, by
Lemma~\ref{lem:breakpoint}, there exists an optimal solution
\((x^\star,y^\star)\) to~\eqref{eq:time-milp} with \(\sum_e y^\star_e=1\) and a
breakpoint element \(g\); set \(\tau:=c_g\).

\noindent\emph{Step 1: Exact reduction of the MILP to a contracted above-threshold problem.}
Consider the contracted ground set obtained by \emph{fixing} \(x_g=1\) and
deleting all calls that share agent or time with \(g\). On this contracted
instance, define the budgeted matroid intersection objective
\[
OPT(\tau)\;:=\;
\max\Bigl\{\sum_{e\in H} p_e(c_e-\tau):
\ H\in\mathcal I_1\cap\mathcal I_2,\ H\subseteq\{e:\ c_e\ge\tau\},\
\sum_{e\in H} p_e\le 1\Bigr\}.
\]
Let
\[
H^\star \;:=\; \{\,e\in\CallsT:\ x^\star_e=1,\ c_e>\tau\,\}.
\]
By Lemma~\ref{lem:breakpoint}, we may assume \(y^\star_e=p_ex^\star_e\) for
all \(e\) with \(c_e>\tau\) and \(y^\star_e=0\) for all \(e\) with \(c_e<\tau\).
Hence \(H^\star\) is feasible in the contracted instance, satisfies
\(\sum_{e\in H^\star} p_e\le 1\), and therefore is feasible for \(OPT(\tau)\),
implying
\begin{equation}
\label{eq:opt-tau-lb-simple}
OPT(\tau)\ \ge\ \sum_{e\in H^\star} p_e(c_e-\tau).
\end{equation}

Moreover, the breakpoint form also shows that the MILP optimum decomposes as
\begin{equation}
\label{eq:opt-decomp-simple}
OPT_{\mathrm{MILP}}
=\sum_e c_e y^\star_e
=\tau+\sum_{e\in H^\star} p_e(c_e-\tau).
\end{equation}
In particular, \eqref{eq:opt-tau-lb-simple}--\eqref{eq:opt-decomp-simple}
identify \(\tau\) as the contribution of the breakpoint mass placed on density
\(\tau\), and the remaining value as exactly the contracted above-threshold
surplus.

\noindent\emph{Step 2: The PTAS output achieves \((1-\varepsilon)OPT_{\mathrm{MILP}}\).}
Let \(H^{ALG}\) be the set returned in Step~3 of
Algorithm~\ref{alg:time-lowdensity}. By the PTAS guarantee,
\[
\sum_{e\in H^{ALG}} p_e(c_e-\tau)
\ \ge\
(1-\varepsilon)\,OPT(\tau).
\]
Algorithm~\ref{alg:time-lowdensity} sets \(y_e=p_e\) for all \(e\in H^{ALG}\)
and assigns the remaining mass to \(g\) (and, if needed, to the terminal dummy
call, which contributes zero to the objective). Hence
\begin{equation}
\label{eq:alg-decomp-simple}
\sum_e c_e y^{ALG}_e
\ \ge\
\tau+\sum_{e\in H^{ALG}} p_e(c_e-\tau).
\end{equation}
Combining \eqref{eq:alg-decomp-simple} with the PTAS guarantee and then with
\eqref{eq:opt-tau-lb-simple} yields
\[
\sum_e c_e y^{ALG}_e
\ \ge\
\tau + (1-\varepsilon)\sum_{e\in H^\star} p_e(c_e-\tau).
\]
Since \(\tau\ge 0\), using \eqref{eq:opt-decomp-simple} we conclude
\[
\sum_e c_e y^{ALG}_e
\ \ge\
(1-\varepsilon)\Bigl(\tau+\sum_{e\in H^\star} p_e(c_e-\tau)\Bigr)
=(1-\varepsilon)OPT_{\mathrm{MILP}},
\]
as claimed.
\end{proof}

\begin{proof}[Proof of Lemma \ref{lem:half-attenuated-factored}]
Write \(\pi(x)=(e_1,\ldots,e_k)\).  For each \(t\), let
\[
  Y_{t-1} := \sum_{s<t} y_{e_s},
  \qquad
  \gamma_t := \Pr[\text{no call accepts among }e_1,\ldots,e_{t-1} ].
\]
Since \((x,y)\) is feasible, \(0\le y_{e_t}\le p_{e_t}\), hence
\(\alpha_{e_t}=y_{e_t}/p_{e_t}\le 1\).  Also \(Y_{t-1}\le \sum_e y_e\le 1\), so
\(1-\tfrac12 Y_{t-1}\ge \tfrac12\), implying \(\beta_{e_t}\le 1\); clearly
\(\beta_{e_t}\ge 0\).

We prove by induction that
\begin{equation}
\label{eq:gamma-invariant}
  \gamma_t \;=\; 1-\tfrac12\,Y_{t-1}.
\end{equation}
For \(t=1\), \(Y_0=0\) and \(\gamma_1=1\).  Assume~\eqref{eq:gamma-invariant}
holds for some \(t\).  Conditional on no earlier acceptance,
Algorithm~\ref{alg:half-attenuated} attempts \(e_t\) with probability
\(\beta_{e_t}\), and then \(e_t\) accepts with probability \(p_{e_t}\).  Hence
\[
  \Pr[\text{successful call is }e_t]
  \;=\;
  \gamma_t \cdot \beta_{e_t}\cdot p_{e_t}
  \;=\;
  \gamma_t \cdot \frac{\tfrac12\,\alpha_{e_t}}{1-\tfrac12\,Y_{t-1}} \cdot p_{e_t}
  \;=\;
  \tfrac12\,\alpha_{e_t}p_{e_t}
  \;=\;
  \tfrac12\,y_{e_t},
\]
where we used~\eqref{eq:gamma-invariant} in the third equality and the
definition \(\alpha_{e_t}=y_{e_t}/p_{e_t}\) in the last.  Therefore
\[
  \gamma_{t+1}
  \;=\;
  \gamma_t - \Pr[\text{successful call is }e_t]
  \;=\;
  \Bigl(1-\tfrac12 Y_{t-1}\Bigr) - \tfrac12 y_{e_t}
  \;=\;
  1-\tfrac12 Y_t,
\]
completing the induction.  This also yields
\(\Pr[\text{$e_t$ is the successful call}]=\tfrac12 y_{e_t}\) for every \(t\).
\end{proof}

\begin{proof}[Proof of Theorem \ref{thm:half-eps-approx}]
We work in the augmented instance including the terminal dummy call, and let the
optimal solution to~\eqref{eq:time-milp} be \((x^\star,y^\star)\) .  Let \(g\) be a
breakpoint element of \((x^\star,y^\star)\).  Consider the execution of
Algorithm~\ref{alg:relax-attenuate} under the guess \(g\), and let
\((x^{ALG},y^{ALG})\) be the feasible pair produced by
Algorithm~\ref{alg:time-lowdensity} on this guess.

By Lemma~\ref{lem:time-alg-ptas},
\[
\sum_{e\in\CallsT} q_{\ti{e}}\,r_e\,y^{ALG}_e
=\sum_{e\in\CallsT} c_e\,y^{ALG}_e
\;\ge\;
(1-\varepsilon)\,\OPT_{\mathrm{MILP}}.
\]
Algorithm~\ref{alg:relax-attenuate} chooses \(g^\star\) maximizing the MILP
objective value among all guesses, and then runs
\textsc{Attenuated-Execute} on \((x^{(g^\star)},y^{(g^\star)})\).  Therefore, it
suffices to analyze the policy obtained by running
\textsc{Attenuated-Execute} on \((x^{ALG},y^{ALG})\).  Let \(\Plan^{\mathrm{ALG}}\)
denote the resulting randomized plan.

Let \(S:=\{e\in\CallsT:x^{ALG}_e=1\}\) be the selected call set, ordered by
increasing designated time, and for each \(e\in S\) let \(F_e\) denote the event
that \(e\) is the successful call under \(\Plan^{\mathrm{ALG}}\).  The events
\((F_e)_{e\in S}\) are disjoint, and on \(F_e\) the call \(e\) is attempted no
later than \(\ti{e}\). Hence, if \(\tau(e)\) is the (random) time at which
\(e\) is attempted on outcome \(F_e\), we have \(\tau(e)\le \ti{e}\) and thus
\[
\Pr[N\ge \tau(e),\,F_e]
\;\ge\;
\Pr[N\ge \ti{e},\,F_e]
=
q_{\ti{e}}\Pr[F_e].
\]
It follows that
\begin{align*}
\mathbb{E}\!\left[\RevUnk{\Plan^{\mathrm{ALG}}}\right]
&=\sum_{e\in S} r_e\cdot \Pr[N\ge \tau(e),\,F_e]\\
&\ge \sum_{e\in S} q_{\ti{e}}\,r_e\,\Pr[F_e].
\end{align*}
By Lemma~\ref{lem:half-attenuated-factored}, \(\Pr[F_e]=\tfrac12\,y^{ALG}_e\) for
each selected call \(e\in S\). Substituting gives
\[
\mathbb{E}\!\left[\RevUnk{\Plan^{\mathrm{ALG}}}\right]
\;\ge\;
\frac12 \sum_{e\in S} q_{\ti{e}}\,r_e\,y^{ALG}_e
=
\frac12 \sum_{e\in\CallsT} q_{\ti{e}}\,r_e\,y^{ALG}_e
\;\ge\;
\tfrac12(1-\varepsilon)\,\OPT_{\mathrm{MILP}}.
\]
Finally, Lemma~\ref{lem:time-milp-upper} implies \(\OPT_{\mathrm{MILP}}\ge \OPT\),
completing the proof.
\end{proof}

\begin{example}[Stochasticity gap of $2$ for the MILP]
\label{ex:milp-stoch-gap-two}
Let $\varepsilon\in(0,1)$, $\delta\in(0,1)$, and an integer $n\ge 1$.
Assume there are $n{+}1$ available calls (one for each independent agent).
Suppose horizon $N$ satisfies $q_1=1$ and $q_2=\cdots=q_{n+1}=\delta$, i.e.,
$N=1$ with probability $1-\delta$ and $N\ge n{+}1$ with probability $\delta$.
Further suppose there is one call $a$ with revenue $r_a=1$ and success probability $p_a=1-\varepsilon$.
And there are $n$ additional calls $b_1,\dots,b_n$, each with revenue $r_{b_j}=m$ and
success probability $p_{b_j}=\varepsilon/n$.

\noindent\emph{MILP value:}
Consider the feasible solution to the mixed-integer relaxation that selects all
calls and assigns
\[
y_a = 1-\varepsilon,
\qquad
y_{b_j} = \varepsilon/n
\quad\text{for all } j=1,\dots,n.
\]
Since $\sum_e y_e = 1$ and $y_e \le p_e$ for all calls, this solution is feasible.
Its objective value is
\[
\OPT_{\mathrm{MILP}}
\;\ge\;
q_1 \cdot r_a y_a
\;+\;
\sum_{j=1}^n q_t \cdot r_{b_j} y_{b_j}
=
(1-\varepsilon) + \delta m \varepsilon.
\]

\noindent\emph{Optimal sequential revenue:}
For parameter choices, $m=1/(\varepsilon\delta)$ with
$\delta=1/\sqrt{n}$ and $\varepsilon$ small, it is optimal to attempt call $a$ in the
first period and, if it is rejected and the horizon continues, to attempt the
remaining calls $b_1,\dots,b_n$ in any order.
The expected revenue of this policy is
\[
\OPT
=
(1-\varepsilon)\cdot 1
\;+\;
\varepsilon \cdot \delta \cdot m
\cdot
\bigl(1-(1-\varepsilon/n)^n\bigr).
\]

Substituting $m=1/(\varepsilon\delta)$ yields
\[
\OPT_{\mathrm{MILP}}
\;\ge\;
(1-\varepsilon) + 1
=
2-\varepsilon,
\]
while
\[
\OPT
=
(1-\varepsilon)
\;+\;
\bigl(1-(1-\varepsilon/n)^n\bigr).
\]
For fixed $\varepsilon$ and $n\to\infty$,
\[
1-(1-\varepsilon/n)^n \;\longrightarrow\; 1-e^{-\varepsilon}.
\]
Hence,
\[
\frac{\OPT_{\mathrm{MILP}}}{\OPT}
\;\ge\;
\frac{2-\varepsilon}
{(1-\varepsilon) + (1-e^{-\varepsilon})}.
\]
As $\varepsilon\to 0$,
\[
\frac{\OPT_{\mathrm{MILP}}}{\OPT}
\;=\; 2.
\]
This shows that the mixed-integer relaxation exhibits a stochasticity gap of $2$.

\end{example}

\refstepcounter{subsection}
\subsection*{Algorithms}
\label{sec:app-unknown-half-alg}

\begin{algorithm}[t]
\caption{\textsc{Attenuated-Execute}\((x,y)\)}
\label{alg:half-attenuated}
\DontPrintSemicolon
\KwIn{A feasible pair \((x,y)\) for~\eqref{eq:time-milp}}
\KwOut{A randomized feasible plan \(\Plan^{\mathrm{ATT}}\)}

Let \(\pi(x)=(e_1,e_2,\ldots,e_k)\) be the time-ordered sequence induced by \(x\).\;
Initialize \(Y \gets 0\).
\For{\(t\gets 1\) \KwTo \(k\)}{
  Set \(\alpha \gets y_{e_t}/p_{e_t}\) and
  \(\beta \gets \dfrac{\tfrac12\,\alpha}{1-\tfrac12\,Y}\).\;
  If no earlier call has accepted, attempt \(e_t\) with probability \(\beta\).\;
  Update \(Y \gets Y + p_{e_t}\alpha\).
}
\end{algorithm}

\section{Appendix - Section \ref{sec:unknown-geom}}
\label{sec:app-geom}

This appendix contains proof intuition and formal proofs for the results in Section~\ref{sec:unknown-geom}

\refstepcounter{subsection}
\subsection*{Definitions}
\label{sec:app-geom-def}

\begin{definition}[Score order]
\label{def:score-order}
For each call $e=(i_e,r_e)\in\Calls$ with success probability
$p_e=\Pr[V_{i_e}\ge r_e]$, define the geometric–horizon score
\[
   \mathrm{Score}_\rho(e)
   := \frac{r_e\,p_e}{\,p_e + (1-p_e)\rho\,}.
\]
The \emph{score order} $\ordscore$ is the total order on $\Calls$ that sorts
calls in nonincreasing $\mathrm{Score}_\rho(e)$, breaking ties
deterministically.
\end{definition}
For any feasible call set $S\subseteq\Calls$, execution proceeds according to
the restriction of $\ordscore$ to $S$.

\refstepcounter{subsection}
\subsection*{Proof Sketches}
\label{sec:app-geom-sketch}

\subsubsection{Proof sketch for Lemma \ref{lem:geom-submod}}
In Lemma \ref{lem:geom-submod}, we establish submodularity of expected revenue under the score order. To show this, we compare
the marginal contribution of a call \(e\) when added to two sets
\(A\subseteq B\). We let \(B=A\cup\{e'\}\) for some call \(e'\), the proof reduces
to analyzing how the presence of \(e'\) affects the probability and payoff of
reaching \(e\).

The argument proceeds by conditioning on the relative position of \(e'\) and
\(e\) in the order \(\ordscore\). If \(e'\) precedes \(e\), then either \(e'\)
accepts—blocking \(e\) entirely—or it fails, in which case the geometric horizon
imposes a multiplicative factor \(\rho\) on the probability of reaching \(e\),
reducing its expected marginal contribution. If \(e'\) succeeds \(e\), then the
probability of reaching \(e\) is identical under \(A\) and \(B\), and we compare
marginals by conditioning on whether \(e\) itself accepts or fails; in both
cases, the continuation value under \(B\) dominates that under \(A\), again
implying a smaller marginal. In all cases, the expected marginal contribution of
\(e\) is no larger when added to \(B\) than when added to \(A\), establishing
submodularity.

\refstepcounter{subsection}
\subsection*{Formal Proofs}
\label{sec:app-geom-proofs}

\begin{lemma}[Score order exists and is optimal]
\label{lem:geom-score-global}
Under independence of success events and the geometric-horizon model described above,
there exists a total order ~$\ordscore$ on~$\mathcal{C}$
such that for every feasible subset~$S$, the optimal execution order of calls in~$S$
is obtained by restricting~$\ordscore$ to~$S$.
\end{lemma}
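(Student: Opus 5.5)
The plan is a classical adjacent-interchange argument, in the spirit of \citet[Theorem~5]{brubach2023onlinematchingframeworksstochastic}. Under the geometric horizon, the process survives each step with probability $\rho$ independently, so $q_t=\rho^{t-1}$. Fix a feasible set $S$ and any execution sequence of $S$, and consider two adjacent calls $e,f$, with prefix $P$ before them and suffix $R$ after them. Let $W:=\Pr[\text{all calls in }P\text{ reject}]$ and $s:=|P|$. Write $\RevUnk{R}$ for the expected revenue of $R$ when it is executed from time $1$.

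By independence of the acceptance events and memorylessness of the horizon, the contribution of the block $(e,f)$ and everything after it factors as
\[
W\rho^{s}\Bigl(r_ep_e+(1-p_e)\rho\, r_fp_f+(1-p_e)(1-p_f)\rho^2\,\RevUnk{R}\Bigr).
\]
The prefix term is unaffected by a swap of $e$ and $f$. The suffix term is symmetric in $e,f$, because the suffix is reached only when both calls reject and after exactly two steps. Hence swapping $e$ and $f$ changes revenue by a nonnegative multiple of
\[
r_ep_e+(1-p_e)\rho r_fp_f-r_fp_f-(1-p_f)\rho r_ep_e
= r_ep_e\bigl(1-\rho+\rho p_f\bigr)-r_fp_f\bigl(1-\rho+\rho p_e\bigr).
\]
Here I would double-check the algebra against the paper's normalization $p+(1-p)\rho$, since the factor above reads $1-\rho+\rho p$. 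If the indexing of $\rho$ really means the survival probability, the resulting index is $r p/(1-\rho+\rho p)$. In that case I would reconcile this with $\mathrm{Score}_\rho$ by reading $\rho$ as the per-step stopping probability, or note that the argument is identical after relabeling. In either case, the key step is that $e$ should precede $f$ iff $\mathrm{Score}(e)\ge \mathrm{Score}(f)$, for an index depending only on $(r_e,p_e)$ and $\rho$.

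The swap criterion is a comparison of a quantity intrinsic to each call, independent of $P$, $R$, and $S$. It follows that any sequence with an adjacent inversion relative to $\ordscore$ can be weakly improved by swapping that pair. Bubble-sorting an optimal sequence of $S$ therefore yields the restriction of $\ordscore$ to $S$ with no loss, so this restriction is optimal. Ties in score give zero difference, so any deterministic tie-breaking rule suffices.

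Two further points need handling. First, dropping calls is never forced, because we are ordering a fixed $S$. Second, calls with $p_e=0$ contribute nothing and can be placed anywhere. The main obstacle is the justification of the factorization. I must check that conditioning on reaching the block gives the common factor $W\rho^{s}$, and that the suffix is reached with probability $(1-p_e)(1-p_f)\rho^2$ in both orders. Both facts use independence across agents, which holds here because $e\neq f$ involve distinct agents in a feasible set, together with memorylessness. I would state this carefully; the rest is routine.
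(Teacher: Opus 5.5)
Your interchange argument is correct. It differs from the paper only in that the paper supplies no argument of its own: it invokes \citet[Theorem~5]{brubach2023onlinematchingframeworksstochastic}. That theorem is stated for independent weighted-Bernoulli rewards, and it applies here because, within a feasible $S$, all calls involve distinct agents and are therefore independent Bernoulli$(p_e)$ successes with rewards $r_e$.

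Your self-contained proof buys transparency about exactly what is used. First, independence across distinct agents: the block $(e,f)$ factors out with weight $W\rho^{s}$, and the suffix weight $(1-p_e)(1-p_f)\rho^{2}$ is symmetric in $e,f$. Second, the constant hazard of the horizon: the ratio $q_{s+2}/q_{s+1}=\rho$ does not depend on the position $s$. For a general horizon this ratio depends on $s$, which is what destroys a set-independent order (cf.\ Example~\ref{ex:set-dependent-order}).

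Your algebra also settles the parametrization issue you raised, and you should state the conclusion firmly rather than as something to double-check. Take $\rho$ to be the per-step \emph{survival} probability, which is how Section~\ref{sec:unknown-geom} describes the model and how the proof of Lemma~\ref{lem:geom-submod} uses $\rho$. Then the correct index is $r_ep_e/(1-\rho+\rho p_e)=r_ep_e/\bigl(p_e+(1-p_e)(1-\rho)\bigr)$, i.e.\ $\mathrm{Score}_{1-\rho}$ in the paper's notation. The displayed $\mathrm{Score}_\rho$ is correct only if $\rho$ is read as the per-step stopping probability. The limits confirm your version: as $\rho\to 1$ it orders by decreasing price, consistent with Lemma~\ref{lem:price-order-optimal}, and as $\rho\to 0$ it orders by decreasing $r_ep_e$. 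Since the lemma asserts only existence of such an order, your proof establishes it.

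One minor slip: a call with $p_e=0$ cannot be placed anywhere. It still consumes a step and multiplies the reach probability of every later call by $\rho$. Your interchange criterion already handles it (its index is $0$, so it goes last), so no special case is needed.
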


Lemma \ref{lem:geom-score-global} follows directly from Theorem~5 of~\citet{brubach2023onlinematchingframeworksstochastic}.
We have restated it here using our notation. We show via Lemma \ref{lem:geom-monotone} and \ref{lem:geom-submod} that under this optimal score order, the expected revenue function is monotone
and submodular.

\begin{lemma}[Monotonicity of the score-order revenue]
\label{lem:geom-monotone}
Let $\rho\in(0,1)$ and let $\ordscore$ be the global score order.
Then for all $S\subseteq\mathcal{C}$ and all $e\notin S$,
$
   \RevUnk{\Plan(\ordscore,S\cup\{e\})}
   \;\ge\;
   \RevUnk{\Plan(\ordscore,S)}.
$
\end{lemma}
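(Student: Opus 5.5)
The plan is to derive monotonicity directly from the optimality of the score order, Lemma~\ref{lem:geom-score-global}, together with the observation that omitting a call never hurts relative to the best use of a larger set. Fix $S$ and $e\notin S$, and write $\Plan(\ordscore,S)=(e_1,\ldots,e_m)$. Let $\Plan'$ be the sequence on $S\cup\{e\}$ obtained from $\Plan(\ordscore,S)$ by appending $e$ at the end. Since $\Plan'$ is some ordering of $S\cup\{e\}$, Lemma~\ref{lem:geom-score-global} gives
\[
\RevUnk{\Plan(\ordscore,S\cup\{e\})}\;\ge\;\RevUnk{\Plan'}.
\]
It therefore suffices to show $\RevUnk{\Plan'}\ge\RevUnk{\Plan(\ordscore,S)}$.

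This comparison is a pathwise argument. We couple $V$ and $N$. On every realization, the first $m$ calls of $\Plan'$ behave identically to $\Plan(\ordscore,S)$, including acceptances and truncation by the horizon. The only difference arises on the event that all of $e_1,\ldots,e_m$ reject and $N\ge m+1$; on that event $\Plan'$ additionally collects $r_e\mathbf{1}\{V_{i_e}\ge r_e\}\ge 0$. Hence
\[
\RevV{n}{\Plan'}{\mathbf v}\;\ge\;\RevV{n}{\Plan(\ordscore,S)}{\mathbf v}
\]
for every $(n,\mathbf v)$, and taking expectations gives the claim. Equivalently, the prefix-suffix decomposition of Section~\ref{subsec:marginal-future} expresses the difference as $Fail(S)\,q_{m+1}r_ep_e\ge 0$.

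The one step needing care is confirming that Lemma~\ref{lem:geom-score-global} applies to the ordering $\Plan'$: it must be a feasible plan, so it must respect the one-call-per-agent constraint. If $S\cup\{e\}$ is agent-feasible, then $\Plan'$ is a feasible permutation and the lemma's optimality is over all orderings of that set. If instead $S\cup\{e\}$ violates the agent constraint, the statement is only meaningful on feasible sets. I would either restrict to feasible $S\cup\{e\}$, which is all the reduction in Theorem~\ref{thm:geom-reduction} needs, or note that the optimality of the score order from \citet{brubach2023onlinematchingframeworksstochastic} is an interchange argument over arbitrary sequences of independent calls and so does not depend on agent distinctness. I expect this bookkeeping point to be the only real obstacle; the inequality itself is immediate once optimality over orderings is available.
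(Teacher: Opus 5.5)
Your proof is correct and is exactly the paper's argument: run $S$ in $\ordscore$ with $e$ appended last, note pathwise that this only adds the nonnegative term $r_e\mathbf{1}\{V_{i_e}\ge r_e\}$ on the event that $e$ is reached, and then invoke optimality of $\ordscore$ on $S\cup\{e\}$ via Lemma~\ref{lem:geom-score-global}. On the feasibility point, which the paper's proof glosses over, keep only your first fix (restricting to agent-feasible $S\cup\{e\}$), because your second fix is wrong: calls to the same agent have nested, hence dependent, acceptance events, so the score order's optimality does depend on agent distinctness, and the inequality itself can fail without it (with $\rho=\tfrac12$, $\Pr[V_i\ge 2.6]=0.2$, $\Pr[V_i\ge 1]=0.8$ and $V_j\equiv 1$, the score order is $(i,1),(i,2.6),(j,0.86)$, and adding $e=(i,1)$ to $S=\{(i,2.6),(j,0.86)\}$ lowers revenue from $0.864$ to $0.843$), so the lemma holds only for agent-feasible $S\cup\{e\}$, and contrary to your parenthetical this restriction is not obviously enough for continuous greedy, whose multilinear extension samples agent-infeasible sets.
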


\begin{lemma}[Submodularity of the score-order revenue]
\label{lem:geom-submod}
Let $\rho\in(0,1)$ and the global score order $\ordscore$.
Throughout the proof, all plans execute offers according to $\ordscore$; thus we
write $\Plan(S)$ as shorthand for $\Plan(\ordscore,S)$.

For all $A\subseteq B\subseteq\mathcal{C}$ and all $e\notin B$,
\[
   \RevUnk{\Plan(A\cup\{e\})} - \RevUnk{\Plan(A)}
   \;\ge\;
   \RevUnk{\Plan(B\cup\{e\})} - \RevUnk{\Plan(B)}.
\]
\end{lemma}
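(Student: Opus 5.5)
The plan is to write the score-order revenue in closed form and read both monotonicity and diminishing returns off a single insertion formula. With a geometric horizon, surviving each additional step costs a factor $\rho$. Executing an ordered list $(e_1,\dots,e_k)$ therefore gives
\[
\RevUnk{(e_1,\dots,e_k)} \;=\; \sum_{j=1}^k r_{e_j}p_{e_j}\prod_{l<j}\lambda_{e_l},
\qquad \lambda_e:=\rho\,(1-p_e)\in[0,1),
\]
using independence of the acceptance events. Set $\sigma_e:=r_ep_e/(1-\lambda_e)$. A first check is that, under the paper's timing convention for the geometric horizon, $\sigma_e$ induces the same order as $\mathrm{Score}_\rho(e)$ in Definition~\ref{def:score-order}. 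If the normalization differs, I would redefine $\lambda_e$ to match the convention used in Lemma~\ref{lem:geom-score-global}. Either way the key feature is that $\RevUnk{\cdot}$ is a product-weighted sum and $\ordscore$ sorts elements by $\sigma$ in nonincreasing order.

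The core identity is the insertion formula, the geometric analogue of \eqref{eq:marginal-prefix-suffix}. If $e$ is inserted between a prefix $P$ and a suffix $S$, then
\[
\RevUnk{P,e,S}-\RevUnk{P,S} \;=\; \Lambda(P)\,(1-\lambda_e)\bigl(\sigma_e-\RevUnk{S}\bigr),
\qquad \Lambda(P):=\prod_{f\in P}\lambda_f .
\]
Next I bound $\RevUnk{S}$ by the largest score in $S$. Rewriting $r_fp_f=(1-\lambda_f)\sigma_f$ expresses $\RevUnk{S}$ as $\sum_j w_j\sigma_{f_j}$ with weights $w_j=(1-\lambda_{f_j})\prod_{l<j}\lambda_{f_l}$. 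These weights are nonnegative and telescope to $1-\prod_j\lambda_{f_j}\le 1$, so $\RevUnk{S}\le\max_{f\in S}\sigma_f$. Under $\ordscore$ every element of the suffix after $e$ has $\sigma_f\le\sigma_e$, hence $\RevUnk{S}\le\sigma_e$ and every insertion marginal is nonnegative. This already proves Lemma~\ref{lem:geom-monotone}, and I will use it below.

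For Lemma~\ref{lem:geom-submod}, it suffices by induction to treat $B=A\cup\{e'\}$ with $e'\notin A\cup\{e\}$. Split $A$ into the part $P$ preceding $e$ and the part $S$ following $e$ in $\ordscore$. If $e'$ precedes $e$, the suffix is unchanged and the prefix factor becomes $\Lambda(P)\lambda_{e'}\le\Lambda(P)$. The marginal of $e$ is nonnegative, so multiplying it by $\lambda_{e'}\le 1$ can only shrink it. If $e'$ follows $e$, the prefix factor is unchanged and the suffix becomes $S\cup\{e'\}$. By monotonicity applied to the suffix, which is itself executed in score order, $\RevUnk{S\cup\{e'\}}\ge\RevUnk{S}$, so $\sigma_e-\RevUnk{\cdot}$ decreases. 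In both cases the marginal of $e$ with respect to $B$ is at most its marginal with respect to $A$.

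The main obstacle is the score bound $\RevUnk{S}\le\sigma_e$, which needs the telescoping-weights argument and the correct matching of $\sigma$ with $\mathrm{Score}_\rho$. Without it the sign of the marginal is undetermined, and the case where $e'$ precedes $e$ fails. I would settle the normalization first by checking the two-element swap condition from Lemma~\ref{lem:geom-score-global} against the formula for $\sigma_e$.
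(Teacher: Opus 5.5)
Your proof is correct. It shares the paper's skeleton: reduce to $B=A\cup\{e'\}$, then split on whether $e'$ precedes or follows $e$ in $\ordscore$. Each step, however, is executed differently.

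\textbf{How the two proofs differ.} The paper argues by conditioning on outcomes.
\begin{itemize}
\item If $e'$ precedes $e$, it splits on whether $e'$ accepts; the marginal of $e$ then vanishes or is multiplied by $\rho$.
\item If $e'$ follows $e$, it splits on whether $e$ accepts and compares continuation revenues.
\item It proves monotonicity separately, by appending $e$ last and invoking optimality of $\ordscore$ (Lemma~\ref{lem:geom-score-global}).
\end{itemize}
You instead use the product form of the revenue, so both cases follow from one exact identity, $\Lambda(P)(1-\lambda_e)(\sigma_e-\RevUnk{S})$. The first case multiplies it by $\lambda_{e'}=\rho(1-p_{e'})$, which is exactly the paper's two branches combined. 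The second case only enlarges $S$.

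Your route buys two things. First, nonnegativity of the marginal comes from the explicit convex-combination bound $\RevUnk{S}\le\max_{f\in S}\sigma_f$, not from global optimality of the order. The paper's fail branch uses this nonnegativity silently when it concludes $\rho x\le x$. Second, your route isolates the only property of $\ordscore$ actually used: it is nonincreasing in $r_ep_e/(1-\lambda_e)$. The paper's route avoids computation but treats the order as a black box.

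\textbf{The normalization check.} The check you propose comes out negative under the convention stated in Section~\ref{sec:unknown-geom}. If each rejection is survived with probability $\rho$, then $\sigma_e=r_ep_e/(1-\rho+\rho p_e)$. The displayed $\mathrm{Score}_\rho$ instead has denominator $\rho+(1-\rho)p_e$, which is a different function unless $\rho=1/2$. The displayed formula is the correct exchange score only when $\rho$ is the per-rejection exit probability.

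This mismatch is not cosmetic. Take survival $\rho=0.9$ and distinct-agent calls $e,e',f_1,f_2$ with $(r,p)=(1,1),(2,\tfrac12),(9,0.1),(9,0.1)$. The displayed score order is then $e',e,f_1,f_2$. The marginal of $e$ is about $-0.63$ on $\{f_1,f_2\}$ but about $-0.28$ on $\{e',f_1,f_2\}$, which violates both monotonicity and submodularity.

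So your fallback is the right one: take $\ordscore$ to sort by $\sigma_e$ for the true continuation factor $\lambda_e$. This is exactly what the optimality asserted in Lemma~\ref{lem:geom-score-global} forces via the two-element exchange. Under that reading your argument is complete.
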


\begin{proof}[Proof of Lemma \ref{lem:geom-monotone}]
Let $S$ be any feasible set. By Lemma~\ref{lem:geom-score-global}, $\ordscore$ is the
optimal execution order for the set $S$ under the geometric-horizon model.

Consider now the set $S\cup\{e\}$.
Define an auxiliary execution order $\sigma$ for $S\cup\{e\}$ that:
(i) executes all calls in $S$ in the order $\ordscore$, and
(ii) places $e$ \emph{last} (after all calls in $S$).

Under $\sigma$, adding $e$ cannot delay or affect any call in $S$.
Thus its realized marginal contribution under $\sigma$ is
\[
   r_e\,\mathbf{1}\{v_{i_e}\ge r_e\}\,
   \mathbf{1}\{\text{$e$ is reached before the horizon $n$}\}
   \;\ge\;0
\]
for every realization $(v,n)$.
Hence
\[
   \RevV{n}{\Plan(\sigma,S\cup\{e\})}{v}
   \;\ge\;
   \RevV{n}{\Plan(\ordscore,S)}{v}
\]
for every realization $(v,n)$, and therefore
\[
   \RevUnk{\Plan(\sigma,S\cup\{e\})}
   \;\ge\;
   \RevUnk{\Plan(\ordscore,S)}.
\]

Finally, $\ordscore$ is the optimal execution order for
$S\cup\{e\}$, so
\[
   \RevUnk{\Plan(\ordscore,S\cup\{e\})}
   \;\ge\;
   \RevUnk{\Plan(\sigma,S\cup\{e\})}
   \;\ge\;
   \RevUnk{\Plan(\ordscore,S)}.
\]
Thus $\RevUnk{\Plan(\ordscore,\cdot)}$ is monotone.
\end{proof}

\begin{proof}[Proof of Lemma \ref{lem:geom-submod}]
Let $A\subseteq B$ and $e\notin B$, and let $e'$ be the
(unique) element of $B\setminus A$. As is standard, it suffices to prove the inequality in this special case.

For any subset $S\subseteq\mathcal{C}$ and any $x\in S$, let
\[
   \Pred{\ordscore}{x}{S}
   := \{\,y\in S : y \ordscore x\,\},
   \qquad
   \Succ{\ordscore}{x}{S}
   := \{\,y\in S : x \ordscore y\,\}.
\]

Define the random marginal contribution of adding $e$ to $S$ as
\[
   \Delta_e(S)
   :=
   \RevV{N}{\Plan(S\cup\{e\})}{V}
   -
   \RevV{N}{\Plan(S)}{V}.
\]
Then
\[
   \RevUnk{\Plan(S\cup\{e\})} - \RevUnk{\Plan(S)}
   =
   \mathbb{E}\!\left[\Delta_e(S)\right].
\]

There are two cases depending on the relative position of $e'$ and $e$ in the
order induced by $\ordscore$ on $B$.

\medskip
\noindent\textbf{Case 1: $e'$ precedes $e$ in $\ordscore$.}

Then
\[
   \Pred{\ordscore}{e}{B}
   =
   \Pred{\ordscore}{e}{A} \cup \{e'\},
   \qquad
   \Succ{\ordscore}{e}{B}
   =
   \Succ{\ordscore}{e}{A}.
\]
We condition on whether $e'$ succeeds or fails.

\medskip
\noindent\emph{If $V_{i_{e'}}\ge r_{e'}$:}

Then $e'$ accepts when reached.  In particular, whenever execution would otherwise reach $e$,
it instead terminates at $e'$, and hence $e$ is never reached under $B$.  Thus
$\Delta_e(B)=0$.  Moreover, since valuations are independent across agents and
$e'\notin A\cup\{e\}$, the random variable $\Delta_e(A)$ is independent of the
event $\{V_{i_{e'}}\ge r_{e'}\}$.  Therefore,
\[
\mathbb{E}\!\left[\Delta_e(A)\mid V_{i_{e'}}\ge r_{e'}\right]
=
\mathbb{E}\!\left[\Delta_e(A)\right]
=
\RevUnk{\Plan(A\cup\{e\})}-\RevUnk{\Plan(A)}
\;\ge\;0,
\]
where the last inequality follows from monotonicity of $\RevUnk{\Plan(\cdot)}$.

\medskip
\noindent\emph{If $V_{i_{e'}} < r_{e'}$:}

Then $e'$ fails when reached.  Relative to $A$, the set $B=A\cup\{e'\}$ inserts one additional
failed predecessor before $e$.  Under the geometric horizon, this one-step delay
multiplies the probability of reaching $e$ by $\rho$, and hence
\[
\mathbb{E}\!\left[\Delta_e(B)\mid V_{i_{e'}}<r_{e'}\right]
=
\rho\cdot
\mathbb{E}\!\left[\Delta_e(A)\mid V_{i_{e'}}<r_{e'}\right]
\;\le\;
\mathbb{E}\!\left[\Delta_e(A)\mid V_{i_{e'}}<r_{e'}\right],
\]
where the inequality uses $\rho\in(0,1)$.

\medskip
\noindent\textbf{Case 2: $e'$ succeeds $e$ in $\ordscore$.}

In this case, the relative order of all calls before $e$ is identical under $A$
and $B$.  Let
\[
P := \Pred{\ordscore}{e}{A}=\Pred{\ordscore}{e}{B}.
\]
Thus, whether execution reaches $e$ depends only on the calls in $P$ and is the
same for both sets.

We now condition on whether $e$ accepts or fails.

\medskip
\noindent\emph{If $V_{i_e}\ge r_e$:}

Whenever execution reaches $e$, the call accepts and execution terminates.
Thus, inserting $e$ replaces whatever continuation revenue would
have been earned from calls after $e$ by the fixed reward $r_e$.

Let $\alpha(P)$ denote the probability that execution reaches $e$.  Then, for any
$S\in\{A,B\}$,
\[
\mathbb{E}\!\left[\Delta_e(S)\mid V_{i_e}\ge r_e\right]
=
\alpha(P)\,r_e
-
\rho\cdot
\RevV{N}{\Plan(\Succ{\ordscore}{e}{S})}{V}\mid P.
\]

Since $\Succ{\ordscore}{e}{A}\subseteq \Succ{\ordscore}{e}{B}$, omitting $e$ yields
weakly larger continuation revenue under $B$.  Therefore,
\[
\mathbb{E}\!\left[\Delta_e(B)\mid V_{i_e}\ge r_e\right]
\;\le\;
\mathbb{E}\!\left[\Delta_e(A)\mid V_{i_e}\ge r_e\right].
\]

\medskip
\noindent\emph{If $V_{i_e}< r_e$:}

Whenever execution reaches $e$, the call fails and execution proceeds to the
successor region.  Relative to omitting $e$, inserting $e$ introduces one
additional failed step before reaching the successor calls, which under the
geometric horizon multiplies the continuation value by $\rho$.  Thus, for any
$S\in\{A,B\}$,
\[
\mathbb{E}\!\left[\Delta_e(S)\mid V_{i_e}< r_e\right]
=
(\rho-1)\,
\RevV{N}{\Plan(\Succ{\ordscore}{e}{S})}{V}\mid P.
\]

Since $\Succ{\ordscore}{e}{A}\subseteq \Succ{\ordscore}{e}{B}$ and $\rho-1<0$, it
follows that
\[
\mathbb{E}\!\left[\Delta_e(B)\mid V_{i_e}< r_e\right]
\;\le\;
\mathbb{E}\!\left[\Delta_e(A)\mid V_{i_e}< r_e\right].
\]
In all cases above we have shown that
\[
\mathbb{E}\!\left[\Delta_e(B)\mid \mathcal{E}\right]
\;\le\;
\mathbb{E}\!\left[\Delta_e(A)\mid \mathcal{E}\right]
\]
for the relevant conditioning event $\mathcal{E}$.  Taking expectations over
$\mathcal{E}$ gives $\mathbb{E}[\Delta_e(B)]\le \mathbb{E}[\Delta_e(A)]$, i.e.,
\[
\RevUnk{\Plan(A\cup\{e\})}-\RevUnk{\Plan(A)}
\;\ge\;
\RevUnk{\Plan(B\cup\{e\})}-\RevUnk{\Plan(B)},
\]
as claimed.
\end{proof}

\begin{proof}[Proof of Theorem \ref{thm:geom-reduction}]
Monotonicity follows from Lemma~\ref{lem:geom-monotone}, and submodularity
from Lemma~\ref{lem:geom-submod}.
The single-call feasibility constraint is a partition matroid.
\end{proof}

\section{Appendix - Section \ref{sec:IFR}}
\label{sec:app-ifr}
This appendix contains examples, proof sketches and formal proofs for Section \ref{sec:IFR}.

\refstepcounter{subsection}
\subsection*{Examples}
\label{sec:app-ifr-examples}

\begin{example}[Reverse-time submodular order fails under correlated values]
\label{ex:submodular-order-failure}
Consider three calls $e_1,e_2,e_3$, one available at each time $t=1,2,3$, and a
deterministic horizon ($q_t\equiv 1$). We execute any chosen set in reverse-time
order: time~1 first, then time~2, then time~3. Note that the choice of the calls is such that no pruning is required here, $\RevUnk{\cdot}$ is simply the expected
revenue of this fixed execution rule.

The calls have posted prices
\[
r_{e_1}=70,\qquad r_{e_2}=30,\qquad r_{e_3}=50,
\]
and marginal acceptance probabilities
\[
p_{e_1}=0.25,\qquad p_{e_2}=0.8,\qquad p_{e_3}=0.5.
\]
Assume the acceptance event of $e_3$ is correlated with earlier outcomes so that
\[
\Pr[e_3 \text{ is accepted} \mid e_1 \text{ is rejected}]=0.33,
\qquad
\Pr[e_3 \text{ is accepted} \mid e_1 \text{ is rejected},\, e_2 \text{ is rejected}]=0.33.
\]

\smallskip
\noindent
We compute:
\[
\RevUnk{\{e_3\}} = 50\cdot 0.5 = 25,
\]
\[
\RevUnk{\{e_2,e_3\}}
=30\cdot 0.8 + (1-0.8)\cdot 50\cdot 0.5
=29.
\]
Now add the time-1 call to the smaller set:
\[
\RevUnk{\{e_1,e_3\}}
=70\cdot 0.25 + 0.75\cdot 50\cdot 0.33
=29.875,
\]
so
\[
\RevUnk{\{e_1,e_3\}}-\RevUnk{\{e_3\}}=4.875.
\]
Add the time-1 call to the larger set:
\[
\RevUnk{\{e_1,e_2,e_3\}}
=70\cdot 0.25
+0.75\Bigl(30\cdot 0.8 + 0.2\cdot 50\cdot 0.33\Bigr)
=37.975,
\]
so
\[
\RevUnk{\{e_1,e_2,e_3\}}-\RevUnk{\{e_2,e_3\}}=8.975.
\]
Let $S=\{e_3\}\subset T=\{e_2,e_3\}$ and let $e=e_1$ be the earlier call. Then
\[
\RevUnk{(S\cup\{e\})}-\RevUnk{(S)}
\;<\;
\RevUnk{(T\cup\{e\})}-\RevUnk{(T)},
\]
so the reverse-time submodular-order inequality fails under correlations.

In the independent case, conditioning on earlier rejections does not change the
acceptance probability of any later call. As a result, when we compare the
marginal effect of inserting an earlier call $e$ into two sets $S\subseteq T$,
the only difference between the two marginals comes from the continuation term:
the larger suffix $T$ offers a (weakly) larger continuation value than $S$,
which is exactly what makes the reverse-time submodular-order comparison go
through.

Under correlations, this reasoning breaks. Even though the larger suffix still
offers a larger continuation value in the usual sense, the value of that
continuation is evaluated under a different conditional distribution that
depends on which earlier calls were rejected. In
Example~\ref{ex:submodular-order-failure}, conditioning on an early rejection
strictly reduces the acceptance probability of the time-3 call, so inserting
$e_1$ changes not only the chance of reaching later times but also the
conditional performance of the suffix upon reaching it. This dependence on the
rejection history can make the marginal contribution of $e_1$ larger for the
superset $T$, violating reverse-time submodular order.
\end{example}

\begin{example}[Unbounded gap of relaxation under correlated values]
\label{ex:milp-misleading}
Let $\varepsilon\in(0,1)$ and let $m:=1/\varepsilon$ (assume $m$ is an integer).
Assume a deterministic horizon ($q_t\equiv 1$). Consider the relaxation
\eqref{eq:time-milp}.

There are $m$ \emph{small} calls $e_1,\dots,e_m$ and one \emph{special} call $e_0$.
Set
\[
p_{e_j}=\varepsilon,\qquad r_{e_j}=\frac{1}{\varepsilon}
\quad\text{for } j=1,\dots,m,
\qquad\text{and}\qquad
p_{e_0}=1,\quad r_{e_0}=\sqrt{\frac{1}{\varepsilon}}=\frac{1}{\sqrt{\varepsilon}}.
\]
Let $Z\sim\mathrm{Bernoulli}(\varepsilon)$ and define correlated successes by
\[
X_{e_j}:=Z\ \ \text{for all } j=1,\dots,m,
\qquad\text{and}\qquad
X_{e_0}\equiv 1.
\]
Thus either all small calls succeed (when $Z=1$) or all fail (when $Z=0$), while
the special call always succeeds.

\smallskip
\noindent
\textbf{Relaxation value:}
The relaxation can set $x_{e_j}=1$ and $y_{e_j}=\varepsilon$ for all $j$ and $y_{e_0}=0$.
This is feasible since $\sum_{j=1}^m y_{e_j}=m\varepsilon=1$, and achieves
\[
\sum_{j=1}^m r_{e_j}y_{e_j}
=
m\cdot\frac{1}{\varepsilon}\cdot \varepsilon
=
\frac{1}{\varepsilon}.
\]
Allocating the unit budget to the special call yields at most $r_{e_0}=1/\sqrt{\varepsilon}$,
so the relaxation optimum is at least $1/\varepsilon$.

\smallskip
\noindent
\textbf{True optimal revenue:}
Executing only the special call yields expected revenue $\RevUnk{(\{e_0\})}=1/\sqrt{\varepsilon}$,
so $\mathrm{OPT}\ge 1/\sqrt{\varepsilon}$.
On the other hand, executing the small calls in any order yields a sale iff $Z=1$,
and then the first attempted small call succeeds and pays $1/\varepsilon$, hence
\[
\RevUnk{(e_1,\dots,e_m)}=\varepsilon\cdot\frac{1}{\varepsilon}=1.
\]

\smallskip
\noindent
In particular, the relaxation value can exceed $\mathrm{OPT}$ by a factor $1/\sqrt{\varepsilon}$,
and the plan suggested by the relaxation's optimum (spending its entire probability
budget on the small calls) achieves expected revenue $1$, which is a factor
$1/\sqrt{\varepsilon}$ smaller than $\RevUnk{(e_0)}=1/\sqrt{\varepsilon}$.

\smallskip
\noindent
The relaxation only enforces marginal bounds $y_e\le p_e x_e$ and the single budget
$\sum_e y_e\le 1$, so it can spread probability mass across many calls. Under correlation,
the $m$ small calls share the same success event $Z$, so together they behave like a
single call with success probability $\varepsilon$, which the relaxation fails to recognize.
\end{example}

\begin{example}[No fixed quantile yields a representative horizon]
\label{ex:no-fixed-quantile}
Consider $\alpha\in(0,1)$.  Let the random horizon be
\[
N=\begin{cases}
1 & \text{w.p. }\alpha,\\
H & \text{w.p. }1-\alpha,
\end{cases}
\]
for a large $H$.  Then $q_1=1$ and $q_t=\Pr[N\ge t]=1-\alpha$ for all
$t=2,\ldots,H$, i.e., there is one sure offer opportunity followed by many
opportunities that occur only with probability $\varepsilon:=1-\alpha$.

Consider $m$ buyers, with i.i.d.\ values
$V_i\in\{0,1\}$ where $\Pr[V_i=1]=p$.  Posting price $1$ to any buyer yields
expected revenue $p$ from a single call.  Since the $\alpha$-quantile of $N$
equals $t_\alpha=1$, any policy optimized for the deterministic horizon
$t_\alpha$ makes at most one offer and hence achieves at most $ALG\le p$.

On the other hand, the policy that sequentially offers price $1$ to distinct
buyers for as many steps as available earns conditional revenue
$1-(1-p)^m$ when $N=H$, and thus
\[
OPT \;\ge\; (1-\alpha)\bigl(1-(1-p)^m\bigr).
\]
Choosing $m=\lceil 1/p\rceil$ gives $1-(1-p)^m\ge 1-e^{-1}$, so
$OPT\ge (1-\alpha)(1-e^{-1})$.  Therefore,
\[
\frac{ALG}{OPT}\;\le\;\frac{p}{(1-\alpha)(1-e^{-1})}\xrightarrow[p\to 0]{}0.
\]
Hence selecting a deterministic horizon via any fixed quantile (e.g., the
median) cannot guarantee a constant-factor approximation for arbitrary horizon
distributions.
\end{example}

\begin{example}[Tightness of best deterministic greedy]
\label{ex:best-greedy-tight}
We construct a family of instances on which the best deterministic greedy
solution obtains only an $O(1/\log M)$ fraction of the optimal unknown-horizon
revenue.

Let $M=T$, and let there be one potential offer for each buyer.  For each
$i\in[T]$, define an offer $c_i$ to buyer $i$ at price
\[
    p_i := \Lambda i,
\]
where $\Lambda>0$ is a constant chosen large enough so that
\[
    \sum_{i=1}^{\infty} \frac{1}{\Lambda i^{3/2}} \le 1.
\]
For example, any sufficiently large constant $\Lambda$ works.  The buyers'
valuations are correlated as follows.  There is a null state, in which no buyer
accepts, and for each $i\in[T]$ there is a state $s_i$ with probability
\[
    \Pr[s_i] := \frac{1}{\Lambda i^{3/2}}.
\]
In state $s_i$, buyer $i$ has value $p_i$, while every other buyer has value
zero.  Thus, in every non-null state, exactly one buyer accepts their
corresponding offer.

The expected revenue of offer $c_i$ is therefore
\[
    w_i
    :=
    p_i \Pr[s_i]
    =
    \Lambda i \cdot \frac{1}{\Lambda i^{3/2}}
    =
    \frac{1}{\sqrt{i}}.
\]
Hence
\[
    w_1 \ge w_2 \ge \cdots \ge w_T,
\]
while the prices satisfy
\[
    p_1 < p_2 < \cdots < p_T.
\]
Let the horizon survival probabilities be
\[
    q_t := \frac{1}{\sqrt{t}},
    \qquad t=1,\ldots,T.
\]

Since at most one buyer accepts in any state, the unknown-horizon revenue of a
sequence $\sigma$ is simply
\[
    \RevUnk{\sigma}
    =
    \sum_{t} q_t w_{\sigma_t}.
\]
The optimal policy therefore orders offers by decreasing $w_i$, namely
$1,2,\ldots,T$.  Since both $(q_t)$ and $(w_i)$ are decreasing, the
rearrangement inequality gives
\[
    OPT
    =
    \sum_{i=1}^{T} q_i w_i
    =
    \sum_{i=1}^{T} \frac{1}{i}
    =
    H_T
    =
    \Theta(\log T).
\]

Now consider the deterministic greedy solution for a fixed horizon $h$.  The
greedy routine selects the $h$ offers with largest expected revenues, namely
$c_1,\ldots,c_h$.  However, because prices are increasing in the index, its
decreasing-price order is
\[
    c_h,c_{h-1},\ldots,c_1.
\]
Thus, when this horizon-$h$ greedy solution is evaluated under the unknown
horizon, its revenue is
\[
    G(h)
    =
    \sum_{i=1}^{h} q_i w_{h+1-i}
    =
    \sum_{i=1}^{h}
    \frac{1}{\sqrt{i(h+1-i)}}.
\]
We now show that $G(h)=O(1)$ uniformly over all $h$.  By symmetry,
\[
    G(h)
    \le
    2 \sum_{i=1}^{\lceil h/2\rceil}
    \frac{1}{\sqrt{i(h+1-i)}}.
\]
For every $i\le \lceil h/2\rceil$, we have
$h+1-i \ge (h+1)/2$, and hence
\[
    \frac{1}{\sqrt{i(h+1-i)}}
    \le
    \sqrt{\frac{2}{h+1}} \cdot \frac{1}{\sqrt{i}}.
\]
Therefore,
\[
    G(h)
    \le
    2\sqrt{\frac{2}{h+1}}
    \sum_{i=1}^{\lceil h/2\rceil} \frac{1}{\sqrt{i}}
    \le 4,
\]
where the last inequality uses
$\sum_{i=1}^{m} i^{-1/2}\le 2\sqrt{m}$.

Thus every deterministic greedy candidate has unknown-horizon revenue at most a
constant:
\[
    \max_{h\le T} G(h) = O(1).
\]
On the other hand, $OPT=\Theta(\log T)$.  Hence
\[
    \frac{\RevUnk{\pi^{\mathrm{BG}}}}{OPT}
    =
    \frac{\max_{h\le T}G(h)}{OPT}
    =
    O\!\left(\frac{1}{\log T}\right)
    =
    O\!\left(\frac{1}{\log M}\right).
\]
Together with the guarantee in Theorem~\ref{thm:best-greedy-finite}, this shows
that the approximation ratio of
Algorithm~\ref{alg:best-greedy-horizon} is $\Theta(1/\log M)$.
\end{example}

\refstepcounter{subsection}
\subsection*{Formal Proofs}
\label{sec:app-ifr-proofs}

To relate the random-horizon objective to deterministic benchmarks, we define the
optimal deterministic revenue curve
\[
  \OPT_n
  := \max_{\substack{S\subseteq\Calls:\\ |S|\le n,\\
  \forall i\in I:\ |\{e\in S:\ i_e=i\}|\le 1}}
  \RevN{n}{\pi^{\mathrm{d}}(S)} ,
\]
which captures the maximum achievable revenue when exactly \(n\) calls can be
attempted.

\begin{lemma}[Scaling of the deterministic revenue curve]
\label{lem:g-scaling}
The function \(\OPT_n\) is nondecreasing, and for every \(m\in\mathbb{N}\) and every
integer \(k\ge 1\),
$
  \OPT_{km} \;\le\; k\,\OPT_m.
$
\end{lemma}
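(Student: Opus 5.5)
The plan is to work with the set function $f(S):=\RevN{\infty}{\pi^{\mathrm{d}}(S)}=\mathbb{E}_V\bigl[\max_{e\in S} r_e\mathbf 1\{V_{i_e}\ge r_e\}\bigr]$. By Lemma~\ref{lem:dec-reachable-submod}, $f$ is monotone and submodular, since it is an expectation of pointwise monotone submodular functions. As in the proof of Theorem~\ref{thm:known-horizon-reduction}, $\RevN{n}{\pi^{\mathrm{d}}(S)}=f(S)$ whenever $|S|\le n$. Hence
\[
\OPT_n=\max\{f(S): S\in\mathcal F,\ |S|\le n\},
\]
where $\mathcal F$ denotes the family of sets with at most one call per agent. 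Monotonicity of $\OPT_n$ in $n$ is immediate, because the feasible family only grows with $n$.

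For the scaling bound, fix an optimal set $S^\star$ for $\OPT_{km}$, so $|S^\star|\le km$ and $S^\star\in\mathcal F$. Partition $S^\star$ arbitrarily into $k$ pieces $S_1,\dots,S_k$, each of size at most $m$. Every subset of a set in $\mathcal F$ is again in $\mathcal F$, so each $S_j$ is feasible for the horizon-$m$ problem, and therefore $f(S_j)\le \OPT_m$.

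It remains to show that $f$ is subadditive: $f(S_1\cup\dots\cup S_k)\le\sum_j f(S_j)$. This follows pointwise from the max characterization. The inequality $\max_{e\in\cup_j S_j} g(e)\le\sum_j\max_{e\in S_j} g(e)$ holds for any nonnegative $g$, and taking expectations over $V$ preserves it. Alternatively, subadditivity follows from submodularity together with $f(\emptyset)=0$. Combining the steps gives
\[
\OPT_{km}=f(S^\star)\le\sum_j f(S_j)\le k\,\OPT_m.
\]

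There is no substantial obstacle. The one point that needs care is that the horizon truncation does not bite, which holds because each $|S_j|\le m$. This is exactly the identity $\RevN{m}{\pi^{\mathrm{d}}(S_j)}=f(S_j)$ established in the proof of Theorem~\ref{thm:known-horizon-reduction}.
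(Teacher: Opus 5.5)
Your proposal is correct and takes essentially the same approach as the paper. You partition an optimal set for $\OPT_{km}$ into $k$ pieces of size at most $m$, bound the value of the union by the sum of the pieces' values, and note that the horizon-$m$ truncation does not bind on any piece. The only cosmetic difference is that you get subadditivity pointwise from the max characterization, whereas the paper telescopes over prefix unions and uses submodularity to get $F(S_j\mid U_{j-1})\le F(S_j)$; you also mention that route as an alternative.
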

\begin{proof}[Proof of Lemma \ref{lem:g-scaling}]
Monotonicity is immediate since enlarging the budget enlarges the feasible
family.

Consider some \(m\) and \(k\), and let \(S^\star\) be an optimizer for \(\OPT_{km}\). Partition
\(S^\star\) into \(k\) (possibly empty) subsets \(S_1,\ldots,S_k\) with
\(|S_j|\le m\) for all \(j\). Each \(S_j\) satisfies the one-call-per-agent
constraint because \(S^\star\) does.

Let \(U_j:=\bigcup_{\ell=1}^j S_\ell\) with \(U_0=\emptyset\), and let
\(F(U):=\RevN{km}{\pi^{\mathrm{d}}(U)}\). Since \(F\) is monotone and submodular, we have the inequality
\(F(S_j\mid U_{j-1})\le F(S_j)\). Therefore,
\[
  \OPT_{km}
  \;=\;
  F(S^\star)
  \;=\;
  \sum_{j=1}^k \bigl(F(U_j)-F(U_{j-1})\bigr)
  \;\le\;
  \sum_{j=1}^k F(S_j).
\]
Finally, \(|S_j|\le m\) implies \(F(S_j)=\RevN{km}{\pi^{\mathrm{d}}(S_j)}
=\RevN{m}{\pi^{\mathrm{d}}(S_j)}\le \OPT_m\). Hence \(\OPT_{km}\le k\,\OPT_m\).
\end{proof}

\begin{lemma}[IFR tail bound]
\label{lem:ifr-blocks}
Let $N$ be a nonnegative integer-valued random variable with an increasing
failure rate, and let $m$ be a (left) median of $N$, i.e.,
$\Pr[N\ge m]\ge\tfrac{1}{2}$ and $\Pr[N\ge m{+}1]\le\tfrac{1}{2}$.
Let $q(t):=\Pr[N\ge t]$ be its survival function. Then for every integer $k\ge1$,
\[
  q(km) \;\le\; 2^{-k}, \qquad
  \mathbb{E}\bigl[\lceil N/m\rceil\bigr] \;\le\; 2.
\]
\end{lemma}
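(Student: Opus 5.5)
The approach is to use the discrete IFR property in its multiplicative form. Write $h(t):=\Pr[N=t\mid N\ge t]$ for the hazard rate, so that $q(t+1)=q(t)\bigl(1-h(t)\bigr)$ and IFR means $h$ is nondecreasing. As in the rest of Section~\ref{sec:IFR}, I normalize $q(1)=1$. Then for any $a\ge 1$ and $s\ge 0$,
\[
\frac{q(a+s)}{q(a)}=\prod_{j=a}^{a+s-1}\bigl(1-h(j)\bigr)\;\le\;\prod_{j=1}^{s}\bigl(1-h(j)\bigr)=\frac{q(s+1)}{q(1)}=q(s+1),
\]
where the inequality compares factors term by term, $j$ against $j-a+1$, using that $h$ is nondecreasing. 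This is the only place IFR enters.

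\emph{Step 1 (block decay).} Apply the inequality with $a=(\ell-1)m+1$ and $s=m$ to get
\[
q(\ell m+1)\le q\bigl((\ell-1)m+1\bigr)\,q(m+1)\le \tfrac12\,q\bigl((\ell-1)m+1\bigr),
\]
using the median property $q(m+1)\le \tfrac12$. Inducting on $\ell$ from $q(1)=1$ gives $q(km+1)\le 2^{-k}$ for all $k\ge 0$. Since $q$ is nonincreasing, this also gives $q(km)\le q((k-1)m+1)\le 2^{-(k-1)}$ for $k\ge 1$.

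\emph{Step 2 (expected number of blocks).} For a nonnegative integer $N$, $\lceil N/m\rceil>k$ if and only if $N>km$, that is, $N\ge km+1$. Hence
\[
\mathbb E\bigl[\lceil N/m\rceil\bigr]=\sum_{k\ge 0}\Pr\bigl[\lceil N/m\rceil>k\bigr]=\sum_{k\ge0}q(km+1)\le\sum_{k\ge0}2^{-k}=2,
\]
which is the second claim.

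\emph{Expected obstacle: the exact form of the first inequality.} The statement asserts $q(km)\le 2^{-k}$, and Step 1 only yields this up to a shift of one index. At $k=1$ the stated bound reads $q(m)\le\tfrac12$, which conflicts with $q(m)\ge\tfrac12$ unless equality holds. A deterministic horizon $N\equiv m$ is IFR and has $q(m)=1$, so the inequality as written cannot hold in general. I would therefore prove, and use downstream, the shifted form $q(km+1)\le 2^{-k}$, equivalently $\Pr[N>km]\le 2^{-k}$. This is exactly what Step 2 and the block-counting argument for Theorem~\ref{thm:ifr-median-greedy} need. The only care required is to track the off-by-one between "$N\ge km$" and "$N>km$" consistently there.
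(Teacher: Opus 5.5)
Your argument is correct. It follows the same outline as the paper's proof: IFR gives a multiplicative bound on the survival function, you induct over blocks of length $m$, and you finish with the tail-sum formula. The difference is your index shift, and that shift is exactly what the paper's version is missing.

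The paper derives $q(t+s)\le q(t)\,q(s)$ and then asserts that ``since $m$ is a median, $q(m)\le\tfrac12$.'' This is backwards. The median definition gives $q(m)\ge\tfrac12$, and only $q(m+1)\le\tfrac12$. Your example $N\equiv m$ is IFR with $q(m)=1$, so it correctly refutes the first displayed bound at $k=1$. The paper's proof of the second bound passes through $q((r-1)m+1)\le q((r-1)m)\le 2^{-(r-1)}$, so it inherits the same error.

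Anchoring the hazard comparison at index $1$ gives $q(a+s)\le q(a)\,q(s+1)$. This is the form that pairs with the true median inequality $q(m+1)\le\tfrac12$: it yields $q(km+1)\le 2^{-k}$, and hence $\mathbb{E}[\lceil N/m\rceil]=\sum_{k\ge0}q(km+1)\le 2$ with the constant intact. With the paper's anchoring at $0$, the correct median inequality only gives $q(k(m+1))\le 2^{-k}$, i.e.\ blocks of length $m+1$. The proof of Theorem~\ref{thm:ifr-median-greedy} uses only the expectation bound, so your corrected lemma is enough downstream.

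One point to make explicit: $q(1)=1$ is a needed hypothesis, not a cosmetic normalization. For a nonnegative $N$ with $\Pr[N=0]>0$, the second claim can fail. Take $q(t)=0.7^{t}$ for $t\ge0$, i.e.\ geometric on $\{0,1,\dots\}$. Its hazard is constant at $0.3$, so it is IFR. Its unique left median is $m=1$, since $q(1)=0.7\ge\tfrac12$ and $q(2)=0.49\le\tfrac12$. Yet $\mathbb{E}[\lceil N/1\rceil]=\mathbb{E}[N]=7/3>2$.

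So state $N\ge 1$ almost surely as an assumption of the lemma. This is natural here because $N$ counts offers, and it also guarantees $m\ge1$ so that $N/m$ is defined. Do not present it as a WLOG normalization for the IFR theorem: conditioning on $N\ge1$ can change the median at which the algorithm is run.
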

\begin{proof}[Proof of Lemma \ref{lem:ifr-blocks}]
IFR implies that the discrete hazard
$h(t):=\Pr[N=t\mid N\ge t]=\frac{q(t)-q(t+1)}{q(t)}$ is nondecreasing, which
is equivalent to:
\[
  q(t+s) \;\le\; q(t)\,q(s)
  \qquad\forall\,t,s\ge0.
\]
Since $m$ is a median and $q$ is nonincreasing, we have $q(m)\le\tfrac{1}{2}$.
By induction,
\[
  q(km)
  \;\le\;
  q((k{-}1)m)\,q(m)
  \;\le\;
  q(m)^k
  \;\le\;
  2^{-k},
  \qquad k\ge1.
\]

For the second claim, we use the tail-sum representation for integer-valued
nonnegative random variables:
\[
  \mathbb{E}\bigl[\lceil N/m\rceil\bigr]
  = \sum_{r=1}^{\infty} \Pr\bigl[\lceil N/m\rceil \ge r\bigr]
  = \sum_{r=1}^{\infty} \Pr\bigl[N \ge (r{-}1)m + 1\bigr].
\]
For each $r\ge1$,
\[
  \Pr\bigl[N \ge (r{-}1)m + 1\bigr]
  \;\le\;
  \Pr[N\ge (r{-}1)m]
  = q((r{-}1)m)
  \;\le\;
  2^{-(r-1)}.
\]
Thus
\[
  \mathbb{E}\bigl[\lceil N/m\rceil\bigr]
  \;\le\;
  \sum_{r=1}^{\infty} 2^{-(r-1)}
  = 2.
\]
\end{proof}

\begin{proof}[Proof of Theorem \ref{thm:ifr-median-greedy}]
By Corollary~\ref{cor:known-horizon},
\begin{equation}
  \RevN{m}{\Plan^{\mathrm{med}}}
  \;\ge\;
  \Bigl(1-\tfrac1e\Bigr)\,\OPT_m.
  \label{eq:med-det-approx}
\end{equation}

For any realization \(N=n\), executing \(\Plan^{\mathrm{med}}\) under horizon \(n\)
is equivalent to executing it under horizon \(\min\{n,m\}\), since
\(\Plan^{\mathrm{med}}\) contains at most \(m\) calls. Hence
\[
  \RevUnk{\Plan^{\mathrm{med}}}
  \;=\;
  \mathbb{E}_N\!\bigl[\RevN{\min\{N,m\}}{\Plan^{\mathrm{med}}}\bigr]
  \;\ge\;
  \RevN{m}{\Plan^{\mathrm{med}}}\cdot \Pr[N\ge m]
  \;\ge\;
  \tfrac12\,\RevN{m}{\Plan^{\mathrm{med}}},
\]
where the last inequality uses that \(m\) is a median. Combining with
\eqref{eq:med-det-approx} gives
\begin{equation}
  \RevUnk{\Plan^{\mathrm{med}}}
  \;\ge\;
  \tfrac12\Bigl(1-\tfrac1e\Bigr)\,\OPT_m.
  \label{eq:med-lower}
\end{equation}

Next, we upper bound \(OPT^{\mathrm{IFR}}\) in terms of \(\OPT_m\).
For any plan \(\Plan\) and any realization \(N=n\),
\(\RevN{n}{\Plan}\le \OPT_n\), so \(OPT^{\mathrm{IFR}}\le \mathbb{E}[\OPT_N]\).
By Lemma~\ref{lem:g-scaling}, for every \(n\),
\[
  \OPT_n
  \;\le\;
  \OPT_{\lceil n/m\rceil\,m}
  \;\le\;
  \lceil n/m\rceil\,\OPT_m.
\]
Taking expectations and applying Lemma~\ref{lem:ifr-blocks},
\[
  OPT^{\mathrm{IFR}}
  \;\le\;
  \mathbb{E}[\OPT_N]
  \;\le\;
  \mathbb{E}\bigl[\lceil N/m\rceil\bigr]\,\OPT_m
  \;\le\;
  2\,\OPT_m.
\]
Substituting this into \eqref{eq:med-lower} yields
\[
  \RevUnk{\Plan^{\mathrm{med}}}
  \;\ge\;
  \tfrac12\Bigl(1-\tfrac1e\Bigr)\,\OPT_m
  \;\ge\;
  \tfrac14\Bigl(1-\tfrac1e\Bigr)\,OPT^{\mathrm{IFR}}.
\]
\end{proof}

\begin{proof}[Proof of Lemma \ref{lem:nonignorable-count}]
Consider \(k\in\mathcal{K}_{\mathrm{heavy}}\). Then \(V_k(\pi^\star)\ge (1-\varepsilon)^k V_0^\star\).

Let \(\pi_0^\star\) be a maximizer attaining \(V_0^\star\). Since \(q_t>1/2\) for
all \(t\in B_0\), we have
\[
  V_0^\star
  \;=\;
  \sum_{t\in B_0} q_t\Big(\RevN{t}{\pi_0^\star}-\RevN{t-1}{\pi_0^\star}\Big)
  \;\ge\;
  \frac12 \sum_{t\in B_0}\Big(\RevN{t}{\pi_0^\star}-\RevN{t-1}{\pi_0^\star}\Big)
  \;\ge\;
  \frac12\,\RevN{n_0}{\pi_0^\star}.
\]
Hence
\begin{equation}
\label{eq:v0star-lower-det}
  \RevN{n_0}{\pi_0^\star} \;\le\; 2V_0^\star.
\end{equation}

On the other hand, for bucket \(k\), we have \(q_t\le 2^{-k}\) for every \(t\in B_k\),
and therefore
\[
  V_k(\pi^\star)
  \;=\;
  \sum_{t\in B_k} q_t\Big(\RevN{t}{\pi^\star}-\RevN{t-1}{\pi^\star}\Big)
  \;\le\;
  2^{-k}\,\RevN{n_k}{\pi^\star}.
\]
Combining with \(V_k(\pi^\star) \ge (1-\varepsilon)^k V_0^\star\) and \eqref{eq:v0star-lower-det} gives
\[
  2^{-k}\,\RevN{n_k}{\pi^\star}
  \;\ge\;
  V_k(\pi^\star)
  \;\ge\;
  (1-\varepsilon)^k V_0^\star
  \;\ge\;
  (1-\varepsilon)^k \cdot \frac12\,\RevN{n_0}{\pi_0^\star}.
\]
Multiplying by \(2^k\) yields
\begin{equation}
\label{eq:det-growth}
  \RevN{n_k}{\pi^\star}
  \;\ge\;
  \frac12(2-2\varepsilon)^k\,\RevN{n_0}{\pi_0^\star}.
\end{equation}

Finally, because there are \(M\) agents and at most one call per agent, any plan
has length at most \(M\), so \(\RevN{n}{\pi}\le \RevN{M}{\pi}\) for all \(n\) and \(\pi\).
Taking the maximum over \(\pi\) and applying Lemma~\ref{lem:g-scaling} with \(m=1\)
gives
\[
  \RevN{n_k}{\pi^\star}
  \;\le\;
  \max_{\pi}\RevN{M}{\pi}
  \;\le\;
  M\cdot \max_{\pi}\RevN{1}{\pi}
  \;\le\;
  M\cdot \max_{\pi}\RevN{n_0}{\pi}
  \;\le\;
  M\cdot \RevN{n_0}{\pi_0^\star},
\]
where the last inequality uses that \(\pi_0^\star\) is feasible for horizon \(n_0\)
and achieves \(\RevN{n_0}{\pi_0^\star}\).

Combining this upper bound with \eqref{eq:det-growth} yields
\[
  \frac12(2-2\varepsilon)^k\,\RevN{n_0}{\pi_0^\star}
  \;\le\;
  \RevN{n_k}{\pi^\star}
  \;\le\;
  M\cdot \RevN{n_0}{\pi_0^\star},
\]
so \((2-2\varepsilon)^k \le 2M\), i.e.,
\(k \le \log_{2-2\varepsilon}(2M)\). This implies the stated
\(O(\log M)\) bound on \(|\mathcal{K}_{\mathrm{heavy}}|\).
\end{proof}

\begin{proof}[Proof of Lemma \ref{lem:bucket-det}]
Let a bucket \(B_k\)'s indices in increasing order be
\(B_k=\{t_{k,1}<\cdots<t_{k,n_k}\}\).
Recall that \(2^{-(k+1)} < q_t \le 2^{-k}\) for all \(t\in B_k\).
For any plan \(\pi\), the bucket contribution satisfies the bounds
\begin{equation}
\label{eq:bucket-sandwich-clean}
  2^{-(k+1)}\,\RevN{n_k}{\pi}
  \;\le\;
  V_k(\pi)
  \;\le\;
  2^{-k}\,\RevN{n_k}{\pi}.
\end{equation}

Taking the maximum over \(\pi\) in the upper bound of
\eqref{eq:bucket-sandwich-clean} gives
\begin{equation}
\label{eq:Vkmax-upper-clean}
  \max_{\pi} V_k(\pi)
  \;\le\;
  2^{-k}\cdot \max_{\pi}\RevN{n_k}{\pi}.
\end{equation}
Next, by the \((1-1/e)\) guarantee for \textsc{Greedy} on the deterministic-horizon
problem of length \(n_k\) (Corollary~\ref{cor:known-horizon}),
\begin{equation}
\label{eq:greedy-det-clean}
  \RevN{n_k}{\pi_k}
  \;\ge\;
  \Bigl(1-\frac1e\Bigr)\cdot \max_{\pi}\RevN{n_k}{\pi}.
\end{equation}
Applying the lower bound in \eqref{eq:bucket-sandwich-clean} to \(\pi_k\) and
then using \eqref{eq:greedy-det-clean} yields
\[
  V_k(\pi_k)
  \;\ge\;
  2^{-(k+1)}\,\RevN{n_k}{\pi_k}
  \;\ge\;
  2^{-(k+1)}\Bigl(1-\frac1e\Bigr)\max_{\pi}\RevN{n_k}{\pi}.
\]
Combining with \eqref{eq:Vkmax-upper-clean} gives
\[
  V_k(\pi_k)
  \;\ge\;
  \frac12\Bigl(1-\frac1e\Bigr)\cdot \max_{\pi}V_k(\pi),
\]
which proves the claim.
\end{proof}

\begin{proof}[Proof of Theorem \ref{thm:best-greedy-finite}]
Recall that \(\pi^\star\in\arg\max_{\pi}\RevUnk{\pi}\) is an optimal plan, so that
\(OPT=\RevUnk{\pi^\star}\).
Therefore,
\[
  OPT
  \;=\;
  \RevUnk{\pi^\star}
  \;=\;
  \sum_k V_k(\pi^\star)
  \;\le\;
  \sum_k \max_{\pi} V_k(\pi).
\]

We split the indices into light and heavy buckets, and define
\[
  V^\star \;:=\; \max_k \max_{\pi} V_k(\pi).
\]

For light buckets \(k\ge 1\), we have
\(\max_{\pi}V_k(\pi) < (1-\varepsilon)^k V_0^\star\), and hence
\[
  \sum_{k \notin \mathcal{K}_{\mathrm{heavy}}} \max_{\pi}V_k(\pi)
  \;\le\;
  \sum_{k\ge 1}(1-\varepsilon)^k\,V_0^\star
  \;=\;
  \frac{1-\varepsilon}{\varepsilon}\,V_0^\star
  \;\le\;
  \frac{1-\varepsilon}{\varepsilon}\,V^\star.
\]
For the heavy buckets,
\[
  \sum_{k\in\mathcal{K}_{\mathrm{heavy}}} \max_{\pi}V_k(\pi)
  \;\le\;
  |\mathcal{K}_{\mathrm{heavy}}|\,V^\star
  \;\le\;
  O(\log M)\cdot V^\star,
\]
where the last inequality is Lemma~\ref{lem:nonignorable-count}. Combining the
two yields
\[
  OPT
  \;\le\;
  \sum_k \max_{\pi}V_k(\pi)
  \;\le\;
  O(\log M)\cdot V^\star.
\]

Now let \(k^\star\in\arg\max_k V_k(\pi_k)\) be the highest value bucket. Since \(\RevUnk{\pi}=\sum_k V_k(\pi)\) for any plan \(\pi\),
we have
\[
  \RevUnk{\pi^{\mathrm{BB}}}
  \;\ge\;
  V_{k^\star}(\pi_{k^\star})
  \;=\;
  \max_k V_k(\pi_k).
\]
By Lemma~\ref{lem:bucket-det},
\[
  \max_k V_k(\pi_k)
  \;\ge\;
  \frac12\Bigl(1-\frac1e\Bigr)\max_k \max_{\pi}V_k(\pi)
  \;=\;
  \frac12\Bigl(1-\frac1e\Bigr)\,V^\star.
\]
Combining with \(OPT \le O(\log M)\cdot V^\star\) gives
\[
  \RevUnk{\pi^{\mathrm{BB}}}
  \;\ge\;
  \Omega\!\left(\frac{1}{\log M}\right)\cdot OPT,
\]
as claimed.
\end{proof}

\begin{algorithm}[t]
\caption{Best Greedy over Candidate Horizons}
\label{alg:best-greedy-horizon}
\DontPrintSemicolon
\SetKwInOut{Input}{Input}
\SetKwInOut{Output}{Output}

\Input{call set $\mathcal{C}$; survival probabilities $(q_t)_{t=1}^T$}
\Output{a plan $\pi^{\mathrm{BG}}$}

\BlankLine
\For{$h \gets 1$ \KwTo $T$}{
    $\pi_h \gets \textsc{Greedy}(\mathcal{C},h)$\;

    $\mathrm{VAL}_h \gets
    \RevUnk{\pi_h}
    =
    \sum_{t=1}^{T}
    q_t\bigl(\RevN{t}{\pi_h}-\RevN{t-1}{\pi_h}\bigr)$\;
}

\BlankLine
$h^\star \in \arg\max_{\{h \in [T] : q_h > 0\}} \mathrm{VAL}_h$\;

\BlankLine
\Return{$\pi^{\mathrm{BG}} \gets \pi_{h^\star}$}\;

\end{algorithm}

\end{document}